\documentclass[english]{jnsao}
\usepackage[utf8]{inputenc}
\usepackage{enumitem}
\usepackage{cleveref}
\usepackage{graphicx}
\usepackage{booktabs}
\usepackage{pgfplotstable}
\usepackage{xfp}

\usepackage{tikz}
\usepackage{pgfplots}
\pgfplotsset{compat=1.18}

\usepackage[textsize=footnotesize,color=DarkOrange!40]{todonotes}

\def\XXint#1#2#3{{\setbox0=\hbox{$#1{#2#3}{\int}$}
      \vcenter{\hbox{$#2#3$}}\kern-.5\wd0}}

\definecolor{ao}{rgb}{0.0, 0.5, 0.0}

\newcommand{\R}{\mathbb{R}}

\newcommand{\N}{\mathbb{N}}

\newcommand{\st}{ \ \middle| \ }
\newcommand{\abs}[1]{|#1|}
\newcommand{\norm}[1]{\|#1\|}

\AddToHook{env/proposition/begin}{\crefalias{theorem}{proposition}}
\AddToHook{env/lemma/begin}{\crefalias{theorem}{lemma}}
\AddToHook{env/corollary/begin}{\crefalias{theorem}{corollary}}
\AddToHook{env/definition/begin}{\crefalias{theorem}{definition}}
\AddToHook{env/assumption/begin}{\crefalias{theorem}{assumption}}
\AddToHook{env/remark/begin}{\crefalias{theorem}{remark}}
\AddToHook{env/algorithm/begin}{\crefalias{theorem}{algorithm}}

\title{Orlicz space relaxation of total variation denoising}

\author{Christian Clason%
    \thanks{Department of Mathematics and Scientific Computing, University of Graz \email{c.clason@uni-graz.at}}
    \and
    Tobias Unterberger%
    \thanks{Institute of Analysis and Scientific Computing, TU Wien, Wiedner Hauptstra\ss e 8-10, 1040 Vienna, Austria, \email{tobias.unterberger@tuwien.ac.at}}
}
\shortauthor{Clason, Unterberger}

\acknowledgements{%
    This research was funded in whole or in part by the Austrian Science
    Fund (FWF) \href{https://dx.doi.org/10.55776/F100800}{10.55776/F100800}.
}

\begin{document}

\maketitle

\begin{abstract}
    We consider variational image denoising with spatially dependent
    Orlicz regularization and introduce an Orlicz--Sobolev approximation
    of the ROF model based on a scaled $L\log L$-type density. For a
    general class of uniformly superlinear integrands satisfying a
    $\Delta_2$-condition, we establish existence and uniqueness of
    minimizers and derive a Fenchel dual problem with dual attainment
    and pointwise optimality conditions. We then specialize the framework
    to a logarithmic density whose correction is activated above a
    prescribed local gradient scale. For this model, we obtain explicit
    expressions of the Fenchel dual and optimality conditions as well
    as a pointwise radial formula for the
    dual proximal map involving the Lambert $W$-function. As the
    logarithmic parameter tends to zero, we prove equicoercivity and
    $\Gamma$-convergence to the ROF functional, together with convergence
    of the corresponding minimizers. Numerical illustrations indicate
    that the logarithmic correction can reduce staircasing for diffuse
    transitions, while retaining behavior comparable to ROF for sharp
    interfaces and a natural image.

    \medskip

    \noindent
    \textsf{\color{structure} 2020 Mathematics Subject Classification:}
    \textsc{49j45, 46e30, 49m29}

    \medskip

    \noindent
    \textsf{\color{structure} Keywords and phrases:}
    generalized Orlicz spaces; image denoising; total variation;
    ROF model; $\Gamma$-convergence; Fenchel duality;
    primal--dual splitting.
\end{abstract}


\section{Introduction}
\label{s:1}

Total variation regularization is a standard tool in variational image
processing, since it suppresses noise while allowing for discontinuities
representing edges. Given a noisy image $f\in L^2(\Omega)$ and a
regularization parameter $\alpha>0$, the Rudin--Osher--Fatemi (ROF) model
\cite{RudinOsherFatemi1992} consists in minimizing
\[
    \mathcal F_0(u)
    =
    \frac12\norm{u-f}_{L^2(\Omega)}^2
    +
    \alpha\abs{Du}(\Omega)
\]
over $BV(\Omega)\cap L^2(\Omega)$. The one-homogeneous growth of the
regularization term permits singular derivatives and thus avoids smoothing
across jump discontinuities as strongly as quadratic Sobolev regularization.
At the same time, total variation regularization tends to favour piecewise
constant reconstructions and may create artificial staircasing in regions
containing smooth or diffuse transitions
\cite{ChambolleLions1997,Jalalzai2016}. This has motivated, among other
approaches, higher-order regularizers such as total generalized variation
\cite{BrediesKunischPock2010}.

A different possibility, aimed at reducing staircasing in smooth or
diffuse regions while retaining a first-order model, is to replace the
linear growth of total variation by a suitably chosen non-standard growth
condition. This leads naturally to the question of whether one can
introduce a superlinear correction that discourages the concentration of
diffuse transitions into narrow regions of large gradient, while remaining
sufficiently close to linear growth to recover total variation in a
suitable limiting regime. Orlicz spaces provide a natural framework for
such models, since their defining modulars allow growth laws beyond fixed
powers; see, for instance,
\cite{rao1991theory,harjulehto2019generalized}. Their generalized,
spatially dependent counterparts additionally permit the regularization
behavior to vary across the image. Spatially dependent first-order energies
have previously been considered through variable-exponent models
\cite{ChenLevineRao2006} and double-phase regularization
\cite{HarjulehtoHasto2021}. More recently, a general framework for bounded
variation spaces with generalized Orlicz growth was developed in
\cite{EleuteriHarjulehtoHasto2025}.

Motivated by this question, we introduce and analyze a spatially scaled
$L\log L$-type regularization that provides a superlinear approximation
of total variation. Let
\[
    \kappa\in L^\infty(\Omega),
    \qquad
    0<\kappa_-\leq\kappa(x)\leq\kappa_+<\infty
    \quad\text{for a.e. }x\in\Omega,
\]
and, for $\gamma>0$, define
\[
    \varphi_\gamma(x,t)
    :=
    \alpha t
    \left(
        1+\gamma\log^+\frac{t}{\kappa(x)}
    \right),
    \qquad t\geq0,
\]
where $\log^+r:=\max\{0,\log r\}$ for $r>0$, with
$\log^+0:=0$.
For the sake of presentation, we omit here the dependence on $\kappa$ as this
is not the focus of this work.
We consider the corresponding functional
\begin{equation}
    \mathcal F_\gamma(u)
    =
    \frac12\norm{u-f}_{L^2(\Omega)}^2
    +
    \int_\Omega
    \varphi_\gamma(x,\abs{\nabla u(x)})\,dx.
    \label{E: introduction scaled logarithmic functional}
\end{equation}
The scale function $\kappa$ determines the gradient magnitude at which the
logarithmic correction becomes active. More precisely,
$\varphi_\gamma(x,t)=\alpha t$ for $0\leq t\leq\kappa(x)$, whereas gradients
above this local scale receive an additional logarithmic penalty. Thus the
model retains the total-variation density for smaller gradients while
discouraging the concentration of a diffuse transition into narrow regions
of large gradient. The parameter $\gamma$ controls the strength of this
correction.

For every fixed $\gamma>0$, the density $\varphi_\gamma$ has superlinear
growth of $L\log L$-type, uniformly in the spatial variable. The
corresponding minimization problem is naturally posed in an
Orlicz--Sobolev space. Functions with finite energy belong to
$W^{1,1}(\Omega)$, so their distributional derivatives have no singular
part. In particular, jump discontinuities are excluded for fixed
$\gamma>0$, whereas they are admissible in the ROF model.
On the other hand,
\[
    \varphi_\gamma(x,t)\longrightarrow\alpha t
    \qquad\text{as }\gamma\downarrow0
\]
locally uniformly in $t$ and uniformly with respect to $x$. This suggests
viewing the family
\eqref{E: introduction scaled logarithmic functional}
as an Orlicz--Sobolev approximation of the ROF functional, a connection
made rigorous in \cref{s:4.gamma} through $\Gamma$-convergence.

The functional-analytic treatment of this model requires some care.
Although the scaled logarithmic density satisfies a uniform
$\Delta_2$-condition, its conjugate has exponential growth and does not
satisfy a corresponding $\Delta_2$-condition. In particular, the natural
Orlicz spaces need not be reflexive, and the full dual Orlicz space must be
distinguished from the finite-modular domain of the conjugate functional.
While the general theory of convex integral functionals on Orlicz spaces
is classical, see, e.g., \cite{rockafellar1968integrals,kozek1980convex},
these distinctions are essential for obtaining the correct dual problem
for the present model.

We therefore first establish a convex-analytic framework for functionals
of the form
\[
    \mathcal F(u)
    =
    \frac12\norm{u-f}_{L^2(\Omega)}^2
    +
    \int_\Omega
    \varphi(x,\abs{\nabla u(x)})\,dx,
\]
where the integrand $\varphi(x,t)$ depends measurably on the spatial
variable $x$, has superlinear growth in $t$ uniformly in $x$, and satisfies
a $\Delta_2$-condition in $t$ with constants independent of $x$.
Using uniform integrability and weak compactness in
$L^1$, we prove compactness of energy-bounded sequences and existence of a
minimizer without relying on reflexivity of the underlying Orlicz space.
The strict convexity of the fidelity term yields uniqueness. We further
identify the dual space and the Fenchel conjugate of the modular, including
the case in which the conjugate integrand fails the
$\Delta_2$-condition. This leads to dual attainment, an explicit Fenchel
dual problem, and a pointwise optimality system in which the homogeneous
normal boundary condition is encoded in a weak integration-by-parts
identity.

We then specialize these results to the scaled logarithmic density.
Its scalar conjugate and the relevant subdifferentials can be computed
explicitly, which gives a concrete dual formulation and optimality system.
We also identify the formal primal--dual splitting structure associated
with the model. As in ROF denoising, the proximal map of the fidelity term
is explicit and the dual proximal map acts pointwise and radially. In the
scaled logarithmic case, its radial profile admits an explicit piecewise
formula involving the principal branch of the Lambert $W$-function. The
model therefore retains the basic computational structure of the
primal--dual method of \cite{ChambollePock2011}, despite the additional
nonlinear growth.

The relation to total variation is made rigorous by proving that
$\mathcal F_\gamma$ is equicoercive and $\Gamma$-converges to the ROF
functional $\mathcal F_0$ with respect to strong convergence in
$L^1(\Omega)$ as $\gamma\downarrow0$. Since both the relaxed and limiting
problems have unique minimizers, this also yields convergence of the
minimizers to the ROF solution. Finally, numerical experiments explore the 
finite-$\gamma$ behavior of the regularizer by comparing the scaled logarithmic
model with ROF and quadratic $H^1$ regularization.
A blurred-disk experiment indicates that the logarithmic correction can
reduce staircasing for genuinely diffuse transitions, while the
sharp-interface and natural-image examples yield results comparable to
ROF.

\paragraph{Overview.}
\Cref{s:2} collects the required background on uniform integrability,
Fenchel duality, Orlicz spaces, and the ROF functional. In \cref{s:3}, we
develop the general convex-analytic framework for spatially dependent
Orlicz regularization, including existence and uniqueness, the conjugate
modular, Fenchel duality, and the associated optimality conditions.
\Cref{s:4} introduces the scaled logarithmic model and derives its basic
properties, explicit dual formulation, optimality system, and formal
primal--dual splitting structure. The final part of that section proves
the $\Gamma$-convergence to the ROF functional as $\gamma\downarrow0$.
The numerical realization and the blurred-disk and natural-image
illustrations are presented in \cref{s:5}. The paper concludes with a
summary of the analytical and numerical results.


\section{Preliminaries}
\label{s:2}

\subsection{Notation and conventions}
\label{s:notation}

We write $\R_\infty:=\R\cup\{+\infty\}$. Unless stated otherwise, $\Omega\subset\R^N$ is a bounded Lipschitz
domain. In the imaging applications motivating the paper, $N=2$.
We denote by $\partial\Omega$ the boundary of $\Omega$ and by $\nu$
its outer unit normal, which is defined almost everywhere on
$\partial\Omega$.

The $N$-dimensional Lebesgue measure is denoted by
$\mathcal{L}^N$, and we abbreviate
\[
    \abs{E}:=\mathcal{L}^N(E)
\]
for every Lebesgue-measurable set $E\subset\R^N$. Unless another
measure is specified, all integrals are taken with respect to
$\mathcal{L}^N$. Inequalities between measurable
functions are understood almost everywhere unless explicitly stated
otherwise. We write $\chi_E$ for the characteristic function of a
measurable set $E$ and $\mu\ll\nu$ for absolute continuity of the
measure $\mu$ with respect to the measure $\nu$.

For $m\in\N$, the space $L^0(\Omega;\R^m)$ consists of equivalence
classes of Lebesgue-measurable functions from $\Omega$ to $\R^m$,
where functions agreeing almost everywhere are identified. In the
scalar-valued case, we omit the target space and write
$L^0(\Omega)$. The same convention is used for the spaces
$L^p(\Omega;\R^m)$, $W^{1,p}(\Omega;\R^m)$, and their Orlicz
counterparts. We denote by $C_c^\infty(\Omega)$ the space of smooth
functions with compact support in $\Omega$.

The symbol $\abs{\cdot}$ denotes the absolute value on $\R$ and the
Euclidean norm on $\R^m$, as determined by the context, while
$z\cdot\xi$ denotes the Euclidean inner product. If $X$ is a normed
space, its norm is denoted by $\norm{\cdot}_X$; the subscript is
occasionally omitted when the underlying space is clear. We write
\[
    u_n\to u \quad\text{in }X
\]
for strong convergence in $X$ and
\[
    u_n\rightharpoonup u \quad\text{in }X
\]
for weak convergence. The symbol $X\hookrightarrow Y$ denotes a
continuous embedding. Whenever a subsequence is extracted, it is
not relabelled unless this is necessary for clarity.

For $u\in W^{1,1}(\Omega)$, the symbol $\nabla u$ denotes its weak
gradient. For $u\in BV(\Omega)$, we denote its distributional
derivative by $Du$ and its total variation measure by $\abs{Du}$.
In particular, $\abs{Du}(\Omega)$ denotes the total variation of
$u$ on $\Omega$. If $u\in W^{1,1}(\Omega)$, then
\[
    Du=\nabla u\,\mathcal{L}^N,
    \qquad
    \abs{Du}(\Omega)
    =
    \int_\Omega\abs{\nabla u}\,dx.
\]
The divergence of a vector field $p$ is denoted by
$\operatorname{div}p$ and is understood in the distributional sense
unless additional regularity is specified.

For a Banach space $X$, we write $X^*$ for its continuous dual and
$\langle x^*,x\rangle_X$ for the duality pairing between
$x^*\in X^*$ and $x\in X$. If $A:X\to Y$ is continuous and linear,
then $A^*:Y^*\to X^*$ denotes its adjoint. For a set $S$ in a
topological space, $S^\circ$ denotes its interior. If
$H:X\to\R_\infty$ is an extended-real-valued functional, then
\[
    \operatorname{dom}H
    :=
    \{x\in X:H(x)<+\infty\}
\]
is its effective domain. A superscript $^*$ applied to a functional
denotes its Fenchel conjugate, whereas a superscript $^*$ applied
to a Banach space denotes its continuous dual. The convex
subdifferential of $H$ at $x$ is denoted by $\partial H(x)$.
Subscripts such as $\partial_t\varphi$ and $\partial_z\Phi$ indicate
the variable with respect to which the convex subdifferential is
taken.

If $X$ is a Hilbert space, $H:X\to\R_\infty$ is proper, convex, and
lower semicontinuous, and $\tau>0$, we use the convention
\[
    \operatorname{prox}_{\tau H}(w)
    :=
    \operatorname*{arg\,min}_{v\in X}
    \left\{
        \frac{1}{2\tau}\norm{v-w}_X^2+H(v)
    \right\}
    =
    (I+\tau\partial H)^{-1}(w).
\]

For an Orlicz integrand $\varphi$, the symbol $\rho_\varphi$ denotes
the associated modular, $L^\varphi$ the corresponding Orlicz space,
and $\norm{\cdot}_\varphi$ its Luxemburg norm. Precise definitions
are recalled below. The pointwise Fenchel conjugate of the scalar
integrand $t\mapsto\varphi(x,t)$ is denoted by $\varphi^*(x,\cdot)$.
For the associated radial integrand
\[
    \Phi(x,z):=\varphi(x,\abs{z}),
\]
we write $\Phi^*(x,\cdot)$ for the corresponding vectorial
conjugate. We emphasize the distinction between
\[
    \rho_\varphi^*
    \qquad\text{and}\qquad
    \rho_{\varphi^*}:
\]
the former is the Fenchel conjugate of the functional
$\rho_\varphi$, while the latter is the modular generated by the
pointwise conjugate integrand $\varphi^*$.

Finally, for $r>0$, we set
\[
    \log^+r:=\max\{0,\log r\},
    \qquad
    \log^+0:=0.
\]
For discrete images $u,v\in\R^{n\times n}$, we define the mean squared
error by
\[
    \operatorname{MSE}(u,v)
    :=
    \frac{1}{n^2}
    \sum_{i,j=1}^n
    \abs{u_{ij}-v_{ij}}^2.
\]
The peak signal-to-noise ratio is
\[
    \operatorname{PSNR}(u,v)
    :=
    10\log_{10}
    \left(
        \frac{L^2}{\operatorname{MSE}(u,v)}
    \right),
\]
where $L$ denotes the maximum possible image intensity. All images used
in the numerical experiments are normalized to $[0,1]$, and hence
$L=1$. PSNR values are reported in decibels~(dB).

\subsection{Uniform integrability and de La Vallée Poussin's theorem}
\label{s:2.1}

We recall the notion of uniform integrability following Rao--Ren \cite{rao1991theory}.

\begin{definition}[uniform integrability]
    A sequence $\{f_n\}_{n \in \N} \subset L^0(\Omega)$ is called \emph{uniformly integrable} if
    \[
        \lim_{\abs{A} \to 0}\int_A \abs{f_n} \ dx = 0,
    \]
    uniformly in $n \in \N$.
\end{definition}

We also use the following characterization of uniform integrability; see \cite[Theorem~2]{rao1991theory}.

\begin{theorem}[de La Vallée Poussin]
    \label{T: Vallee Poussin}
    Let $\{f_n\}_{n \in \N} \subset L^0(\Omega)$. Then the following are equivalent
    \begin{itemize}
        \item[(i)]  $\{f_n\}_{n \in \N}$ is uniformly integrable.
        \item[(ii)] $\displaystyle\lim_{\lambda \to \infty} \sup_{n \in \N} \int_{\{\abs{f_n} > \lambda\}} \abs{f_n} \ dx = 0.$ \item[(iii)] There exists a convex function $\Phi: \R \to [0,\infty)$ such that $\Phi(0) = 0$, $\Phi(-x) = \Phi(x)$, and $\frac{\Phi(x)}{x} \to \infty$ as $x \to \infty$, in terms of which
        \[
            C = \sup_{n \in \N}\int_\Omega \Phi(f_n) \ dx < \infty.
        \]
    \end{itemize}
\end{theorem}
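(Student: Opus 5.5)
We prove the cycle (i)$\Rightarrow$(ii)$\Rightarrow$(iii)$\Rightarrow$(i), using throughout that $\abs{\Omega}<\infty$. Some care is needed with (i), since it is stated only as $\int_A\abs{f_n}\to0$ uniformly as $\abs{A}\to0$. A family can satisfy this and still fail to be $L^1$-bounded when $\Omega$ is a single atom-free set, for instance $f_n\equiv n$. Because $\Omega$ is bounded and Lebesgue measure is nonatomic, however, we can cover $\Omega$ by finitely many sets $A_1,\dots,A_k$ with $\abs{A_j}<\delta$, where $\delta$ is chosen so that $\int_A\abs{f_n}\le1$ whenever $\abs{A}<\delta$. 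This gives $\sup_n\norm{f_n}_{L^1}\le k<\infty$. So (i) implies $L^1$-boundedness, which is the first step.

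For (i)$\Rightarrow$(ii), Chebyshev's inequality gives $\abs{\{\abs{f_n}>\lambda\}}\le \lambda^{-1}\sup_m\norm{f_m}_{L^1}$, and this tends to $0$ uniformly in $n$. Taking $A=\{\abs{f_n}>\lambda\}$ in (i) then yields (ii).

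For (ii)$\Rightarrow$(iii) we construct $\Phi$ explicitly, and this is the main step. Choose an increasing sequence $\lambda_k\uparrow\infty$ such that
\[
\sup_n\int_{\{\abs{f_n}>\lambda_k\}}\abs{f_n}\,dx\le 2^{-k}.
\]
Define $g(s):=\#\{k:\lambda_k<s\}$ for $s\ge0$. This $g$ is nondecreasing, integer-valued, and unbounded. Set $\Phi(x):=\int_0^{\abs{x}}g(s)\,ds$. Then $\Phi$ is even, $\Phi(0)=0$, and $\Phi$ is convex because its derivative $g$ is nondecreasing. Moreover $\Phi(x)/x\ge g(x/2)/2\to\infty$. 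To bound the modular, use $\Phi(t)\le t\,g(t)=\sum_k t\,\chi_{\{t>\lambda_k\}}$. This gives
\[
\int_\Omega\Phi(f_n)\,dx\le\sum_k\int_{\{\abs{f_n}>\lambda_k\}}\abs{f_n}\,dx\le\sum_k 2^{-k}\le 1
\]
for every $n$. The only real choice here is the rate $2^{-k}$, which makes the series summable.

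For (iii)$\Rightarrow$(i), fix $\varepsilon>0$. By superlinearity there is $M$ such that $\Phi(x)\ge (C/\varepsilon)\abs{x}$ for $\abs{x}\ge M$. Then, for any measurable $A$,
\[
\int_A\abs{f_n}\,dx\le M\abs{A}+\frac{\varepsilon}{C}\int_{\{\abs{f_n}\ge M\}}\Phi(f_n)\,dx\le M\abs{A}+\varepsilon.
\]
Hence $\int_A\abs{f_n}\le 2\varepsilon$ uniformly in $n$ whenever $\abs{A}\le\varepsilon/M$, which is (i).
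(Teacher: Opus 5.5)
The paper does not prove this theorem. It quotes it from Rao--Ren \cite[Theorem~2]{rao1991theory}, so there is no in-paper argument to match. Your proof is correct, self-contained, and follows the classical route. For (ii)$\Rightarrow$(iii) you build $\Phi$ from a nondecreasing, integer-valued step slope $g$ whose jump levels $\lambda_k$ are chosen so that the tail integrals are summable, and the bound $\Phi(t)\leq t\,g(t)$ plus Tonelli gives $\sup_n\int_\Omega\Phi(f_n)\,dx\leq 1$. For (iii)$\Rightarrow$(i) you split at a level $M$ beyond which $\Phi$ dominates a large multiple of $\abs{x}$.

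What your write-up supplies beyond the bare citation is the preliminary step. The paper states (i) for sequences in $L^0(\Omega)$ with no $L^1$-bound built in, so $\sup_n\norm{f_n}_{L^1(\Omega)}<\infty$ must be derived before Chebyshev can be used in (i)$\Rightarrow$(ii). You derive it correctly from $\abs{\Omega}<\infty$ and nonatomicity. Concretely, you can simply cover the bounded set $\Omega$ by finitely many cubes of volume less than $\delta$.

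There are two small slips, neither of which affects validity. First, your motivating remark is misstated. On an atom-free set of positive measure, $f_n\equiv n$ violates (i): any $A\subset\Omega$ with $0<\abs{A}<\delta$ gives $\int_A f_n\,dx=n\abs{A}\to\infty$. The counterexample you have in mind is a measure space consisting of a single atom, where every set of small measure is null, so (i) holds vacuously while the $L^1$-norms blow up. Second, in (iii)$\Rightarrow$(i) you divide by $C$, which may equal $0$. Replacing $C$ by $\max\{C,1\}$ in the choice of $M$ fixes this.
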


Finally, we recall the Dunford--Pettis theorem in the form stated in \cite[Theorem~1.43]{evans2015measure}.

\begin{theorem}[Dunford--Pettis]
    Let $\{f_n\}_{n \in \N} \subset L^1(\Omega)$ satisfy
    \[
        \sup_{n\in\N}\int_\Omega \abs{f_n} \ dx < \infty
    \]
    and suppose that
    \[
        \lim_{\lambda \to \infty} \sup_{n \in \N} \int_{\{\abs{f_n} > \lambda\}} \abs{f_n} \ dx = 0.
    \]
    Then there exist a subsequence $\{f_{n_k}\}_{k \in \N} $ and $f \in L^1(\Omega)$ such that
    \[
        f_{n_k} \rightharpoonup f \quad \mathrm{in} \ L^1(\Omega).
    \]
\end{theorem}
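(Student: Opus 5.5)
The plan is to reduce the statement to the de La Vallée Poussin characterization (\cref{T: Vallee Poussin}) and then extract a weakly convergent subsequence from compactness in $\sigma(L^1,L^\infty)$. Since only the statement is at hand, I take the theorem to be the Dunford--Pettis criterion just quoted: an $L^1$-bounded sequence with uniformly vanishing tails $\sup_n\int_{\{\abs{f_n}>\lambda\}}\abs{f_n}\,dx\to0$ as $\lambda\to\infty$ has a subsequence converging weakly in $L^1(\Omega)$.

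\textbf{Step 1: uniform integrability.} By the equivalence (ii)$\Leftrightarrow$(i) of \cref{T: Vallee Poussin}, the tail hypothesis gives uniform integrability. Concretely, for every measurable $A\subset\Omega$ and every $\lambda>0$,
\[
\int_A\abs{f_n}\,dx\le\lambda\abs{A}+\sup_m\int_{\{\abs{f_m}>\lambda\}}\abs{f_m}\,dx .
\]
Choosing $\lambda$ large first and then $\abs{A}$ small makes the right-hand side small, uniformly in $n$.

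\textbf{Step 2: a limiting functional.} Let $\mathcal{A}$ be a countable algebra of measurable sets generating the Borel $\sigma$-algebra, for instance finite unions of dyadic cubes intersected with $\Omega$. The numbers $\int_A f_n\,dx$ are bounded by $\sup_n\norm{f_n}_{L^1}$, so a diagonal argument gives a subsequence along which $\nu(A):=\lim_k\int_A f_{n_k}\,dx$ exists for all $A\in\mathcal{A}$. Uniform integrability gives $\abs{\nu(A)}\le\varepsilon$ whenever $\abs{A}\le\delta(\varepsilon)$. I then extend $\nu$ to all measurable sets using a monotone class / approximation argument: any measurable $E$ is approximated in measure by sets in $\mathcal{A}$, and the uniform bound transfers Cauchy-ness of $\int_E f_{n_k}\,dx$. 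The resulting set function is countably additive and absolutely continuous with respect to $\mathcal{L}^N$. Radon--Nikodym then yields $f\in L^1(\Omega)$ with $\nu(E)=\int_E f\,dx$ for all measurable $E$.

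\textbf{Step 3: testing against $L^\infty$.} Convergence of $\int_E f_{n_k}\,dx$ to $\int_E f\,dx$ for all measurable $E$ extends by linearity to simple functions. For general $g\in L^\infty(\Omega)$, approximate $g$ uniformly by simple functions and use the uniform $L^1$ bound on $(f_{n_k})$ to pass to the limit. This is exactly $f_{n_k}\rightharpoonup f$ in $L^1(\Omega)$.

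\textbf{Main obstacle.} The hardest part is Step 2, namely the extension from the countable algebra to all measurable sets together with countable additivity of the limit. Uniform integrability is used precisely here: it supplies uniform absolute continuity, which lets one interchange the limit in $k$ with the set approximation. An alternative route is to cite Eberlein--\v{S}mulian together with the weak-$*$ compactness of the image in $(L^\infty)^*$, showing that the limit charge is countably additive by uniform integrability. I would present the measure-theoretic version, since it is self-contained given \cref{T: Vallee Poussin}.
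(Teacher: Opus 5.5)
Your proof is correct. The paper, however, offers no argument to compare it with: the Dunford--Pettis theorem is quoted from \cite[Theorem~1.43]{evans2015measure} as background. It is then used without proof in \cref{L: compactness}, to extract weakly convergent subsequences of the components $\partial_i u_n$.

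What you give is a self-contained measure-theoretic proof. It runs through uniform integrability, a diagonal extraction on a countable generating algebra, and extension of the limit set function by approximation in measure. It finishes with Radon--Nikodym and uniform approximation of $L^\infty$ functions by simple functions.

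Citing the result, as the paper does, is the economical choice for a standard tool. Your route has its own advantages over the usual functional-analytic proofs, such as Eberlein--\v{S}mulian with weak-$*$ compactness in $(L^\infty)^*$, or truncation at height $j$ with weak compactness in $L^2$ and a diagonal argument. It needs no compactness beyond extracting convergent subsequences of countably many bounded real sequences, and it shows exactly where uniform integrability enters.

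Your proof also uses the same mechanism as the paper's own proof of \cref{L: dual space}. There, $E\mapsto\ell(\chi_E e_i)$ is countably additive because the Luxemburg norm is order continuous. Here, $E\mapsto\lim_k\int_E f_{n_k}\,dx$ is countably additive because of uniform integrability. In both cases Radon--Nikodym and density of simple functions finish the argument.

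When you write it up, state explicitly the two facts you only assert in Step~2; each is a one-line consequence of uniform integrability. For Lebesgue-measurable $E$ and $A\in\mathcal A$ with $\abs{E\triangle A}$ small,
\[
\Bigl|\int_E f_{n_k}\,dx-\int_E f_{n_l}\,dx\Bigr|
\le
\Bigl|\int_A f_{n_k}\,dx-\int_A f_{n_l}\,dx\Bigr|
+2\sup_m\int_{E\triangle A}\abs{f_m}\,dx .
\]
For pairwise disjoint $E_j$ with $E=\bigcup_jE_j$ and $E^{(m)}=\bigcup_{j\le m}E_j$, finite additivity gives
\[
\abs{\nu(E)-\nu(E^{(m)})}
=\abs{\nu(E\setminus E^{(m)})}
\le
\sup_k\int_{E\setminus E^{(m)}}\abs{f_{n_k}}\,dx\to0 .
\]
Finally, $\abs{\nu(E)}\le\sup_n\norm{f_n}_{L^1(\Omega)}$ shows that $\nu$ is a finite signed measure. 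The distinction between Lebesgue and Borel sets is harmless, since they differ by null sets, on which all the integrals vanish.
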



\subsection{Convex subdifferentials and Fenchel duality}
\label{s:2.2}

For this section let $X$ and $Y$ be Banach spaces, let $F: X \to \R_\infty$ and $G: Y \to \R_\infty$ be proper, convex, and lower semicontinuous, and let $A: X \to Y$ be continuous and linear.

\begin{definition}[convex subdifferential]
The subdifferential of $F$ at $x\in\operatorname{dom}F$ is
\[
    \partial F(x)
    :=
    \left\{
    x^*\in X^*
    \;\middle|\;
    \langle x^*,\tilde x-x\rangle_X
    \leq
    F(\tilde x)-F(x)
    \quad\text{for all }\tilde x\in X
    \right\}.
\]
\end{definition}
\begin{definition}[Fenchel conjugate]
    The Fenchel conjugate of $F: X \to \R_\infty$ is given by
    \[
        F^*: X^* \to \R_\infty, \quad F^*(x^*) = \sup_{x \in X} \langle x^*,x\rangle_X - F(x).
    \]
\end{definition}

\begin{definition}[primal and dual problem]
    The primal problem is
    \begin{equation}
        \inf_{x \in X} \left(F(x) + G(Ax)\right)
        \tag{P} \label{D: primal problem}
    \end{equation}
    with the corresponding dual problem
    \begin{equation}
        \sup_{y^* \in Y^*}\left( -F^*(-A^* y^*) - G^*(y^*)\right)
        \tag{D} \label{D: dual problem}
    \end{equation}
\end{definition}

\begin{theorem}[Fenchel--Rockafellar]
    \label{T: Fenchel.Rockafeller}
    Assume that
    \begin{itemize}
        \item[(i)] the primal problem \eqref{D: primal problem} admits a solution $\overline{x} \in X$;
        \item[(ii)] there exists $x_0 \in \mathrm{dom}F \cap \mathrm{dom}(G\circ A)$ with $Ax_0 \in (\mathrm{dom G})^\circ$.
    \end{itemize}
    Then, the dual problem \eqref{D: dual problem} admits a solution $\overline{y}^* \in Y^*$ and
    \[
        \min_{x \in X} \left( F(x) + G(Ax)\right) = \max_{y^* \in Y^*} \left( -F^*(-A^*y^*) - G^*(y^*)\right).
    \]
    Furthermore, $\overline{x}$ and $\overline{y}^*$ are solutions to \eqref{D: primal problem} and \eqref{D: dual problem} respectively, if and only if
    \begin{equation}
        \left\{
        \begin{aligned}
            -A^*\overline{y}^* &\in \partial F(\overline{x}),\\
            \overline{y}^* &\in \partial G(A\overline{x}).
        \end{aligned}
        \right.
        \tag{OC}\label{D: optimality conditions}
    \end{equation}
\end{theorem}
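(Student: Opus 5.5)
We prove \cref{T: Fenchel.Rockafeller} in three steps: weak duality, strong duality with dual attainment via a perturbation (value-function) argument, and the extremality relations.

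\textbf{Weak duality.} For any $x\in X$ and $y^*\in Y^*$ the Fenchel--Young inequality gives
\[
    F(x)\geq\langle -A^*y^*,x\rangle_X-F^*(-A^*y^*),
    \qquad
    G(Ax)\geq\langle y^*,Ax\rangle_Y-G^*(y^*).
\]
Adding these and using $\langle -A^*y^*,x\rangle_X=-\langle y^*,Ax\rangle_Y$ yields
\[
    F(x)+G(Ax)\geq -F^*(-A^*y^*)-G^*(y^*).
\]
Hence $\inf\eqref{D: primal problem}\geq\sup\eqref{D: dual problem}$. By (i) the primal infimum is attained at $\overline x$, and it is finite because $x_0$ has finite primal value.

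\textbf{Strong duality and dual attainment.} Introduce the value function
\[
    v(y):=\inf_{x\in X}\bigl(F(x)+G(Ax+y)\bigr),\qquad y\in Y.
\]
Since $(x,y)\mapsto F(x)+G(Ax+y)$ is jointly convex, $v$ is convex, and $v(0)=F(\overline x)+G(A\overline x)$ is finite. By (ii), $G$ is finite on a ball $B_r(Ax_0)$, so $v(y)\leq F(x_0)+G(Ax_0+y)<\infty$ for $\norm{y}<r$.

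\emph{This is the main obstacle.} Finiteness of $v$ near $0$ does not by itself give continuity in infinite dimensions, and we need $v$ continuous at $0$ to have $\partial v(0)\neq\emptyset$.
\begin{itemize}
    \item[(a)] First show $G$ is continuous at $Ax_0$. Since $G$ is lower semicontinuous and convex, the set $\{G\leq c\}$ is closed. The ball $B_r(Ax_0)$ is covered by the countably many closed sets $\{G\leq k\}\cap \overline{B}_{r/2}(Ax_0)$, $k\in\N$, so by Baire's theorem one of them has nonempty interior. Thus $G$ is bounded above on some ball, and a convex function bounded above on a ball around an interior point of its domain is continuous on the interior of its domain; in particular at $Ax_0$.
    \item[(b)] Then $v(y)\leq F(x_0)+G(Ax_0+y)$ shows that $v$ is bounded above near $0$. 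Being convex and finite at $0$ (so $v>-\infty$ everywhere), $v$ is continuous at $0$.
\end{itemize}
By the Hahn--Banach theorem, continuity at $0$ gives some $\overline y^*\in\partial v(0)$.

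Next compute the conjugate of $v$. Substituting $w=Ax+y$,
\[
    v^*(y^*)=\sup_{x,y}\bigl(\langle y^*,y\rangle_Y-F(x)-G(Ax+y)\bigr)=F^*(-A^*y^*)+G^*(y^*).
\]
The condition $\overline y^*\in\partial v(0)$ is equivalent to $v(0)+v^*(\overline y^*)=0$, that is,
\[
    v(0)=-F^*(-A^*\overline y^*)-G^*(\overline y^*).
\]
Combined with weak duality, this shows that $\overline y^*$ solves \eqref{D: dual problem} and that the min and max coincide.

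\textbf{Optimality conditions.} Let $x$ and $y^*$ be arbitrary. Equality of the optimal values says that $x$ and $y^*$ are both optimal if and only if
\[
    \bigl[F(x)+F^*(-A^*y^*)-\langle -A^*y^*,x\rangle_X\bigr]
    +\bigl[G(Ax)+G^*(y^*)-\langle y^*,Ax\rangle_Y\bigr]=0.
\]
Both brackets are nonnegative by Fenchel--Young, so the sum vanishes exactly when each bracket vanishes. The equality case of Fenchel--Young, $F(x)+F^*(x^*)=\langle x^*,x\rangle_X$ if and only if $x^*\in\partial F(x)$, turns these two conditions into precisely \eqref{D: optimality conditions}.
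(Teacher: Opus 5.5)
The paper gives no proof of this theorem. It is quoted as background, essentially in the formulation of the textbook by Clason and Valkonen (\cite{NAO:2020}), so there is no in-paper argument to match.

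Your proof is correct, but it takes a different route from the standard textbook proof.

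\textbf{The textbook route.} It goes through subdifferential calculus. From (i) and Fermat's principle, $0\in\partial(F+G\circ A)(\overline x)$. Hypothesis (ii) makes the sum and chain rules applicable, so $0\in\partial F(\overline x)+A^*\partial G(A\overline x)$. That is, there is a $\overline y^*$ satisfying \eqref{D: optimality conditions} directly. The Fenchel--Young equality then shows that the primal value at $\overline x$ equals the dual value at $\overline y^*$, and weak duality makes $\overline y^*$ a dual maximizer. The converse direction is the same two-bracket argument you give.

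\textbf{Your route.} Your perturbation argument replaces the sum and chain rules by a single subgradient of the value function $v$ at $0$, together with the computation $v^*(y^*)=F^*(-A^*y^*)+G^*(y^*)$. The analytic input is essentially the same in both routes: continuity of $G$ at $Ax_0$ (Baire plus convexity), and a Hahn--Banach separation.

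\textbf{What each buys.} In your route, (i) enters only through finiteness of $v(0)$. So you get strong duality and dual attainment without primal attainment, and you identify the dual solution set as $\partial v(0)$. The textbook route needs the minimizer $\overline x$ to invoke Fermat's principle. In exchange, it produces the optimality system at no extra cost and reuses calculus rules that are needed anyway.

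\textbf{Two minor points of precision.}
In step (a), the sets $\{G\leq k\}\cap\overline B_{r/2}(Ax_0)$ cover the closed ball $\overline B_{r/2}(Ax_0)$, not $B_r(Ax_0)$. Apply Baire in this complete metric space, and note that a nonempty relatively open subset of a closed ball in a normed space contains an open ball of $Y$.
In step (b), the conclusion that $v>-\infty$ everywhere uses that $0$ lies in the interior of $\operatorname{dom}v$, which you have just shown. Finiteness of $v(0)$ alone would not suffice.
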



\subsection{Orlicz spaces}
\label{s:2.3}

We follow \cite[Chapter~3]{harjulehto2019generalized} for ordinary Orlicz spaces, where $\varphi$ depends only on the scalar variable. \Cref{s:3} uses an $x$-dependent modular, but only the basic growth, modular, and duality notions below are needed in the sequel.

\begin{definition}[growth conditions]
    Let $f: (0,\infty) \to \R$ and $p,q > 0$. We say that $f$ satisfies
    \begin{enumerate}[labelindent=*,align=left]
        \item[$(\mathrm{Inc})_p$] if $\frac{f(t)}{t^p}$ is increasing;
        \item[$(\mathrm{aInc})_p$] if $\frac{f(t)}{t^p}$ is almost increasing;
        \item[$(\mathrm{Dec})_q$] if $\frac{f(t)}{t^q}$ is decreasing;
        \item[$(\mathrm{aDec})_q$] if $\frac{f(t)}{t^q}$ is almost decreasing;
    \end{enumerate}
    We say that $f$ satisfies $(\mathrm{aInc})$, $(\mathrm{Inc})$, $(\mathrm{aDec})$ or $(\mathrm{Dec})$ if there exist $p > 1$ or $q < \infty$ such that $f$ satisfies $(\mathrm{aInc})_p$, $(\mathrm{Inc})_p$, $(\mathrm{aDec})_q$ or $(\mathrm{Dec})_q$, respectively.
\end{definition}

\begin{definition}[$\Delta_2$-growth condition]
    We say that a function $\varphi:[0,\infty) \to [0,\infty]$ satisfies $\Delta_2$, if there exists a constant $K \geq 2$ such that
    \[
        \varphi(2t) \leq K\varphi(t) \quad \mathrm{for \ all} \quad t \geq 0.
    \]
\end{definition}

\begin{definition}[$\Phi$-function]
    Let $\varphi: [0,\infty) \to [0,\infty]$ be increasing with $\varphi(0) = 0$, $\lim_{t \to 0^+}\varphi(t) = 0$ and $\lim_{t \to \infty}\varphi(t) = \infty$. We say that $\varphi$ is a
    \begin{enumerate}[label=(\roman*)]
        \item (weak) $\Phi$-function if it satisfies $(\mathrm{aInc})_1$ on $(0,\infty)$;
        \item convex $\Phi$-function if it is left-continuous and convex;
        \item strong $\Phi$-function if it is continuous in the topology of $[0,\infty]$ and convex.
    \end{enumerate}
    The sets of weak, convex and strong $\Phi$-functions are denoted by $\Phi_w$, $\Phi_c$ and $\Phi_s$, respectively.
\end{definition}

\begin{definition}[Orlicz space]
Let $\varphi\in\Phi_w$. The modular associated with $\varphi$ is
\[
    \rho_\varphi(f)
    :=
    \int_\Omega \varphi(\abs{f(x)})\,dx.
\]
The Orlicz space $L^\varphi(\Omega)$ is defined by
\[
    L^\varphi(\Omega)
    :=
    \left\{
    f\in L^0(\Omega)
    \;\middle|\;
    \rho_\varphi(\lambda f)<\infty
    \text{ for some }\lambda>0
    \right\}.
\]
\end{definition}

\begin{definition}[Luxemburg norm]
For $f\in L^\varphi(\Omega)$, define
\[
    \norm{f}_\varphi
    :=
    \inf
    \left\{
    \lambda>0
    \;\middle|\;
    \rho_\varphi\left(\frac{f}{\lambda}\right)\leq1
    \right\}.
\]
\end{definition}

\begin{remark}
If $\varphi$ satisfies the $\Delta_2$-condition, then
\[
    L^\varphi(\Omega)
    =
    \left\{
    f\in L^0(\Omega)
    \;\middle|\;
    \rho_\varphi(f)<\infty
    \right\}.
\]
Thus, under the standing $\Delta_2$-assumption used in \cref{s:3}, the modular class and the Luxemburg-norm Orlicz space coincide.
\end{remark}

\begin{definition}[Orlicz--Sobolev space]
    Let $\varphi \in \Phi_w$. The function $u \in L^\varphi(\Omega) \cap L^1_{loc}(\Omega)$ belongs to the \emph{Sobolev space} $W^{1,\varphi}(\Omega)$ if its weak partial derivatives $\partial_i u$ exist and belong to $L^\varphi(\Omega)$ for all $i \in \{1,\dots,N\}$. We set
    \[
        \rho_{W^{1,\varphi}(\Omega)}(u) := \rho_\varphi(u) + \sum_{i = 1}^N \rho_\varphi(\partial_i u),
    \]
    which induces a (quasi-)norm by
    \[
        \norm{u}_{W^{1,\varphi}(\Omega)} := \inf\left\{ \lambda > 0 \st \rho_{W^{1,\varphi}(\Omega)}\left(\frac{u}{\lambda}\right) \leq 1\right\}.
    \]
\end{definition}

With this setup, we record some useful results for us from \cite{harjulehto2019generalized}:

\begin{itemize}
    \item Hölder's inequality:
        \[
            \int_\Omega\abs{f}\abs{g} \ dx \leq 2 \norm{f}_\varphi\norm{g}_{\varphi^*}
        \]
        for all $f \in L^\varphi(\Omega)$ and $g \in L^{\varphi^*}(\Omega)$.
    \item If $\varphi \in \Phi_c$, then $L^\varphi(\Omega)$ and $W^{1,\varphi}(\Omega)$ are Banach spaces.
    \item If $\varphi$ satisfies $\Delta_2$, then boundedness of $\rho_\varphi(f_n)$ implies boundedness of $\norm{f_n}_\varphi$.
    \item If $\lim_{t \to \infty} \frac{\varphi(t)}{t} = \infty$, then $L^\varphi(\Omega) \hookrightarrow L^1(\Omega)$ on bounded domains, and hence $W^{1,\varphi}(\Omega) \hookrightarrow W^{1,1}(\Omega)$.
\end{itemize}


\subsection{The ROF functional}
\label{s:2.4}

The Rudin--Osher--Fatemi (ROF) model is the standard variational model for total variation denoising \cite{RudinOsherFatemi1992} and is given by
\[
    \mathcal F_0(u)
    :=
    \begin{cases}
        \displaystyle
        \frac12\norm{u-f}_{L^2(\Omega)}^2+
        \alpha\abs{Du}(\Omega),
        &
        u\in BV(\Omega)\cap L^2(\Omega),\\[1ex]
        +\infty,
        &
        u\in L^1(\Omega)\setminus\bigl(BV(\Omega)\cap L^2(\Omega)\bigr).
    \end{cases}
\]
Its quadratic fidelity term enforces closeness to the noisy datum $f$, while the total variation term penalizes oscillations without smoothing across jump discontinuities as strongly as Sobolev-type regularizers. The natural domain is $BV(\Omega)\cap L^2(\Omega)$: for $u\in BV(\Omega)$, the distributional derivative $Du$ is a finite Radon measure and need not be represented by an $L^1$-function. We denote by $BV(\Omega)$ the space of functions of bounded variation and by $\abs{Du}(\Omega)$ the total variation of $Du$; for compactness and lower semicontinuity properties of $BV$ we use \cite[Chapter~3]{ambrosio2000functions}.


\section{Results in convex optimization for functionals on Orlicz spaces}
\label{s:3}

From now on let
\[
    \varphi:\Omega\times[0,\infty)\to[0,\infty)
\]
be such that $x\mapsto\varphi(x,t)$ is measurable for every $t\geq0$, and $t\mapsto\varphi(x,t)$ is convex, increasing, continuous, and satisfies $\varphi(x,0)=0$ for almost every $x\in\Omega$. We also assume that $\varphi(\cdot,t)\in L^1(\Omega)$ for every $t\geq0$. As for growth we assume that the uniform $\Delta_2$-condition
\[
    \varphi(x,2t)\leq C_\Delta\varphi(x,t)
    \qquad \text{for a.e. }x\in\Omega,\ t\geq0,
\]
and the uniform superlinear growth condition
\[
    \lim_{t\to\infty}\operatorname*{ess\,inf}_{x\in\Omega}\frac{\varphi(x,t)}{t}=\infty,
\]
are both satisfied. For $z\in\R^N$, set
\[
    \Phi(x,z):=\varphi(x,\abs{z}),
\]
which is convex and lower semicontinuous for almost every $x$. We define the modular
\[
    \rho_\varphi(v):=\int_\Omega \varphi(x,\abs{v(x)})\,dx
    =\int_\Omega \Phi(x,v(x))\,dx
\]
for measurable vector fields $v:\Omega\to\R^N$, and write
\[
    L^\varphi(\Omega;\R^N):=\left\{v\in L^0(\Omega;\R^N)\st
    \rho_\varphi(\lambda v)<\infty\text{ for some }\lambda>0\right\}.
\]
Since the standing assumptions include the uniform $\Delta_2$-condition, this space coincides with the finite-modular class defined by the condition
\[
    \rho_\varphi(v)<\infty.
\]
We consider the standard Luxemburg norm
\[
    \norm{v}_{\varphi}:=\inf\left\{\lambda>0\st \rho_\varphi\left(\frac{v}{\lambda}\right)\leq1\right\}.
\]

For $f\in L^2(\Omega)$, define
\begin{equation}
    \mathcal{F}:X_\varphi\to\R,\qquad
    \mathcal{F}(u):=\frac12\norm{u-f}_{L^2(\Omega)}^2+\rho_\varphi(\nabla u).
    \label{D: functional}
\end{equation}

The natural space for admissible functions is
\[
    X_\varphi:=\left\{u\in L^2(\Omega)\cap W^{1,1}(\Omega)\st \rho_\varphi(\nabla u)<\infty\right\},
\]
equipped with the norm
\[
    \norm{u}_{X_\varphi}:=\norm{u}_{L^2(\Omega)}+\norm{\nabla u}_{\varphi}.
\]
Since $L^\varphi(\Omega;\R^N)$ is a Banach space and the weak gradient is closed, $X_\varphi$ equipped with this norm is a Banach space. Moreover, the operator
\[
    A:X_\varphi\to L^\varphi(\Omega;\R^N),
    \qquad
    Au:=\nabla u,
\]
is continuous.


\subsection{Existence of minimizers}
\label{s:3.1}

The following compactness result is the basic tool for the direct method.

\begin{lemma}[compactness]
    \label{L: compactness}
    Let $\{u_n\}_n\subset X_\varphi$ satisfy
    \[
        \sup_{n\in\N} \mathcal{F}(u_n)
        \leq C.
    \]
    Then there exist a subsequence, not relabelled, and $u\in L^2(\Omega)\cap W^{1,1}(\Omega)$ such that
    \begin{align*}
        u_n&\to u \qquad\text{strongly in }L^1(\Omega),\\
        u_n&\rightharpoonup u \qquad\text{weakly in }L^2(\Omega),\\
        \shortintertext{and}
        \nabla u_n&\rightharpoonup\nabla u \quad\ \text{weakly in }L^1(\Omega;\R^N).
    \end{align*}
\end{lemma}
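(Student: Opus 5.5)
The plan is to extract the three bounds directly from the energy bound and then combine weak compactness in $L^1$ for the gradients, compactness of $BV$ (or $W^{1,1}$) in $L^1$ for the functions, and identification of the limits.

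\textbf{Step 1: $L^2$ bound.} Since both terms of $\mathcal F$ are nonnegative, $\frac12\norm{u_n-f}_{L^2(\Omega)}^2\le C$. Hence $\norm{u_n}_{L^2(\Omega)}\le \sqrt{2C}+\norm{f}_{L^2(\Omega)}$. Reflexivity of $L^2(\Omega)$ gives a subsequence with $u_n\rightharpoonup u$ weakly in $L^2(\Omega)$. Because $\Omega$ is bounded, $(u_n)$ is also bounded in $L^1(\Omega)$.

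\textbf{Step 2: uniform integrability of the gradients.} We also have $\rho_\varphi(\nabla u_n)\le C$. I will not invoke \cref{T: Vallee Poussin}(iii) directly, because $\varphi$ depends on $x$. Instead, I argue as in its proof. Set $\psi(t):=\operatorname*{ess\,inf}_{x}\varphi(x,t)/t$, which tends to $\infty$ by uniform superlinearity. Fix $M>0$ and choose $\lambda_M$ such that $\varphi(x,t)\ge Mt$ for a.e.\ $x$ and all $t\ge\lambda_M$. Here I use that $t\mapsto\varphi(x,t)/t$ is nondecreasing by convexity and $\varphi(x,0)=0$, so the bound at $\lambda_M$ propagates to all larger $t$ up to a null set; to keep this clean, I take $t$ rational and use continuity in $t$. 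Then
\[
\int_{\{\abs{\nabla u_n}>\lambda\}}\abs{\nabla u_n}\,dx\le \frac1M\int_\Omega\varphi(x,\abs{\nabla u_n})\,dx\le \frac CM \qquad\text{for all }\lambda\ge\lambda_M.
\]
This yields condition (ii) of \cref{T: Vallee Poussin} componentwise, uniformly in $n$. Taking $\lambda=\lambda_1$ gives $\sup_n\norm{\nabla u_n}_{L^1}\le \lambda_1\abs{\Omega}+C$. Dunford--Pettis, applied to each component, produces a further subsequence with $\nabla u_n\rightharpoonup g$ weakly in $L^1(\Omega;\R^N)$.

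\textbf{Step 3: strong $L^1$ convergence and identification.} By Steps 1 and 2, $(u_n)$ is bounded in $W^{1,1}(\Omega)\subset BV(\Omega)$. Since $\Omega$ is a bounded Lipschitz domain, the compact embedding of $BV(\Omega)$ into $L^1(\Omega)$ \cite[Chapter~3]{ambrosio2000functions} yields a subsequence with $u_n\to\tilde u$ strongly in $L^1(\Omega)$. Testing against $L^\infty$ functions, strong $L^1$ convergence and weak $L^2$ convergence force $\tilde u=u$. For $\phi\in C_c^\infty(\Omega)$,
\[
\int_\Omega u\,\partial_i\phi\,dx=\lim_n\int_\Omega u_n\,\partial_i\phi\,dx=-\lim_n\int_\Omega \partial_iu_n\,\phi\,dx=-\int_\Omega g_i\phi\,dx,
\]
using strong $L^1$ convergence on the left and weak $L^1$ convergence on the right, with $\phi,\partial_i\phi\in L^\infty$. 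Hence $\nabla u=g\in L^1$ and $u\in W^{1,1}(\Omega)\cap L^2(\Omega)$. All subsequences are nested, so the three convergences hold simultaneously.

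The only delicate point is Step 2. There the $x$-dependence of $\varphi$ prevents a direct use of a single convex de La Vallée Poussin function. This is handled by the uniform superlinearity hypothesis and the monotonicity of $\varphi(x,t)/t$, with care for the null sets through the countable dense set of $t$.
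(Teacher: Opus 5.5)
Your proof is correct and follows the paper's route. The steps match: an $L^2$ bound from the fidelity term, then the tail estimate $\int_{\{\abs{\nabla u_n}>\lambda\}}\abs{\nabla u_n}\,dx\le C/M$ from uniform superlinearity, which gives both the $W^{1,1}$ bound and, via Dunford--Pettis, weak $L^1$ compactness of the gradients, then compactness of $W^{1,1}$ (equivalently $BV$) in $L^1$, and finally identification of the limits by integrating by parts against $C_c^\infty(\Omega)$. Using the monotonicity of $t\mapsto\varphi(x,t)/t$ to get $\varphi(x,t)\ge Mt$ for all $t\ge\lambda_M$ outside a single null set is a small but welcome precision that the paper takes for granted.
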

\begin{proof}
    By the uniform superlinear growth condition, for every $M>0$ there exists $t_M>0$ such that
    \[
        \varphi(x,t)\geq Mt
        \qquad
        \text{for a.e. }x\in\Omega
        \text{ and all }t\geq t_M.
    \]
    Therefore, for every $n$,
    \[
        \begin{aligned}
        \int_\Omega\abs{\nabla u_n}\,dx
        &=
        \int_{\{\abs{\nabla u_n}<t_M\}}\abs{\nabla u_n}\,dx
        +
        \int_{\{\abs{\nabla u_n}\geq t_M\}}\abs{\nabla u_n}\,dx\\
        &\leq
        t_M\abs{\Omega}
        +
        \frac{1}{M}
        \int_\Omega\varphi(x,\abs{\nabla u_n})\,dx.
        \end{aligned}
    \]
    Taking $M=1$ and using the assumed modular bound gives
    \[
        \sup_n\norm{\nabla u_n}_{L^1(\Omega;\R^N)}<\infty.
    \]
    Since $f \in L^2(\Omega)$, $\{u_n\}_n$ is bounded in $L^2(\Omega)$ and since $\Omega$ is bounded, it is also bounded in $L^1(\Omega)$. Hence $\{u_n\}_n$ is bounded in $W^{1,1}(\Omega)$.

    By Rellich--Kondrachov, after passing to a subsequence,
    \[
        u_n\to u
        \qquad\text{strongly in }L^1(\Omega)
    \]
    for some $u\in L^1(\Omega)$. The $L^2$-bound gives, after passing to a further subsequence,
    \[
        u_n\rightharpoonup \tilde u
        \qquad\text{weakly in }L^2(\Omega).
    \]
    The weak $L^2$-limit and the strong $L^1$-limit agree by uniqueness of distributional limits, so $\tilde u=u$. Thus
    \[
        u_n\rightharpoonup u
        \qquad\text{weakly in }L^2(\Omega).
    \]

    It remains to prove weak $L^1$-compactness of the gradients. We first show uniform integrability. Let $M>0$, and choose $t_M>0$ as above. For every $\lambda\geq t_M$,
    \[
        \begin{aligned}
        \int_{\{\abs{\nabla u_n}>\lambda\}}\abs{\nabla u_n}\,dx
        &\leq
        \frac{1}{M}
        \int_{\{\abs{\nabla u_n}>\lambda\}}
        \varphi(x,\abs{\nabla u_n})\,dx\\
        &\leq
        \frac{1}{M}
        \int_\Omega
        \varphi(x,\abs{\nabla u_n})\,dx
        \leq
        \frac{C}{M}.
        \end{aligned}
    \]
    Taking the supremum over $n$ and then letting $\lambda\to\infty$ gives
    \[
        \limsup_{\lambda\to\infty}
        \sup_n
        \int_{\{\abs{\nabla u_n}>\lambda\}}
        \abs{\nabla u_n}\,dx
        \leq
        \frac{C}{M}.
    \]
    Since $M>0$ was arbitrary,
    \[
        \lim_{\lambda\to\infty}
        \sup_n
        \int_{\{\abs{\nabla u_n}>\lambda\}}
        \abs{\nabla u_n}\,dx
        =0.
    \]
    By \cref{T: Vallee Poussin}, equivalently by the tail characterization of uniform integrability, the family $\{\abs{\nabla u_n}\}_n$ is uniformly integrable.

    Since $\abs{\partial_i u_n}\leq\abs{\nabla u_n}$ for each component $i$, every family $\{\partial_i u_n\}_n$ is uniformly integrable. The previous $L^1$-bound also gives
    \[
        \sup_n\norm{\partial_i u_n}_{L^1(\Omega)}<\infty.
    \]
    By Dunford--Pettis, for each $i$ there is a subsequence and a function $g_i\in L^1(\Omega)$ such that
    \[
        \partial_i u_n\rightharpoonup g_i
        \qquad\text{weakly in }L^1(\Omega).
    \]
    Since there are only finitely many components, a diagonal extraction gives $g=(g_1,\dots,g_N)\in L^1(\Omega;\R^N)$ such that
    \[
        \nabla u_n\rightharpoonup g
        \qquad\text{weakly in }L^1(\Omega;\R^N).
    \]

    Finally, for every $\eta\in C_c^\infty(\Omega)$ and every $i\in\{1,\dots,N\}$, the strong $L^1$-convergence of $u_n$ and the weak $L^1$-convergence of $\partial_i u_n$ give
    \[
        \int_\Omega u\,\partial_i\eta\,dx
        =
        \lim_{n\to\infty}
        \int_\Omega u_n\,\partial_i\eta\,dx
        =
        -\lim_{n\to\infty}
        \int_\Omega \partial_i u_n\,\eta\,dx
        =
        -\int_\Omega g_i\eta\,dx.
    \]
    Hence $g_i=\partial_i u$ in the sense of distributions. Since $g_i\in L^1(\Omega)$ for every $i$, we have $u\in W^{1,1}(\Omega)$ and $\nabla u=g$. Therefore
    \[
        \nabla u_n\rightharpoonup\nabla u
        \qquad\text{weakly in }L^1(\Omega;\R^N),
    \]
    which proves the claim.
\end{proof}

\begin{remark}[lower semicontinuity]
    \label{R: lsc}
    The map
    \[
        u\mapsto\frac12\norm{u-f}_{L^2(\Omega)}^2
    \]
    is lower semicontinuous with respect to weak convergence in $L^2(\Omega)$. For the modular part, first note that by the lower semicontinuity of $z\mapsto\Phi(x,z)$ and Fatou's lemma, the modular
    \[
        v\mapsto \rho_\varphi(v)
        =
        \int_\Omega \Phi(x,v(x))\,dx
    \]
    is lower semicontinuous with respect to a.e. convergence along subsequences; see \cite[Lemma~3.1.4]{harjulehto2019generalized}. Consequently, if $v_n\to v$ strongly in $L^1(\Omega;\R^N)$, then after passing to a subsequence realizing the liminf, $v_n(x)\to v(x)$ for a.e. $x$, and hence
    \[
        \rho_\varphi(v)
        \leq
        \liminf_{n\to\infty}\rho_\varphi(v_n).
    \]
    Thus $\rho_\varphi$ is strongly lower semicontinuous on $L^1(\Omega;\R^N)$. Since $\rho_\varphi$ is convex, its epigraph is convex and strongly closed, hence weakly closed. Therefore $\rho_\varphi$ is weakly lower semicontinuous on $L^1(\Omega;\R^N)$.
\end{remark}

\begin{proposition}[existence of minimizers]
    \label{P: existence}
    There exists a unique minimizer of $\mathcal{F}$ over $X_\varphi$.
\end{proposition}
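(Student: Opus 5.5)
We argue by the direct method, with \cref{L: compactness} supplying compactness and \cref{R: lsc} supplying lower semicontinuity. First, $\mathcal F$ is proper and bounded below. Indeed, $0\in X_\varphi$ because $\varphi(x,0)=0$, so $\mathcal F(0)=\frac12\norm{f}_{L^2(\Omega)}^2<\infty$. Moreover $\mathcal F\geq0$, so $m:=\inf_{X_\varphi}\mathcal F$ is finite. Take a minimizing sequence $\{u_n\}_n\subset X_\varphi$ with $\mathcal F(u_n)\to m$. Discarding finitely many terms, we may assume $\sup_n\mathcal F(u_n)\leq m+1$.

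By \cref{L: compactness}, after passing to a subsequence there is $u\in L^2(\Omega)\cap W^{1,1}(\Omega)$ with $u_n\rightharpoonup u$ in $L^2(\Omega)$ and $\nabla u_n\rightharpoonup\nabla u$ in $L^1(\Omega;\R^N)$. By \cref{R: lsc}, the fidelity term is weakly lower semicontinuous in $L^2(\Omega)$. The modular $\rho_\varphi$ is convex and strongly lower semicontinuous on $L^1(\Omega;\R^N)$, hence weakly lower semicontinuous there. Applying each to its respective weakly convergent sequence and using $\liminf(a_n+b_n)\geq\liminf a_n+\liminf b_n$ gives
\[
    \frac12\norm{u-f}_{L^2(\Omega)}^2+\rho_\varphi(\nabla u)
    \leq\liminf_{n\to\infty}\mathcal F(u_n)=m<\infty .
\]
In particular $\rho_\varphi(\nabla u)<\infty$, so $u\in X_\varphi$ and $\mathcal F(u)\leq m$. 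Hence $u$ is a minimizer.

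The step needing the most care is the weak lower semicontinuity of $\rho_\varphi$. This is why we route it through Mazur's lemma, i.e.\ closed convex sets are weakly closed, as in \cref{R: lsc}. Weak $L^1$ convergence of $\nabla u_n$ gives no pointwise convergence, so Fatou's lemma cannot be applied directly. It is also important that membership $u\in X_\varphi$ follows from finiteness of the modular alone. No reflexivity of $L^\varphi$ or norm bound on $\nabla u_n$ in $L^\varphi$ is needed.

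For uniqueness, note that $X_\varphi$ is a linear space. Indeed, $\rho_\varphi$ is convex, so $\rho_\varphi(\tfrac{v+w}{2})\leq\tfrac12(\rho_\varphi(v)+\rho_\varphi(w))$, and by the $\Delta_2$-condition $\rho_\varphi(2v)\leq C_\Delta\rho_\varphi(v)$. Also, $\mathcal F$ is convex on $X_\varphi$, since the modular is convex and $u\mapsto\nabla u$ is linear. The fidelity term $u\mapsto\frac12\norm{u-f}_{L^2(\Omega)}^2$ is strictly convex on $L^2(\Omega)$, so $\mathcal F$ is strictly convex. Suppose $u_1\neq u_2$ are both minimizers. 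Then $\tfrac12(u_1+u_2)\in X_\varphi$ and $\mathcal F(\tfrac12(u_1+u_2))<\tfrac12(\mathcal F(u_1)+\mathcal F(u_2))=m$, a contradiction. Here $u_1\neq u_2$ as elements of $X_\varphi\subset L^2(\Omega)$, so they differ in $L^2$, which is exactly where the strict inequality comes from.
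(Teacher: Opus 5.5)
Your proposal is correct and follows essentially the same route as the paper. Both arguments take a minimizing sequence with bounded energy, use \cref{L: compactness} for weak $L^2$ convergence and weak $L^1$ convergence of the gradients, get lower semicontinuity and membership of the limit in $X_\varphi$ from \cref{R: lsc}, and derive uniqueness from strict convexity of the fidelity term; your extra checks (properness via $u=0$, and that the midpoint of two minimizers stays in $X_\varphi$) just make explicit what the paper leaves implicit.
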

\begin{proof}
    The direct method applies. Let $\{u_n\}_n\subset X_\varphi$ be a minimizing sequence. Since $\mathcal{F}$ is bounded from below, we may assume $\sup_n\mathcal{F}(u_n)\leq C$, and hence
    \[
        \sup_n\norm{u_n-f}_{L^2(\Omega)}^2\leq C,
        \qquad
        \sup_n\rho_\varphi(\nabla u_n)\leq C.
    \]
    Thus $\{u_n\}_n$ is bounded in $L^2(\Omega)$, and \cref{L: compactness} yields a subsequence and $\overline u\in L^2(\Omega)\cap W^{1,1}(\Omega)$ such that
    \[
        u_n\rightharpoonup\overline u\quad\text{in }L^2(\Omega),
        \qquad
        \nabla u_n\rightharpoonup\nabla\overline u\quad\text{in }L^1(\Omega;\R^N).
    \]
    By \cref{R: lsc},
    \[
        \rho_\varphi(\nabla\overline u)
        \leq\liminf_{n\to\infty}\rho_\varphi(\nabla u_n)<\infty,
    \]
    so $\overline u\in X_\varphi$, and again by lower semicontinuity,
    \[
        \mathcal{F}(\overline u)
        \leq\liminf_{n\to\infty}\mathcal{F}(u_n)
        =\inf_{u\in X_\varphi}\mathcal{F}(u).
    \]
    Uniqueness follows from the strict convexity of the fidelity term on $L^2(\Omega)$ and the convexity of the integral term.
\end{proof}


\subsection{Duality in Orlicz spaces}
\label{s:3.2}

\begin{definition}[pointwise conjugate integrand]
    For almost every $x\in\Omega$, define
    \[
        \varphi^*(x,s):=\sup_{t\geq0}\{st-\varphi(x,t)\},
        \qquad s\geq0.
    \]
    Equivalently, for $\Phi(x,z)=\varphi(x,\abs{z})$, define
    \[
        \Phi^*(x,\xi):=\sup_{z\in\R^N}\{\xi\cdot z-\Phi(x,z)\}.
    \]
\end{definition}
Notice that
\[
        \Phi^*(x,\xi)=\varphi^*(x,\abs{\xi}).
\]
since
\[
    \sup_{z\in\R^N}\{\xi\cdot z-\varphi(x,\abs{z})\}
    =\sup_{t\geq0}\{t\abs{\xi}-\varphi(x,t)\}.
\]

\begin{remark}[Young's inequality]
    The pointwise conjugate gives
    \[
        st\leq\varphi(x,t)+\varphi^*(x,s)
        \qquad\text{for a.e. }x\in\Omega\text{ and all }s,t\geq0.
    \]
\end{remark}

We write
\[
    \rho_{\varphi^*}(p):=\int_\Omega\varphi^*(x,\abs{p(x)})\,dx,
    \qquad
    L^{\varphi^*}(\Omega;\R^N)
    :=
    \left\{p\in L^0(\Omega;\R^N)\st
    \rho_{\varphi^*}(\lambda p)<\infty
    \text{ for some }\lambda>0\right\},
\]
with the corresponding Luxemburg norm $\norm{p}_{\varphi^*}$.
Since $\varphi^*$ need not satisfy a $\Delta_2$-condition, the Orlicz space $L^{\varphi^*}$ must be distinguished from the finite-modular class
\[
    \operatorname{dom}\rho_{\varphi^*}
    :=
    \left\{p\in L^{\varphi^*}(\Omega;\R^N)\st
    \rho_{\varphi^*}(p)<\infty\right\}.
\]
The conjugate functional $\rho_\varphi^*$ is finite precisely on $\operatorname{dom}\rho_{\varphi^*}$ and equals $+\infty$ otherwise.

\begin{lemma}[duality pairing and dual identification]
    \label{L: dual space}
    Assume that $\varphi$ satisfies the standing assumptions of \cref{s:3}. Then the associate space of $L^\varphi(\Omega;\R^N)$, in the sense of \cite[Definition~3.4.5]{harjulehto2019generalized}, is $L^{\varphi^*}(\Omega;\R^N)$, with duality pairing
    \[
        \langle p,v\rangle
        :=
        \int_\Omega p\cdot v\,dx.
    \]
    Since $\varphi$ satisfies the uniform $\Delta_2$-condition, $L^\varphi(\Omega;\R^N)$ has order-continuous norm. Consequently, the norm dual can be identified with its associate space, and hence
    \[
        \bigl(L^\varphi(\Omega;\R^N)\bigr)^*
        \simeq
        L^{\varphi^*}(\Omega;\R^N)
    \]
    through the above pairing. In particular,
    \[
        \int_\Omega \abs{p\cdot v}\,dx
        \leq
        2\norm{p}_{\varphi^*}\norm{v}_{\varphi}
    \]
    for all $v\in L^\varphi(\Omega;\R^N)$ and $p\in L^{\varphi^*}(\Omega;\R^N)$.
\end{lemma}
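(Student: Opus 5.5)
The plan is to prove the lemma in three steps: Hölder's inequality for the pairing, identification of the associate space, and then density plus a Radon--Nikodym argument to represent every functional.

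\textbf{Hölder's inequality.} We first establish the stated inequality. Take $v\in L^\varphi(\Omega;\R^N)$ and $p\in L^{\varphi^*}(\Omega;\R^N)$ with $v,p\neq0$, and set $\lambda=\norm{v}_\varphi$, $\mu=\norm{p}_{\varphi^*}$. The integrands are left-continuous and convex, so the unit ball property $\rho_\varphi(v/\lambda)\leq1$ and $\rho_{\varphi^*}(p/\mu)\leq1$ holds; this follows from Fatou's lemma by letting the Luxemburg parameter decrease to the norm. Combining Cauchy--Schwarz, $\abs{p\cdot v}\leq\abs{p}\abs{v}$, with the pointwise Young inequality of the preceding remark gives
\[
\int_\Omega\frac{\abs{p\cdot v}}{\lambda\mu}\,dx\leq\rho_\varphi(v/\lambda)+\rho_{\varphi^*}(p/\mu)\leq2 .
\]
Hence every $p\in L^{\varphi^*}$ defines a bounded functional $\ell_p(v)=\int p\cdot v$ with $\norm{\ell_p}\leq2\norm{p}_{\varphi^*}$.

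\textbf{Associate space.} Next we show that the associate space, namely the measurable $p$ with $\sup\{\int\abs{p\cdot v}:\norm{v}_\varphi\leq1\}<\infty$, equals $L^{\varphi^*}$. The inclusion of $L^{\varphi^*}$ is the Hölder step above. For the reverse inclusion, and for a lower norm bound, we use the standard duality argument of \cite[Section~3.4]{harjulehto2019generalized}, applied pointwise in $x$.
\begin{itemize}
\item Truncate $p$ to $p_k=p\chi_{\{\abs{p}\leq k\}\cap\{\abs{x}\le k\}}$.
\item Choose $v=t(x)\,p_k/\abs{p_k}$, where $t(x)$ nearly attains the supremum in $\varphi^*(x,\abs{p_k(x)})$. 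This requires a measurable selection, which exists because $\varphi$ is Carathéodory and the supremum can be taken over rational $t$.
\item The resulting test functions show that $\rho_{\varphi^*}(p_k/c)$ is bounded, where $c$ is proportional to the associate norm.
\item Monotone convergence in $k$ then gives $p\in L^{\varphi^*}$.
\end{itemize}

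\textbf{Dual identification.} Finally we show every $\ell\in(L^\varphi)^*$ is of the form $\ell_p$. The uniform $\Delta_2$-condition gives $L^\varphi=\{\rho_\varphi<\infty\}$, and together with $\varphi(\cdot,t)\in L^1$ and dominated convergence it shows that $\rho_\varphi(v-v_k)\to0$ for the truncations $v_k=v\chi_{\{\abs{v}\leq k\}}$. Modular convergence implies norm convergence under $\Delta_2$, which yields order continuity of the norm and density of simple functions. For each component $i$, the set function $E\mapsto\ell(\chi_E e_i)$ is countably additive by order continuity and absolutely continuous with respect to $\mathcal L^N$. Radon--Nikodym then gives $p_i\in L^1(\Omega)$ with $\ell(\chi_E e_i)=\int_E p_i$. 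Thus $\ell=\ell_p$ on simple functions, and boundedness of $\ell$ shows $p$ belongs to the associate space, hence to $L^{\varphi^*}$. Density then extends $\ell=\ell_p$ to all of $L^\varphi$. Injectivity of $p\mapsto\ell_p$ follows by testing against $\chi_E e_i$.

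\textbf{Main obstacle.} The hardest step is the measurable near-maximizer construction in the associate-space identification, since the $x$-dependence of $\varphi$ demands a Carathéodory selection rather than the scalar Orlicz argument. I would handle it by taking the supremum over rational $t$ and using a countable-partition choice.
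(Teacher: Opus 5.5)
Your proposal is correct and follows the same skeleton as the paper's proof: order continuity from the uniform $\Delta_2$-condition via dominated convergence, the set functions $E\mapsto\ell(\chi_E e_i)$ with Radon--Nikodym, density of simple functions, and the identification of the associate space with $L^{\varphi^*}$. The differences are minor: you prove H\"older's inequality and the associate-space identification directly (via a measurable near-maximizer, the same device the paper later uses for the conjugate of the modular) instead of citing \cite[Theorem~3.4.6]{harjulehto2019generalized}, you use the automatic finiteness of a real-valued countably additive set function in place of the paper's explicit variation bound, and you establish $p\in L^{\varphi^*}$ before extending $\ell=\ell_p$ by density, which orders the steps slightly more carefully than the paper does.
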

\begin{proof}
    By \cite[Definition~3.4.5 and Theorem~3.4.6]{harjulehto2019generalized}, the associate space of
    $L^\varphi(\Omega;\mathbb R^N)$ is $L^{\varphi^*}(\Omega;\mathbb R^N)$, with comparable norms. This gives the integral pairing and the H\"older inequality.

    We first show that the Luxemburg norm is order continuous under the present assumptions. Let
    \[
        0\leq \abs{v_k}\leq \abs{v},
        \qquad
        v_k\to0
        \quad\text{a.e.},
    \]
    with $v\in L^\varphi(\Omega;\mathbb R^N)$. Since $v\in L^\varphi$, there exists $\lambda_0>0$ such that
    \[
        \rho_\varphi(\lambda_0v)<\infty.
    \]
    By the uniform $\Delta_2$-condition, this implies
    \[
        \rho_\varphi(\lambda v)<\infty
        \qquad
        \text{for every }\lambda>0.
    \]
    For fixed $\lambda>0$, we have
    \[
        0\leq
        \varphi(x,\lambda\abs{v_k(x)})
        \leq
        \varphi(x,\lambda\abs{v(x)})
        \in L^1(\Omega),
    \]
    and the integrand converges pointwise to zero. By dominated convergence,
    \[
        \rho_\varphi(\lambda v_k)\to0.
    \]
    Taking $\lambda=1/\varepsilon$, we find that for all sufficiently large $k$,
    \[
        \rho_\varphi(v_k/\varepsilon)\leq1.
    \]
    Thus $\norm{v_k}_\varphi\leq\varepsilon$. Since $\varepsilon>0$ was arbitrary, $\norm{v_k}_\varphi\to0$. This proves order continuity.

    It remains to exclude singular functionals. Let $\ell\in (L^\varphi(\Omega;\mathbb R^N))^*$. For $i=1,\dots,N$, define
    \[
        \nu_i(E):=\ell(\chi_E e_i)
    \]
    for measurable $E\subset\Omega$. This is well-defined because $\varphi(\cdot,1)\in L^1(\Omega)$, and therefore $\chi_E e_i\in L^\varphi(\Omega;\mathbb R^N)$. The linearity of $\ell$ implies finite additivity.

    We next prove countable additivity. Let $(E_j)_{j\in\mathbb N}$ be pairwise disjoint and set
    \[
        E:=\bigcup_{j=1}^\infty E_j,
        \qquad
        E^{(m)}:=\bigcup_{j=1}^m E_j.
    \]
    Then
    \[
        \chi_{E^{(m)}}e_i\to\chi_Ee_i
        \quad\text{a.e.}
    \]
    and
    \[
        \abs{\chi_Ee_i-\chi_{E^{(m)}}e_i}
        =
        \chi_{E\setminus E^{(m)}}.
    \]
    Since $E\setminus E^{(m)}\downarrow\emptyset$, order continuity of the Luxemburg norm gives
    \[
        \norm{\chi_Ee_i-\chi_{E^{(m)}}e_i}_\varphi\to0.
    \]
    By the continuity of $\ell$,
    \[
        \nu_i(E)-\nu_i(E^{(m)})
        =
        \ell((\chi_E-\chi_{E^{(m)}})e_i)
        \to0.
    \]
    Using finite additivity for $E^{(m)}$, we obtain
    \[
        \nu_i(E)
        =
        \lim_{m\to\infty}\sum_{j=1}^m\nu_i(E_j)
        =
        \sum_{j=1}^\infty\nu_i(E_j).
    \]
    Thus $\nu_i$ is countably additive.

    Moreover, if $\abs{E}=0$, then $\chi_Ee_i=0$ in $L^\varphi(\Omega;\mathbb R^N)$, and hence $\nu_i(E)=0$. Therefore $\nu_i\ll\mathcal L^N$.

    Finally, we show finite variation explicitly. For every finite measurable partition $(E_j)_{j=1}^m$ of $\Omega$, choose signs $\varepsilon_j\in\{-1,1\}$ such that
    \[
        \varepsilon_j\nu_i(E_j)=\abs{\nu_i(E_j)}.
    \]
    Then
    \[
        \sum_{j=1}^m\abs{\nu_i(E_j)}
        =
        \ell\left(\sum_{j=1}^m\varepsilon_j\chi_{E_j}e_i\right)
        \leq
        \norm{\ell}
        \left\|
        \sum_{j=1}^m\varepsilon_j\chi_{E_j}e_i
        \right\|_\varphi.
    \]
    Since
    \[
        \left|
        \sum_{j=1}^m\varepsilon_j\chi_{E_j}e_i
        \right|
        =
        \chi_\Omega,
    \]
    the lattice property of the Luxemburg norm gives
    \[
        \left\|
        \sum_{j=1}^m\varepsilon_j\chi_{E_j}e_i
        \right\|_\varphi
        =
        \norm{\chi_\Omega e_i}_\varphi
        <
        \infty.
    \]
    Hence
    \[
        \abs{\nu_i}(\Omega)
        \leq
        \norm{\ell}\,\norm{\chi_\Omega e_i}_\varphi
        <
        \infty.
    \]
    Thus $\nu_i$ is a finite signed measure. Since $\nu_i\ll\mathcal L^N$, the Radon--Nikodym theorem gives $p_i\in L^1(\Omega)$ such that
    \[
        \nu_i(E)=\int_E p_i\,dx
    \]
    for every measurable $E\subset\Omega$.

    Thus, for $p=(p_1,\dots,p_N)$,
    \[
        \ell(v)=\int_\Omega p\cdot v\,dx
    \]
    for all simple vector fields $v$. By \cite[Theorem~3.5.1]{harjulehto2019generalized}, simple functions in $L^\varphi$ are dense in $L^\varphi$. Therefore the representation extends to all $v\in L^\varphi(\Omega;\mathbb R^N)$. Since $\ell$ is bounded, $p$ belongs to the associate space of $L^\varphi(\Omega;\mathbb R^N)$, and hence, by \cite[Theorem~3.4.6]{harjulehto2019generalized},
    \[
        p\in L^{\varphi^*}(\Omega;\mathbb R^N).
    \]
    This proves the norm-dual identification.
\end{proof}

By definition, the Fenchel conjugate of $\rho_\varphi$ on the associated space is
\[
    \rho_\varphi^*(p):=\sup_{v\in L^\varphi(\Omega;\R^N)}
    \left\{\int_\Omega p\cdot v\,dx-\rho_\varphi(v)\right\}.
\]
Just as in Lebesgue spaces, this conjugate can be expressed pointwise almost everywhere via the conjugate of the modular.
\begin{proposition}[Fenchel conjugate of the modular]
    \label{P: dual modular}
    For $p\in L^{\varphi^*}(\Omega;\R^N)$,
    \[
        \rho_\varphi^*(p)
        =\int_\Omega\Phi^*(x,p(x))\,dx
        =\int_\Omega\varphi^*(x,\abs{p(x)})\,dx.
    \]
    In particular, $\rho_\varphi^*(p)<\infty$ if and only if $p\in\operatorname{dom}\rho_{\varphi^*}$.
\end{proposition}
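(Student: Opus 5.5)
We prove the two inequalities
\[
    \rho_\varphi^*(p)\leq\int_\Omega\Phi^*(x,p(x))\,dx
    \qquad\text{and}\qquad
    \rho_\varphi^*(p)\geq\int_\Omega\Phi^*(x,p(x))\,dx
\]
separately, for a fixed $p\in L^{\varphi^*}(\Omega;\R^N)$. The identity $\Phi^*(x,\xi)=\varphi^*(x,\abs{\xi})$ noted after the definition of the pointwise conjugate then gives the second equality. The final statement, that $\rho_\varphi^*(p)<\infty$ if and only if $p\in\operatorname{dom}\rho_{\varphi^*}$, is then immediate from the definition of $\operatorname{dom}\rho_{\varphi^*}$.

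\textbf{Upper bound.} For every $v\in L^\varphi(\Omega;\R^N)$, Young's inequality in its vectorial form $p(x)\cdot v(x)\leq\Phi(x,v(x))+\Phi^*(x,p(x))$ holds pointwise. The product $p\cdot v$ is integrable by \cref{L: dual space}, and $\Phi(\cdot,v)\in L^1$ because $\rho_\varphi(v)<\infty$ by the $\Delta_2$-condition. Integrating therefore gives $\int p\cdot v-\rho_\varphi(v)\leq\int\Phi^*(x,p)\,dx$, where the right-hand side may be $+\infty$. Taking the supremum over $v$ yields the upper bound.

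\textbf{Lower bound.} This is the main step, a measurable-selection and truncation argument. First, $\Phi^*(x,\cdot)$ is the supremum of $\xi\cdot z-\Phi(x,z)$ over a countable dense set $\{z_k\}\subset\R^N$, because $\Phi(x,\cdot)$ is continuous. Set
\[
    g_m(x):=\max_{k\leq m}\bigl\{p(x)\cdot z_k-\Phi(x,z_k)\bigr\}.
\]
Then $g_m\uparrow\Phi^*(x,p(x))$, and $g_m\geq p\cdot z_1-\Phi(\cdot,z_1)$ with $z_1=0$ gives $g_m\geq0$. Choosing the smallest maximizing index defines a measurable field $v_m(x):=z_{k(x)}$ with $\abs{v_m}\leq R_m:=\max_{k\leq m}\abs{z_k}$. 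This field is bounded, so $\rho_\varphi(v_m)\leq\int\varphi(x,R_m)\,dx<\infty$ because $\varphi(\cdot,t)\in L^1$. Hence $v_m\in L^\varphi$. Moreover $p\cdot v_m\in L^1$, since $p\in L^{\varphi^*}\subset L^1$, which follows from bounded $v$ lying in $L^\varphi$ and \cref{L: dual space}. By definition of the conjugate,
\[
    \rho_\varphi^*(p)\geq\int_\Omega\bigl(p\cdot v_m-\Phi(x,v_m)\bigr)\,dx=\int_\Omega g_m\,dx.
\]
Monotone convergence, applicable since $g_m\geq0$ and increasing, then gives $\rho_\varphi^*(p)\geq\int\Phi^*(x,p)\,dx$, including the case where the right-hand side is $+\infty$.

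\textbf{Technical points to verify.} The main obstacle is the measurability in the lower bound. One must check that $x\mapsto\Phi(x,z_k)$ is measurable for each fixed $z_k$, which holds by the standing assumption, and that $\Phi^*(\cdot,p(\cdot))$ is measurable. The latter holds because, as a countable supremum of Carathéodory functions, $\Phi^*$ is a normal integrand. The countable-density reduction needs continuity of $t\mapsto\varphi(x,t)$, which is assumed. No $\Delta_2$-condition on $\varphi^*$ is needed, because the test fields are bounded.
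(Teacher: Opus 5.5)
Your proof is correct and takes essentially the same route as the paper. The upper bound integrates the pointwise Fenchel--Young inequality, and the lower bound uses a finite-maximum measurable selection (smallest maximizing index) that yields bounded, hence admissible, test fields, followed by monotone convergence. The only difference is that the paper uses the radial structure: it selects a magnitude $t_{R,m}(x)$ on dyadic grids of $[0,R]$ along the direction $p/\abs{p}$ and lets $m\to\infty$ and then $R\to\infty$. You instead enumerate a countable dense subset of $\R^N$ containing $0$, which merges the two limits into one and does not need radiality.
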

\begin{proof}
    The following argument is a direct radial specialization of the standard conjugacy formula for convex integral functionals with normal integrands in~\cite[Theorem~2]{rockafellar1968integrals}.

    We first compute the pointwise conjugate. For a.e. $x\in\Omega$ and every
    $\xi\in\R^N$, we have
    \[
        \Phi^*(x,\xi)
        =
        \sup_{z\in\R^N}
        \{\xi\cdot z-\varphi(x,|z|)\}
        =
        \sup_{t\ge 0}
        \{\abs{\xi}t-\varphi(x,t)\}
        =
        \varphi^*(x,\abs{\xi}).
    \]
    Indeed, the inequality ``$\le$'' follows from
    \[
        \xi\cdot z\le \abs{\xi}\,\abs{z},
    \]
    while the reverse inequality follows, for $\xi\neq0$, by choosing
    $z=t\xi/\abs{\xi}$. The case $\xi=0$ is immediate.

    Let now $p\in L^{\varphi^*}(\Omega;\R^N)$. For every
    $v\in L^\varphi(\Omega;\R^N)$, the pointwise Fenchel inequality gives
    \[
        p(x)\cdot v(x)-\Phi(x,v(x))
        \le
        \Phi^*(x,p(x))
        =
        \varphi^*(x,\abs{p(x)})
        \qquad\text{for a.e. }x\in\Omega.
    \]
    After integration and taking the supremum over $v$, we obtain
    \[
        \rho_\varphi^*(p)
        \le
        \int_\Omega \varphi^*(x,\abs{p})\,dx.
    \]

    It remains to prove the reverse inequality. For $R>0$, define
    \[
        M_R(x)
        :=
        \sup_{0\le t\le R}
        \left\{
            \abs{p(x)}t-\varphi(x,t)
        \right\}.
    \]
    For $m\in\N$, let
    \[
        D_{R,m}
        :=
        \left\{
            \frac{jR}{2^m}: j=0,\dots,2^m
        \right\}
    \]
    and set
    \[
        M_{R,m}(x)
        :=
        \max_{t\in D_{R,m}}
        \left\{
            \abs{p(x)}t-\varphi(x,t)
        \right\}.
    \]
    Since $D_{R,m}$ is finite and $x\mapsto\varphi(x,t)$ is measurable for every
    fixed $t\ge0$, the function $M_{R,m}$ is measurable. Moreover,
    $0\in D_{R,m}$, and hence
    \[
        M_{R,m}(x)\ge0.
    \]

    Choose $t_{R,m}(x)\in D_{R,m}$ as the smallest grid point where the maximum
    defining $M_{R,m}(x)$ is attained. This gives a measurable function
    $t_{R,m}$. Indeed, if
    \[
        D_{R,m}=\{s_0,\dots,s_{2^m}\},
    \]
    then the sets on which $t_{R,m}=s_j$ are obtained by finite intersections and
    unions of measurable sets of the form
    \[
        \left\{
            \abs{p}s_j-\varphi(\cdot,s_j)
            \ge
            \abs{p}s_k-\varphi(\cdot,s_k)
        \right\}.
    \]

    Define
    \[
        e_p(x)
        :=
        \begin{cases}
            \dfrac{p(x)}{\abs{p(x)}}, & p(x)\neq0,\\[1ex]
            e_1, & p(x)=0,
        \end{cases}
    \]
    where $e_1$ is the first coordinate vector in $\R^N$, and set
    \[
        v_{R,m}(x):=t_{R,m}(x)e_p(x).
    \]
    Then $v_{R,m}$ is measurable and satisfies
    \[
        \abs{v_{R,m}(x)}=t_{R,m}(x)\le R.
    \]
    Therefore
    \[
        \rho_\varphi(v_{R,m})
        =
        \int_\Omega \varphi(x,\abs{v_{R,m}})\,dx
        \le
        \int_\Omega \varphi(x,R)\,dx
        <\infty,
    \]
    by the standing assumption that $\varphi(\cdot,R)\in L^1(\Omega)$. Hence
    \[
        v_{R,m}\in L^\varphi(\Omega;\R^N).
    \]
    For this admissible test field, we have pointwise
    \[
        p(x)\cdot v_{R,m}(x)-\Phi(x,v_{R,m}(x))
        =
        \abs{p(x)}t_{R,m}(x)-\varphi(x,t_{R,m}(x))
        =
        M_{R,m}(x).
    \]
    Thus, by the definition of the conjugate functional,
    \[
        \rho_\varphi^*(p)
        \ge
        \int_\Omega M_{R,m}(x)\,dx .
    \]

    For fixed $R>0$, the grids $D_{R,m}$ are increasing in $m$ and their union
    is dense in $[0,R]$. Since
    \[
        t\mapsto \abs{p(x)}t-\varphi(x,t)
    \]
    is continuous on $[0,R]$ for a.e. $x$, we have
    \[
        M_{R,m}(x)\uparrow M_R(x)
        \qquad\text{as }m\to\infty.
    \]
    By the monotone convergence theorem,
    \[
        \rho_\varphi^*(p)
        \ge
        \int_\Omega M_R(x)\,dx .
    \]
    Finally,
    \[
        M_R(x)\uparrow
        \sup_{t\ge0}
        \left\{
            \abs{p(x)}t-\varphi(x,t)
        \right\}
        =
        \varphi^*(x,\abs{p(x)})
        \qquad\text{as }R\to\infty.
    \]
    A second application of the monotone convergence theorem yields
    \[
        \rho_\varphi^*(p)
        \ge
        \int_\Omega \varphi^*(x,\abs{p})\,dx.
    \]
    Combining both inequalities gives
    \[
        \rho_\varphi^*(p)
        =
        \int_\Omega \varphi^*(x,\abs{p})\,dx =
        \int_\Omega \Phi^*(x,p(x))\,dx .
    \]
    In particular, this identity is understood in the extended-valued sense: the
    conjugate is finite precisely on the finite-modular class
    \[
        \operatorname{dom}\rho_{\varphi^*}
        =
        \left\{
            p\in L^{\varphi^*}(\Omega;\R^N):
            \int_\Omega \varphi^*(x,\abs{p})\,dx<\infty
        \right\}.
        \qedhere
    \]
\end{proof}

\begin{corollary}[subdifferential of the modular]
    \label{C: subdiff representation}
    For $v\in L^\varphi(\Omega;\R^N)$,
    \[
        \partial\rho_\varphi(v)
        =\left\{p\in L^{\varphi^*}(\Omega;\R^N)\st
        p(x)\in\partial_z\Phi(x,v(x))\ \text{for a.e. }x\in\Omega\right\}.
    \]
\end{corollary}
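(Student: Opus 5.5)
The plan is to characterize subgradients through the Fenchel--Young equality and then localize it using the conjugate formula of \cref{P: dual modular}. By \cref{L: dual space}, $(L^\varphi(\Omega;\R^N))^*\simeq L^{\varphi^*}(\Omega;\R^N)$ via the integral pairing, so every subgradient is an element $p\in L^{\varphi^*}$. Since $\rho_\varphi$ is convex and finite on $L^\varphi$ (by the uniform $\Delta_2$-condition), the general convex-analytic characterization gives
\[
    p\in\partial\rho_\varphi(v)
    \iff
    \rho_\varphi(v)+\rho_\varphi^*(p)=\int_\Omega p\cdot v\,dx .
\]
Combining this with \cref{P: dual modular} turns the condition into
\[
    \int_\Omega\Bigl(\Phi(x,v(x))+\Phi^*(x,p(x))-p(x)\cdot v(x)\Bigr)\,dx=0 .
\]
Here the integrability of $p\cdot v$ follows from the Hölder inequality in \cref{L: dual space}, and $\rho_\varphi(v)<\infty$ holds by assumption.

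For the inclusion ``$\subseteq$'', I use that the integrand above is nonnegative a.e. by the pointwise Young inequality. If $p\in\partial\rho_\varphi(v)$, then $\rho_\varphi^*(p)<\infty$ is forced by the equality, so all three terms are integrable. An a.e. nonnegative function with zero integral vanishes a.e., which gives
\[
    \Phi(x,v(x))+\Phi^*(x,p(x))=p(x)\cdot v(x)\qquad\text{for a.e. }x .
\]
By the pointwise Fenchel--Young equality for the proper convex lsc function $\Phi(x,\cdot)$, this is equivalent to $p(x)\in\partial_z\Phi(x,v(x))$.

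For the inclusion ``$\supseteq$'', I run the same argument in reverse. If $p\in L^{\varphi^*}$ and $p(x)\in\partial_z\Phi(x,v(x))$ a.e., then the pointwise equality holds a.e. Hence
\[
    \Phi^*(x,p)=p\cdot v-\Phi(x,v)\le |p\cdot v| ,
\]
and the right-hand side is integrable. Integrating the pointwise equality yields the Fenchel--Young equality for the functionals, so $p\in\partial\rho_\varphi(v)$. Alternatively, one can verify the subgradient inequality directly by integrating $\Phi(x,w)-\Phi(x,v)\ge p\cdot(w-v)$ over $\Omega$ for each $w\in L^\varphi$.

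The main obstacle is the bookkeeping of integrability, since $\varphi^*$ fails $\Delta_2$ and membership $p\in L^{\varphi^*}$ does not by itself give $\rho_{\varphi^*}(p)<\infty$. The argument must therefore extract finiteness of $\rho^*_\varphi(p)$ from the equality, or from the domination by $|p\cdot v|$. Beyond that, it must justify that the pointwise conjugate used is the correct one for a.e. $x$; this is supplied by \cref{P: dual modular} and by the convexity and continuity of $\varphi(x,\cdot)$.
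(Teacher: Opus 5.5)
Your proposal is correct and follows the paper's proof essentially step for step: the Fenchel--Young equality for $\rho_\varphi$, rewritten via \cref{P: dual modular} as the vanishing integral of the nonnegative pointwise Fenchel--Young gap, then localized to the a.e.\ pointwise equality, with the converse obtained by reversing the argument. Your explicit integrability bookkeeping fills in details the paper leaves implicit: in one direction, finiteness of $\rho_\varphi^*(p)$ is forced by the equality; in the other, it follows from the bound $\Phi^*(x,p)\le\abs{p\cdot v}$.
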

\begin{proof}
    By the Fenchel equality for the modular, $p\in\partial\rho_\varphi(v)$ is equivalent to
    \[
        \rho_\varphi(v)+\rho_\varphi^*(p)
        =
        \int_\Omega p\cdot v\,dx.
    \]
    Using \cref{P: dual modular}, this identity can be written as
    \[
        \int_\Omega
        \left(
            \Phi(x,v(x))+\Phi^*(x,p(x))-p(x)\cdot v(x)
        \right)\,dx=0.
    \]
    The integrand is nonnegative by the pointwise Fenchel inequality. Hence the integral vanishes if and only if
    \[
        \Phi(x,v(x))+\Phi^*(x,p(x))=p(x)\cdot v(x)
    \]
    for almost every $x\in\Omega$. This pointwise Fenchel equality is equivalent to
    \[
        p(x)\in\partial_z\Phi(x,v(x))
        \qquad\text{for a.e. }x\in\Omega,
    \]
    which gives the stated characterization. The converse follows by reversing the same argument.
\end{proof}


\subsection{Optimality conditions}
\label{s:3.3}

Since \cref{P: existence} yields a minimizer of $\mathcal{F}$, we can analyze the dual problem and optimality conditions from \cref{T: Fenchel.Rockafeller}. In the notation of \cref{s:2.2}, we take
\begin{itemize}
    \item $X=X_\varphi$ and $Y=L^\varphi(\Omega;\R^N)$,
    \item $A:X_\varphi\to Y$, $Au:=\nabla u$,
    \item $F_0(u):=\frac12\norm{u-f}_{L^2(\Omega)}^2$, and $G(v):=\rho_\varphi(v)$.
\end{itemize}

Before applying Fenchel--Rockafellar duality, we verify the qualification condition. \cref{P: existence} gives existence of a primal minimizer. Moreover, $0\in X_\varphi$,
\[
    F_0(0)=\frac12\norm{f}_{L^2(\Omega)}^2<\infty,
    \qquad
    G(A(0))=\rho_\varphi(0)=0<\infty.
\]
Since $G$ is finite on $Y=L^\varphi(\Omega;\R^N)$, we have
\[
    \operatorname{dom}G=Y
    \qquad\text{and hence}\qquad
    A(0)=0\in(\operatorname{dom}G)^\circ.
\]
Thus the Fenchel--Rockafellar theorem applies, and the dual problem admits a solution.

\begin{lemma}[dual problem]
    \label{L: dual problem for Orlicz}
    The Fenchel dual problem corresponding to minimization of $\mathcal{F}=F_0+G\circ A$ is
    \[
        \sup_{p\in K_\varphi}
        \left\{
        -\frac12\norm{\operatorname{div}p}_{L^2(\Omega)}^2
        -\int_\Omega f\,\operatorname{div}p\,dx
        -\int_\Omega\varphi^*(x,\abs{p(x)})\,dx
        \right\},
    \]
    where
    \[
        K_\varphi:=\left\{p\in L^{\varphi^*}(\Omega;\R^N)\st
        \operatorname{div}p\in L^2(\Omega)
        \text{ and }
        \int_\Omega p\cdot\nabla u\,dx
        =
        -\int_\Omega (\operatorname{div}p)u\,dx
        \text{ for all }u\in X_\varphi
        \right\}.
    \]
    The integration-by-parts identity is the weak formulation of the homogeneous normal boundary condition $p\cdot\nu=0$ on $\partial\Omega$.
    This problem admits a maximizer.
\end{lemma}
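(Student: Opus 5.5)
Within the Fenchel--Rockafellar framework of \cref{T: Fenchel.Rockafeller}, with $X=X_\varphi$, $Y=L^\varphi(\Omega;\R^N)$, $F=F_0$ and $G=\rho_\varphi$, the dual problem reads
\[
    \sup_{p\in Y^*}\bigl\{-F_0^*(-A^*p)-\rho_\varphi^*(p)\bigr\}.
\]
By \cref{L: dual space}, $Y^*$ is identified with $L^{\varphi^*}(\Omega;\R^N)$ through the integral pairing. By \cref{P: dual modular}, $\rho_\varphi^*(p)=\int_\Omega\varphi^*(x,\abs{p})\,dx$. What remains is to compute $F_0^*(-A^*p)$ and to show that it is finite exactly when $p\in K_\varphi$, where it equals $\frac12\norm{\operatorname{div}p}_{L^2}^2+\int_\Omega f\operatorname{div}p\,dx$. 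Existence of a maximizer comes directly from \cref{T: Fenchel.Rockafeller}, whose hypotheses were verified just before the lemma: a primal minimizer exists by \cref{P: existence}, and $\operatorname{dom}G=Y$. A maximizer of the dual in $Y^*$ is, by the computation below, a maximizer over $K_\varphi$.

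The first step treats the case $p\in K_\varphi$. Writing $w:=\operatorname{div}p\in L^2(\Omega)$, the integration-by-parts identity gives, for every $u\in X_\varphi$,
\[
    \langle -A^*p,u\rangle=-\int_\Omega p\cdot\nabla u\,dx=\int_\Omega w\,u\,dx .
\]
Hence
\[
    F_0^*(-A^*p)=\sup_{u\in X_\varphi}\Bigl\{\int_\Omega wu\,dx-\tfrac12\norm{u-f}_{L^2}^2\Bigr\}.
\]
Over all of $L^2$ this supremum equals $\frac12\norm{w}^2+\int_\Omega fw\,dx$, attained at $u=f+w$. Restricting to $X_\varphi$ does not lower it, because $X_\varphi$ is dense in $L^2(\Omega)$: it contains $C^\infty(\overline\Omega)$, since $\varphi(\cdot,t)\in L^1$ and $\varphi$ is increasing, so bounded gradients have finite modular. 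The functional is $L^2$-continuous, so the supremum over the dense set coincides with the one over $L^2$. This yields exactly the integrand of the stated dual.

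The second step is the converse: if $F_0^*(-A^*p)<\infty$, then $p\in K_\varphi$. Let $\ell:=-A^*p$, so $\ell(u)=-\int_\Omega p\cdot\nabla u\,dx$. Finiteness gives, for all $u\in X_\varphi$ and $s>0$,
\[
    s\,\ell(u)\le F_0^*(\ell)+\tfrac12\norm{su-f}_{L^2}^2 .
\]
Expanding the right-hand side, dividing by $s$, and optimizing over $s$ (equivalently, bounding $\ell(u)\le C(1+\norm{u}_{L^2})$ and using that $\ell$ is linear, hence homogeneous) yields $\abs{\ell(u)}\le C'\norm{u}_{L^2}$. Thus $\ell$ extends uniquely by density to a bounded functional on $L^2(\Omega)$, and Riesz gives $w\in L^2$ with $\ell(u)=\int_\Omega wu\,dx$ for all $u\in X_\varphi$. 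Testing with $u\in C_c^\infty(\Omega)\subset X_\varphi$ shows $w=\operatorname{div}p$ distributionally, and the identity for all $u\in X_\varphi$ is then precisely the defining condition of $K_\varphi$. So $F_0^*(-A^*p)=+\infty$ for $p\notin K_\varphi$, and the supremum may be restricted to $K_\varphi$.

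The main obstacle is this converse step, specifically extracting $L^2$-boundedness of $\ell$ from finiteness of the conjugate. The cleanest route is to note that $g(u):=\frac12\norm{u-f}^2$ satisfies $g(u)\le\norm{u}^2+\norm{f}^2$, so
\[
    s\,\ell(u)\le C+s^2\norm{u}^2 \quad\text{for all } s>0,\ u\in X_\varphi .
\]
Taking $\norm{u}=1$ and $s=\sqrt C$ gives $\ell(u)\le 2\sqrt C$, and homogeneity finishes the bound. The remark on the boundary condition $p\cdot\nu=0$ is interpretive. For smooth $p$ it follows from the Gauss--Green formula, since the boundary term $\int_{\partial\Omega}(p\cdot\nu)u$ must vanish for all $u\in X_\varphi$, a class containing $C^\infty(\overline\Omega)$, whose traces are rich enough to force $p\cdot\nu=0$.
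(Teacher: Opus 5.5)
Your proposal is correct and follows the same route as the paper. It identifies the dual space via \cref{L: dual space}, evaluates $\rho_\varphi^*$ by \cref{P: dual modular}, computes $F_0^*(-A^*p)$ using the density in $L^2(\Omega)$ of bounded-gradient functions contained in $X_\varphi$, and obtains dual attainment from \cref{T: Fenchel.Rockafeller}. You also go beyond the paper in one place: the paper only asserts that $F_0^*(-A^*p)=+\infty$ when no $L^2$-representative of $-A^*p$ exists, whereas your scaling bound $s\,\ell(u)\le C+s^2\norm{u}_{L^2(\Omega)}^2$ combined with Riesz actually proves it. For completeness, note that $C\ge\frac12\norm{f}_{L^2(\Omega)}^2\ge0$, and the case $C=0$ is handled by letting $s\downarrow0$.
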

\begin{proof}
    We take dual variables in the associate space $L^{\varphi^*}(\Omega;\R^N)$, using \cref{L: dual space}. For $p\in L^{\varphi^*}(\Omega;\R^N)$, the functional $A^*p\in X_\varphi^*$ is given by
    \[
        \langle A^*p,u\rangle
        =
        \int_\Omega p\cdot\nabla u\,dx.
    \]
    We first note that $X_\varphi$ is dense in $L^2(\Omega)$. Indeed,
    \[
        W^{1,\infty}(\Omega)\subset X_\varphi,
    \]
    because $\Omega$ is bounded, $\nabla u\in L^\infty(\Omega;\R^N)$, and $\varphi(\cdot,t)\in L^1(\Omega)$ for every fixed $t\geq0$. Moreover, $W^{1,\infty}(\Omega)$ is dense in $L^2(\Omega)$. Hence $X_\varphi$ is dense in $L^2(\Omega)$.

    The quantity $F_0^*(-A^*p)$ is finite only if $-A^*p$ is represented by an element of $L^2(\Omega)$. If there exists $g\in L^2(\Omega)$ such that
    \[
        \int_\Omega p\cdot\nabla u\,dx
        =
        -\int_\Omega gu\,dx
        \qquad
        \text{for all }u\in X_\varphi,
    \]
    then we write $g=\operatorname{div}p$ in this weak sense. In this case $-A^*p$ is represented by $\operatorname{div}p\in L^2(\Omega)$, and therefore
    \[
    F_0^*(-A^*p)
    =
    \sup_{u\in X_\varphi}
    \left\{
    \int_\Omega (\operatorname{div}p)u\,dx
    -
    \frac12\norm{u-f}_{L^2(\Omega)}^2
    \right\}.
    \]
    Since $X_\varphi$ is dense in $L^2(\Omega)$, this supremum equals the supremum over all $u\in L^2(\Omega)$. Setting $w=u-f$, one obtains
    \[
    \begin{aligned}
    F_0^*(-A^*p)
    &=
    \sup_{w\in L^2(\Omega)}
    \left\{
    \int_\Omega (\operatorname{div}p)(w+f)\,dx
    -
    \frac12\norm{w}_{L^2(\Omega)}^2
    \right\}\\
    &=
    \frac12\norm{\operatorname{div}p}_{L^2(\Omega)}^2
    +
    \int_\Omega f\,\operatorname{div}p\,dx.
    \end{aligned}
    \]
    If no such $L^2$-representative exists, then $F_0^*(-A^*p)=+\infty$. Thus $F_0^*(-A^*p)$ is finite exactly on the divergence and weak-boundary class appearing in $K_\varphi$.

    \Cref{P: dual modular} gives
    \[
        G^*(p)=\int_\Omega\varphi^*(x,\abs{p(x)})\,dx,
    \]
    with value $+\infty$ when $p\notin\operatorname{dom}\rho_{\varphi^*}$. Combining the two conjugates proves the dual formula.
\end{proof}

\begin{proposition}[optimality conditions]
    The dual problem in \cref{L: dual problem for Orlicz} admits a maximizer $p\in K_\varphi$. Let $u\in X_\varphi$ be the primal minimizer and let $p\in K_\varphi$ be a dual maximizer. Then
    \[
        \operatorname{div}p=u-f
        \qquad\text{in }L^2(\Omega),
    \]
    and
    \[
        p(x)\in\partial_z\Phi(x,\nabla u(x))
        \qquad\text{for a.e. }x\in\Omega.
    \]
    If $\nabla u(x)\neq0$, this can be written as
    \begin{equation}
        \label{P: optimality conditions Orlicz}
        p(x)
        =
        a(x)\frac{\nabla u(x)}{\abs{\nabla u(x)}},
        \qquad
        a(x)\in\partial_t\varphi(x,\abs{\nabla u(x)}).
    \end{equation}
\end{proposition}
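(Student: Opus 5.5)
The plan is to read off the optimality system directly from \cref{T: Fenchel.Rockafeller}, with the setting $X=X_\varphi$, $Y=L^\varphi(\Omega;\R^N)$, $F_0$, $G=\rho_\varphi$, $A=\nabla$ fixed in \cref{s:3.3}. The qualification condition was verified just before \cref{L: dual problem for Orlicz}: we have $0\in\operatorname{dom}F_0$ and $\operatorname{dom}G=Y$. Together with \cref{P: existence}, the theorem therefore yields a dual maximizer $p\in Y^*$. By \cref{L: dual space}, $p$ is identified with an element of $L^{\varphi^*}(\Omega;\R^N)$. Since the dual value equals the finite primal value, $F_0^*(-A^*p)<\infty$. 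The proof of \cref{L: dual problem for Orlicz} then places $p$ in $K_\varphi$. The same theorem gives strong duality and the characterization \eqref{D: optimality conditions} for any primal--dual pair $(u,p)$.

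For the first condition, $-A^*p\in\partial F_0(u)$, I will use the weak divergence. Because $p\in K_\varphi$, the functional $-A^*p$ acts on $X_\varphi$ as $w\mapsto\int_\Omega(\operatorname{div}p)w\,dx$, with $\operatorname{div}p\in L^2$. The subgradient inequality then reads
\[
\int_\Omega (\operatorname{div}p)(w-u)\,dx\le \tfrac12\norm{w-f}_{L^2}^2-\tfrac12\norm{u-f}_{L^2}^2\qquad\text{for all }w\in X_\varphi.
\]
Inserting $w=u+\varepsilon h$ with $h\in X_\varphi$, dividing by $\varepsilon$ and letting $\varepsilon\to0^{\pm}$ gives $\int_\Omega(\operatorname{div}p-(u-f))h\,dx=0$ for all $h\in X_\varphi$. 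The density of $X_\varphi$ in $L^2(\Omega)$ (via $W^{1,\infty}$, as shown in \cref{L: dual problem for Orlicz}) then yields $\operatorname{div}p=u-f$ in $L^2(\Omega)$. I expect this to be the main point requiring care. The subdifferential is taken in $X_\varphi^*$ rather than in $L^2$, so the reduction to an $L^2$ identity depends on the $K_\varphi$ representation and on this density argument.

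For the second condition, $p\in\partial G(\nabla u)$, I will apply \cref{C: subdiff representation}, which gives $p(x)\in\partial_z\Phi(x,\nabla u(x))$ for a.e.\ $x$.

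It remains to derive the radial form at points where $z:=\nabla u(x)\neq0$. By the Fenchel equality, $\xi\in\partial_z\Phi(x,z)$ means $\xi\cdot z=\varphi(x,\abs z)+\varphi^*(x,\abs\xi)$. Young's inequality gives $\varphi(x,\abs z)+\varphi^*(x,\abs\xi)\ge\abs\xi\abs z$, while Cauchy--Schwarz gives $\xi\cdot z\le\abs\xi\abs z$. Equality must therefore hold in both, so $\xi=a\,z/\abs z$ with $a=\abs\xi\ge0$, and $a\abs z=\varphi(x,\abs z)+\varphi^*(x,a)$. The latter is exactly $a\in\partial_t\varphi(x,\abs z)$, by the scalar Fenchel equality for the convex lsc function $\varphi(x,\cdot)$ extended by $+\infty$ on negative reals. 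This proves \eqref{P: optimality conditions Orlicz}.
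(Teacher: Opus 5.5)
Your proposal is correct and follows the paper's route: Fenchel--Rockafellar with the qualification at $0$, reading the first extremality relation as $\operatorname{div}p=u-f$, converting the second to the pointwise inclusion via \cref{C: subdiff representation}, and then deriving the radial form. You also supply details the paper leaves implicit: why the dual maximizer lies in $K_\varphi$, the variational-plus-density argument that turns the $X_\varphi^*$-subgradient relation into an $L^2$ identity, and the Young/Cauchy--Schwarz equality argument for the radial description.
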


\begin{proof}
    Dual attainment follows from Fenchel--Rockafellar and the qualification verified above. The Fenchel--Rockafellar optimality system gives
    \[
        -A^*p\in\partial F_0(u),
        \qquad
        p\in\partial G(\nabla u).
    \]
    Since $\partial F_0(u)=u-f$, the first condition is $\operatorname{div}p=u-f$ in $L^2(\Omega)$, with the boundary condition already encoded by $p\in K_\varphi$. By \cref{C: subdiff representation}, the second condition is precisely
    \[
        p(x)\in\partial_z\Phi(x,\nabla u(x))
        \qquad\text{for a.e. }x\in\Omega.
    \]
    The radial description follows from the subdifferential of $z\mapsto\varphi(x,\abs{z})$.
\end{proof}


\section{The scaled logarithmic model}
\label{s:4}

We now specialize the abstract Orlicz framework from \cref{s:3} to a spatially dependent, logarithmically corrected total-variation density. The idea is to obtain a continuous Orlicz--Sobolev relaxation that avoiding the $BV$-relaxation of the ROF model, while remaining close to total variation. Through a local scale function $\kappa$, the model can tune how strongly the logarithmic correction enters across the domain. We record resulting basic properties, dual formulation, primal-dual splitting structure, and the $\Gamma$-convergence to the ROF functional as the logarithmic correction vanishes.

\subsection{Definition and basic properties}
\label{s:4.1}

Let $\alpha>0$ be fixed. We prescribe a local gradient scale
\begin{equation}
    \kappa\in L^\infty(\Omega),
    \qquad
    0<\kappa_-\leq \kappa(x)\leq \kappa_+<\infty
    \quad\text{for a.e. }x\in\Omega.
    \label{D: kappa}
\end{equation}
For $\gamma>0$, define the scaled logarithmic integrand
\begin{equation}
    \varphi_\gamma(x,t)
    :=
    \alpha t\left(1+\gamma\log^+\frac{t}{\kappa(x)}\right),
    \qquad t\geq0.
    \label{D: varphi gamma}
\end{equation}
The scale function $\kappa$ remains fixed throughout this section and is therefore suppressed in the notation. Equivalently,
\begin{equation}
    \label{D: varphi gamma piecewise}
    \varphi_\gamma(x,t)
    =
    \begin{cases}
        \alpha t,
        & 0\leq t\leq \kappa(x),\\[0.5ex]
        \alpha t\left(1+\gamma\log\dfrac{t}{\kappa(x)}\right),
        & t\geq \kappa(x).
    \end{cases}
\end{equation}

\begin{remark}[smooth logarithmic variant]
As a smooth alternative to \eqref{D: varphi gamma piecewise}, one may consider
\[
\widetilde{\varphi}_{\gamma}(x,t):=\alpha t\left(1+\gamma\log\left(1+\frac{t}{\kappa(x)}\right)\right),\qquad t\geq0.
\]
Both $\varphi_\gamma$ and $\widetilde{\varphi}_{\gamma}$ generate the same generalized Orlicz and Orlicz--Sobolev spaces with equivalent Luxemburg norms. However, the pointwise dual proximal map related to $\widetilde{\varphi}_{\gamma}$ requires the inversion of a scalar nonlinear function rather than admitting the explicit piecewise formula derived in \cref{s:4.3}.
\end{remark}

\begin{lemma}[properties of the scaled logarithmic integrand]
    \label{L: varphi gamma properties}
    For every $\gamma>0$, the function
    \[
        \varphi_\gamma:\Omega\times[0,\infty)\to[0,\infty)
    \]
    satisfies the standing assumptions of \cref{s:3}. More precisely:
    \begin{enumerate}
        \item For almost every $x\in\Omega$, the map $t\mapsto\varphi_\gamma(x,t)$ is convex, continuous, increasing, satisfies $\varphi_\gamma(x,0)=0$, and
            \[
                \lim_{t\to\infty}
                \frac{\varphi_\gamma(x,t)}{t}
                =\infty.
            \]

        \item The superlinear growth is uniform in $x$:
            \[
                \lim_{t\to\infty}
                \operatorname*{ess\,inf}_{x\in\Omega}
                \frac{\varphi_\gamma(x,t)}{t}
                =\infty.
            \]

        \item The uniform $\Delta_2$-condition holds:
            \[
                \varphi_\gamma(x,2t)
                \leq
                2(1+\gamma\log2)\varphi_\gamma(x,t)
                \qquad\text{for a.e. }x\in\Omega,
                \quad t\geq0.
            \]

        \item For almost every $x\in\Omega$, the function $t\mapsto\varphi_\gamma(x,t)$ satisfies $(\mathrm{Inc})_1$, but it does not satisfy $(\mathrm{aInc})_p$ for any $p>1$.

        \item For every $q\geq1+\gamma$, the function $t\mapsto\varphi_\gamma(x,t)$ satisfies $(\mathrm{Dec})_q$ uniformly in $x$.

        \item As $\gamma\downarrow0$,
            \[
                \varphi_\gamma(x,t)\to \alpha t
            \]
            locally uniformly in $t\in[0,\infty)$, uniformly in $x$.
    \end{enumerate}
\end{lemma}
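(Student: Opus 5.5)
The plan is to reduce everything to the scalar function $g_\gamma(s):=s(1+\gamma\log^+ s)$. By \eqref{D: varphi gamma} we have $\varphi_\gamma(x,t)=\alpha\kappa(x)\,g_\gamma(t/\kappa(x))$. Each property then follows from a property of $g_\gamma$ together with the bounds $\kappa_-\le\kappa\le\kappa_+$.

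Measurability of $x\mapsto\varphi_\gamma(x,t)$ holds because $\kappa$ is measurable. Integrability $\varphi_\gamma(\cdot,t)\in L^1(\Omega)$ follows from the bound
\[
\varphi_\gamma(x,t)\le\alpha t\bigl(1+\gamma\log^+(t/\kappa_-)\bigr)
\]
on the bounded domain $\Omega$.

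For item (i), I work with the piecewise form \eqref{D: varphi gamma piecewise}. On $(0,\kappa)$ the derivative is $\alpha$. On $(\kappa,\infty)$ it is
\[
\partial_t\varphi_\gamma=\alpha\bigl(1+\gamma+\gamma\log(t/\kappa)\bigr),
\]
which is increasing in $t$. At $t=\kappa$ the two branches agree, so the function is continuous, and the derivative jumps upward from $\alpha$ to $\alpha(1+\gamma)$. A continuous function whose one-sided derivatives are nondecreasing is convex, which gives convexity; monotonicity and $\varphi_\gamma(x,0)=0$ are immediate.

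For item (ii), I use $\varphi_\gamma(x,t)/t=\alpha(1+\gamma\log^+(t/\kappa))\ge\alpha(1+\gamma\log^+(t/\kappa_+))$. The right-hand side is independent of $x$ and tends to $\infty$, which gives the uniform superlinear growth and, in particular, the pointwise limit in (i).

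Items (iii)–(v) are ratio estimates.
\begin{itemize}
\item For (iii), the ratio is
\[
\frac{\varphi_\gamma(x,2t)}{\varphi_\gamma(x,t)}=2\,\frac{1+\gamma\log^+(2s)}{1+\gamma\log^+ s},\qquad s=t/\kappa.
\]
Since $\log^+(2s)\le\log2+\log^+ s$, the numerator is at most $1+\gamma\log 2+\gamma\log^+s$. This gives the bound $2(1+\gamma\log2)$, with $t=0$ trivial.
\item For (iv), $\varphi_\gamma(x,t)/t=\alpha(1+\gamma\log^+(t/\kappa))$ is nondecreasing, so $(\mathrm{Inc})_1$ holds. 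But $\varphi_\gamma/t^p$ is constant times $t^{1-p}$ on $(0,\kappa)$, which is strictly decreasing to $0$ over an unbounded ratio range as $t\downarrow0$. Comparing $t\to0$ with $t=\kappa$ shows that no almost-increasing constant $L$ can exist, so $(\mathrm{aInc})_p$ fails for every $p>1$. This is the step needing the most care: one has to state precisely the definition of ``almost increasing'' (there is $L$ with $f(s)\le Lf(t)$ for $s\le t$) and exhibit the failing pair. Using small $t$ is simplest.
\item For (v), it suffices to show $h(t)=\log\varphi_\gamma-q\log t$ is nonincreasing. On $(0,\kappa)$ we get $h'=(1-q)/t\le0$. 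On $(\kappa,\infty)$,
\[
h'=\frac1t\Bigl(1+\frac{\gamma}{1+\gamma\log(t/\kappa)}-q\Bigr)\le\frac{1+\gamma-q}{t}\le0,
\]
and continuity of $h$ at $t=\kappa$ glues the two pieces. This bound is uniform in $x$.
\end{itemize}

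For item (vi), I estimate
\[
0\le\varphi_\gamma(x,t)-\alpha t=\alpha\gamma\, t\log^+(t/\kappa)\le\alpha\gamma\,T\log^+(T/\kappa_-)\qquad\text{for }t\in[0,T].
\]
The right-hand side tends to $0$ as $\gamma\downarrow0$, uniformly in $x$, which is the claimed local uniform convergence.
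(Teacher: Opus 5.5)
Your proposal is correct and follows essentially the same route as the paper's proof. The shared steps are: the piecewise derivative with its upward jump at $\kappa$ for convexity; the lower bound via $\kappa_+$ for uniform superlinearity; $\log^+(2s)\le\log2+\log^+s$ for $\Delta_2$; the linear branch near $0$, compared with a fixed point such as $t=\kappa$, to rule out $(\mathrm{aInc})_p$; a derivative sign check for $(\mathrm{Dec})_q$, where you differentiate $\log h$ rather than $h$, which is equivalent; and the bound $\alpha\gamma T\log^+(T/\kappa_-)$ for (vi).

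One small wording slip: in (iv), $t^{1-p}$ blows up as $t\downarrow0$ rather than ``decreasing to $0$''. That blow-up is exactly what makes the comparison with $t=\kappa$ fail for every constant $L$.
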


\begin{proof}
Since $\kappa$ is measurable, $x\mapsto\varphi_\gamma(x,t)$ is measurable for every $t\geq0$; since $\kappa\geq\kappa_->0$ and $\Omega$ is bounded, $\varphi_\gamma(\cdot,t)\in L^1(\Omega)$ for every $t\geq0$. The piecewise representation \eqref{D: varphi gamma piecewise} shows continuity, monotonicity, and $\varphi_\gamma(x,0)=0$. For $0<t<\kappa(x)$, the derivative with respect to $t$ equals $\alpha$, while for $t>\kappa(x)$ it equals
\[
    \alpha\left(1+\gamma+\gamma\log\frac{t}{\kappa(x)}\right),
\]
which is increasing in $t$. Hence $t\mapsto\varphi_\gamma(x,t)$ is convex.

The lower bound $\varphi_\gamma(x,t)\geq \alpha t$ is immediate from $\log^+\geq0$. Moreover,
\[
    \frac{\varphi_\gamma(x,t)}{t}
    \geq
    \alpha\left(1+\gamma\log\frac{t}{\kappa_+}\right)
    \qquad\text{for }t\geq\kappa_+,
\]
which proves uniform superlinear growth.

To prove the $\Delta_2$-condition, note that $\log^+(2s)\leq\log2+\log^+s$ for $s\geq0$. Therefore
\[
    \varphi_\gamma(x,2t)
    \leq
    2\alpha t\left(1+\gamma\log2+\gamma\log^+\frac{t}{\kappa(x)}\right)
    \leq
    2(1+\gamma\log2)\varphi_\gamma(x,t).
\]
The $(\mathrm{Inc})_1$ property follows from the monotonicity of
\[
    \frac{\varphi_\gamma(x,t)}{t}
    =
    \alpha\left(1+\gamma\log^+\frac{t}{\kappa(x)}\right).
\]
Let $p>1$. Suppose that $\varphi_\gamma(x,\cdot)$ satisfied $(\mathrm{aInc})_p$ with constant $L$. Then for $0<s<t<\kappa(x)$,
\[
    \frac{\varphi_\gamma(x,s)}{s^p}
    \leq
    L\frac{\varphi_\gamma(x,t)}{t^p}.
\]
Since $\varphi_\gamma(x,r)=\alpha r$ for $0<r<\kappa(x)$, this becomes
\[
    \alpha s^{1-p}
    \leq
    L\alpha t^{1-p}.
\]
Fixing $t\in(0,\kappa(x))$ and letting $s\downarrow0$ gives a contradiction. Hence $(\mathrm{aInc})_p$ fails for every $p>1$.

For the almost decreasing property, set
\[
    h_q(t):=\frac{\varphi_\gamma(x,t)}{t^q}.
\]
On $(0,\kappa(x))$, this is $\alpha t^{1-q}$ and is decreasing for $q\geq1$. On $(\kappa(x),\infty)$,
\[
    h_q'(t)
    =
    \alpha t^{-q}
    \left(
        \gamma-(q-1)\left(1+\gamma\log\frac{t}{\kappa(x)}\right)
    \right),
\]
which is nonpositive for $q\geq1+\gamma$. Thus $(\mathrm{Dec})_q$ holds uniformly in $x$. Finally, on each interval $0\leq t\leq R$,
\[
    0\leq \varphi_\gamma(x,t)-\alpha t
    \leq
    \alpha\gamma R\log^+\frac{R}{\kappa_-},
\]
which gives local uniform convergence to $\alpha t$.
\end{proof}

For $\gamma>0$, set
\[
    X_\gamma
    :=
    \left\{u\in L^2(\Omega)\cap W^{1,1}(\Omega)\st
    \int_\Omega\varphi_\gamma(x,\abs{\nabla u})\,dx<\infty
    \right\}.
\]
We consider
\begin{equation}
    \mathcal{F}_\gamma(u)
    :=
    \frac12\norm{u-f}_{L^2(\Omega)}^2
    +
    \int_\Omega
    \varphi_\gamma(x,\abs{\nabla u})\,dx,
    \qquad u\in X_\gamma.
    \label{D: scaled logarithmic problem}
\end{equation}

\begin{proposition}[existence and uniqueness]
\label{P: existence logarithmic}
For every $\gamma>0$, the functional $\mathcal{F}_\gamma$ admits a unique minimizer in $X_\gamma$.
\end{proposition}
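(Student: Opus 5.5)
The plan is to reduce the statement to the abstract result \cref{P: existence}, applied with the integrand $\varphi=\varphi_\gamma$. Once $\varphi_\gamma$ is shown to satisfy the standing assumptions of \cref{s:3}, the two problems coincide. Indeed, $X_\gamma$ is exactly $X_\varphi$ for $\varphi=\varphi_\gamma$, and $\mathcal F_\gamma$ is exactly the functional $\mathcal F$ from \eqref{D: functional}.

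The verification of those assumptions is \cref{L: varphi gamma properties}, and I would collect the items in this order.
\begin{itemize}
    \item Measurability of $x\mapsto\varphi_\gamma(x,t)$ and the integrability $\varphi_\gamma(\cdot,t)\in L^1(\Omega)$ are established at the start of its proof.
    \item Item~(i) gives convexity, continuity, monotonicity and $\varphi_\gamma(x,0)=0$.
    \item Item~(ii) gives the uniform superlinear growth.
    \item Item~(iii) gives the uniform $\Delta_2$-condition with $C_\Delta=2(1+\gamma\log 2)$.
\end{itemize}
Lower semicontinuity and convexity of $z\mapsto\Phi(x,z)=\varphi_\gamma(x,\abs z)$ then follow from composing a convex, increasing, continuous function with the Euclidean norm.

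With these in hand, I would invoke \cref{P: existence} directly. Its proof runs the direct method: take a minimizing sequence, apply the compactness \cref{L: compactness}, and pass to the limit using the weak lower semicontinuity from \cref{R: lsc}. Uniqueness comes from strict convexity of the fidelity term.

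There is one detail in the uniqueness step I would spell out. Let $u_1,u_2$ be two minimizers and set $w=\tfrac12(u_1+u_2)$. Convexity of the modular together with strict convexity of $u\mapsto\frac12\norm{u-f}^2_{L^2}$ gives $\mathcal F_\gamma(w)<\min\mathcal F_\gamma$ unless $u_1=u_2$ in $L^2(\Omega)$. Finally, $\mathcal F_\gamma$ is bounded below by zero and finite at $u=0$, so the infimum is finite and a minimizing sequence with bounded energy exists.

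I expect no genuine obstacle, since this is a corollary. The only step needing care is confirming that every hypothesis of \cref{s:3} is really covered by \cref{L: varphi gamma properties}. The point to check is that the uniform bounds $\kappa_-\le\kappa\le\kappa_+$ make the constants in items~(ii) and~(iii) independent of $x$.
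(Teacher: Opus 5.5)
Your proposal is correct and follows the paper's own proof: check the standing assumptions of \cref{s:3} via \cref{L: varphi gamma properties}, then invoke \cref{P: existence}, and obtain uniqueness from strict convexity of the $L^2$-fidelity plus convexity of the modular. Your explicit midpoint argument and your remark that $\mathcal F_\gamma(0)<\infty$ just spell out details the paper leaves implicit.
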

\begin{proof}
By \cref{L: varphi gamma properties}, the integrand $\varphi_\gamma$ satisfies the standing assumptions of \cref{s:3}. Hence \cref{P: existence} yields existence of a minimizer in $X_\gamma$. Uniqueness follows from the strict convexity of the $L^2$-fidelity term and the convexity of the regularization term.
\end{proof}

\begin{remark}
    Alternatively to $\varphi_\gamma$, 
\end{remark}

\subsection{Fenchel duality and optimality system}
\label{s:4.2}

\begin{lemma}[dual problem for the scaled logarithmic regularizer]
\label{L: dual problem logarithmic}
The Fenchel dual problem corresponding to \eqref{D: scaled logarithmic problem} is
\[
    \sup_{p\in K_\gamma}
    \left
    \{
    -\frac12\norm{\operatorname{div}p}_{L^2(\Omega)}^2
    -\int_\Omega f\,\operatorname{div}p\,dx
    -\int_\Omega \varphi_\gamma^*(x,\abs{p(x)})\,dx
    \right\},
\]
where
\[
    K_\gamma
    :=
    \left\{p\in L^{\varphi_\gamma^*}(\Omega;\R^N)\st
    \operatorname{div}p\in L^2(\Omega)
    \text{ and }
    \int_\Omega p\cdot\nabla u\,dx
    =
    -\int_\Omega (\operatorname{div}p)u\,dx
    \text{ for all }u\in X_\gamma
    \right\}.
\]
The integration-by-parts identity is the weak homogeneous normal boundary condition.
\end{lemma}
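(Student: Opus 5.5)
This is a direct specialization of \cref{L: dual problem for Orlicz}, so the plan is to verify that its hypotheses hold for $\varphi=\varphi_\gamma$ and then read off the formula with $X_\varphi=X_\gamma$ and $K_\varphi=K_\gamma$.

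First, \cref{L: varphi gamma properties} shows that for every $\gamma>0$ the integrand $\varphi_\gamma$ satisfies all standing assumptions of \cref{s:3}. These are measurability in $x$, and convexity, continuity, monotonicity and $\varphi_\gamma(x,0)=0$ in $t$. They also include integrability of $\varphi_\gamma(\cdot,t)$, the uniform $\Delta_2$-condition with constant $2(1+\gamma\log2)$, and uniform superlinear growth. Consequently the following hold for this integrand:
\begin{itemize}
\item the space $X_\varphi$ coincides with $X_\gamma$ as defined before \eqref{D: scaled logarithmic problem};
\item $\mathcal F$ coincides with $\mathcal F_\gamma$;
\item \cref{L: dual space} identifies $(L^{\varphi_\gamma}(\Omega;\R^N))^*$ with $L^{\varphi_\gamma^*}(\Omega;\R^N)$ via the integral pairing;
\item \cref{P: dual modular} gives $\rho_{\varphi_\gamma}^*(p)=\int_\Omega\varphi_\gamma^*(x,\abs{p})\,dx$.
\end{itemize}

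Next I would recheck the Fenchel--Rockafellar setting from \cref{s:3.3} with $F_0(u)=\frac12\norm{u-f}_{L^2}^2$, $G=\rho_{\varphi_\gamma}$ and $A=\nabla:X_\gamma\to L^{\varphi_\gamma}(\Omega;\R^N)$. The primal minimizer exists by \cref{P: existence logarithmic}. The qualification condition holds at $u_0=0$, because $G$ is finite on all of $L^{\varphi_\gamma}$ by $\Delta_2$, so $\operatorname{dom}G$ is the whole space.

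The computation of $F_0^*(-A^*p)$ then proceeds exactly as in the proof of \cref{L: dual problem for Orlicz}. The density of $W^{1,\infty}(\Omega)\subset X_\gamma$ in $L^2(\Omega)$ holds because $\varphi_\gamma(\cdot,t)\le\alpha t(1+\gamma\log^+(t/\kappa_-))$ is bounded. It yields finiteness precisely on the weak-divergence class $K_\gamma$, with value $\frac12\norm{\operatorname{div}p}^2_{L^2}+\int_\Omega f\operatorname{div}p\,dx$. Combining the two conjugates in (D) gives the stated dual problem.

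There is no real obstacle here. The only point deserving care is that $\varphi_\gamma^*$ has exponential growth and fails $\Delta_2$. One must therefore keep the dual variable in the full space $L^{\varphi_\gamma^*}$ and let the term $\int\varphi_\gamma^*(x,\abs p)$ take the value $+\infty$ off $\operatorname{dom}\rho_{\varphi_\gamma^*}$. \cref{P: dual modular} already handles exactly this case.
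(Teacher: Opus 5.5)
Your proposal is correct and matches the paper's proof, which simply applies \cref{L: dual problem for Orlicz} with $\varphi=\varphi_\gamma$ after invoking \cref{L: varphi gamma properties} to verify the standing assumptions of \cref{s:3}. Your re-verification of the qualification condition and of $F_0^*(-A^*p)$ is redundant, since both are contained in the general lemma; also, density of $W^{1,\infty}(\Omega)$ in $L^2(\Omega)$ is standard, and the bound on $\varphi_\gamma(\cdot,t)$ is needed only for the inclusion $W^{1,\infty}(\Omega)\subset X_\gamma$.
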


\begin{proof}
Apply \cref{L: dual problem for Orlicz} with $\varphi=\varphi_\gamma$. The assumptions of \cref{s:3} hold by \cref{L: varphi gamma properties}.
\end{proof}

\begin{lemma}[scalar subdifferentials and conjugate]
    \label{L: logarithmic scalar subdifferentials}
    For almost every $x\in\Omega$, regarding $t\mapsto\varphi_\gamma(x,t)$ as a convex function on $[0,\infty)$, there holds
    \[
        \partial_t\varphi_\gamma(x,t)
        =
        \begin{cases}
            (-\infty,\alpha], & t=0,\\[0.4em]
            \{\alpha\}, & 0<t<\kappa(x),\\[0.4em]
            [\alpha,\alpha(1+\gamma)], & t=\kappa(x),\\[0.4em]
            \left\{\alpha\left(1+\gamma+\gamma\log\dfrac{t}{\kappa(x)}\right)\right\},
            & t>\kappa(x).
        \end{cases}
    \]
    For clarity, we regard $\varphi_\gamma^*(x,\cdot)$ as the Fenchel conjugate on $\R$ of the function $\varphi_\gamma(x,\cdot)$ defined on $[0,\infty)$, that is,
    \[
        \varphi_\gamma^*(x,s)
        =
        \sup_{t\geq0}
        \left\{
            st-\varphi_\gamma(x,t)
        \right\},
        \qquad s\in\R.
    \]
    Then
    \[
        \varphi_\gamma^*(x,s)
        =
        \begin{cases}
            0,
            & s\leq\alpha,\\[0.4em]
            \kappa(x)(s-\alpha),
            & \alpha<s\leq\alpha(1+\gamma),\\[0.4em]
            \alpha\gamma\kappa(x)
            \exp\!\left(\dfrac{s/\alpha-1-\gamma}{\gamma}\right),
            & s>\alpha(1+\gamma).
        \end{cases}
    \]
    The formula is continuous at $s=\alpha$ and $s=\alpha(1+\gamma)$. Moreover,
    \begin{equation}
        \partial_s\varphi_\gamma^*(x,s)
        =
        \begin{cases}
            \{0\},
            & s<\alpha,\\[0.4em]
            [0,\kappa(x)],
            & s=\alpha,\\[0.4em]
            \{\kappa(x)\},
            & \alpha<s\leq\alpha(1+\gamma),\\[0.4em]
            \left\{
                \kappa(x)
                \exp\!\left(\dfrac{s/\alpha-1-\gamma}{\gamma}\right)
            \right\},
            & s>\alpha(1+\gamma).
        \end{cases}
        \label{D: explicit dual subgradient phi}
    \end{equation}
    Finally, for
    \[
        \Phi_\gamma(x,z):=\varphi_\gamma(x,\abs{z}),
    \]
    the corresponding vectorial subdifferential is radial. For $z\neq0$,
    \[
        \partial_z\Phi_\gamma(x,z)
        =
        \partial_t\varphi_\gamma(x,\abs{z})
        \frac{z}{\abs{z}},
    \]
    and hence
    \[
        \partial_z\Phi_\gamma(x,z)
        =
        \begin{cases}
            \left\{\alpha\dfrac{z}{\abs{z}}\right\},
            & 0<\abs{z}<\kappa(x),\\[1em]
            \left\{\lambda\dfrac{z}{\abs{z}}:\lambda\in[\alpha,\alpha(1+\gamma)]\right\},
            & \abs{z}=\kappa(x),\\[1em]
            \left\{
                \alpha\left(1+\gamma+\gamma\log\dfrac{\abs{z}}{\kappa(x)}\right)
                \dfrac{z}{\abs{z}}
            \right\},
            & \abs{z}>\kappa(x).
        \end{cases}
    \]
    At the origin,
    \[
        \partial_z\Phi_\gamma(x,0)
        =
        \left\{\xi\in\R^N:\abs{\xi}\leq\alpha\right\}.
    \]
\end{lemma}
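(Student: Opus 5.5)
Fix $x$ with $0<\kappa_-\le\kappa(x)\le\kappa_+$ and write $\kappa=\kappa(x)$, $g(t)=\varphi_\gamma(x,t)$. The plan is to compute everything from the piecewise representation \eqref{D: varphi gamma piecewise} and the derivative formula from the proof of \cref{L: varphi gamma properties}.

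\textbf{Step 1: the subdifferential of $g$.} The function $g$ is convex and continuous on $[0,\infty)$. It is differentiable on $(0,\kappa)\cup(\kappa,\infty)$, with $g'=\alpha$ on $(0,\kappa)$ and $g'(t)=\alpha(1+\gamma+\gamma\log(t/\kappa))$ for $t>\kappa$. For a one-dimensional convex function, $\partial g(t)=[g'_-(t),g'_+(t)]$ at interior points. This gives the singleton cases and $[\alpha,\alpha(1+\gamma)]$ at $t=\kappa$, since the right derivative there is the limit $\alpha(1+\gamma)$. At $t=0$, $g$ is regarded as $+\infty$ on $(-\infty,0)$, so $\partial g(0)=(-\infty,g'_+(0)]=(-\infty,\alpha]$.

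\textbf{Step 2: the conjugate and its subdifferential.} Compute $g^*(s)=\sup_{t\ge0}\{st-g(t)\}$ via the inverse relation: the sup is attained at $t$ with $s\in\partial g(t)$.
\begin{itemize}
\item For $s\le\alpha$: since $g(t)\ge\alpha t$, we have $st-g(t)\le0$, with equality at $t=0$, so $g^*(s)=0$.
\item For $\alpha<s\le\alpha(1+\gamma)$: Step~1 gives $s\in\partial g(\kappa)$, so $g^*(s)=s\kappa-\alpha\kappa$.
\item For $s>\alpha(1+\gamma)$: solve $s=\alpha(1+\gamma+\gamma\log(t/\kappa))$, which gives $t_s=\kappa\exp\bigl((s/\alpha-1-\gamma)/\gamma\bigr)$. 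Substituting, $g(t_s)=\alpha t_s(1+\gamma\log(t_s/\kappa))=t_s(s-\alpha\gamma)$, hence $g^*(s)=\alpha\gamma t_s$, which is the stated formula.
\end{itemize}
Continuity at the breakpoints is immediate: the values are $0$ at $s=\alpha$ and $\alpha\gamma\kappa$ at $s=\alpha(1+\gamma)$ from both sides.

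For $\partial g^*$, use $t\in\partial g^*(s)\iff s\in\partial g(t)$ (Fenchel--Young equality, since $g$ is proper, convex and lsc). Inverting the graph from Step~1 gives:
\begin{itemize}
\item $\{0\}$ for $s<\alpha$;
\item $[0,\kappa]$ at $s=\alpha$, because $\alpha\in\partial g(t)$ for all $t\in[0,\kappa]$;
\item $\{\kappa\}$ on $(\alpha,\alpha(1+\gamma)]$;
\item $\{t_s\}$ beyond.
\end{itemize}
One can cross-check by differentiating the explicit $g^*$ piecewise.

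\textbf{Step 3: the vectorial subdifferential.} For $\Phi_\gamma(z)=g(\abs{z})$ with $z\neq0$, the map $z\mapsto\abs{z}$ is differentiable with gradient $z/\abs{z}$, and $g$ is nondecreasing and convex. The chain rule for a nondecreasing convex outer function composed with a convex function differentiable at $z$ gives $\partial\Phi_\gamma(z)=\partial g(\abs{z})\,z/\abs{z}$. Only nonnegative subgradients of $g$ occur at $\abs{z}>0$, so the chain rule applies.

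To avoid relying on the chain rule, the plan is to verify this directly via Fenchel equality, using $\Phi_\gamma^*(\xi)=g^*(\abs{\xi})$ from \cref{s:3.2}. The condition $\xi\in\partial\Phi_\gamma(z)$ is equivalent to $g(\abs{z})+g^*(\abs{\xi})=\xi\cdot z$. Since $\xi\cdot z\le\abs{\xi}\abs{z}\le g(\abs{z})+g^*(\abs{\xi})$, this forces equality in Cauchy--Schwarz, i.e.\ $\xi$ parallel to $z$ for $z\ne0$, together with $\abs{\xi}\in\partial g(\abs{z})$.

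At $z=0$ the same identity reads $g^*(\abs{\xi})=0$, i.e.\ $\abs{\xi}\le\alpha$. This gives the ball $\{\abs{\xi}\le\alpha\}$.

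\textbf{Main obstacle.} The main care point is the boundary case $t=0$ together with the conjugate being taken over $s\in\R$. One must consistently treat $g$ as extended by $+\infty$ on the negatives, and check that negative $s$ yields $g^*=0$ and $\partial g^*=\{0\}$. Beyond that, the argument is routine.
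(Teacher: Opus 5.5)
Your proof is correct, but from Step~2 onward it takes a different route from the paper. Step~1 matches the paper's computation, except that the paper checks the subgradient inequality at $t=0$ directly instead of quoting $(-\infty,g'_+(0)]$. After that you rely throughout on the equivalences $s\in\partial g(t)\iff g(t)+g^*(s)=st\iff t\in\partial g^*(s)$. Because the graph of $\partial g$ from Step~1 covers every $s\in\R$, it gives at once a maximizer in the definition of $g^*(s)$ and the whole set $\partial g^*(s)$. That includes the kink $[0,\kappa]$ at $s=\alpha$ and the absence of a kink at $s=\alpha(1+\gamma)$.

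The paper instead works branch by branch. It maximizes separately over $[0,\kappa]$ and $[\kappa,\infty)$, using strict concavity of $\Psi_s(t)=st-g(t)$ on the logarithmic branch and an endpoint argument when $\Psi_s'(\kappa)\le0$. It then obtains $\partial_s\varphi_\gamma^*$ by differentiating the explicit pieces and taking one-sided slopes at the breakpoints; this is exactly the cross-check you mention. For the vectorial part, the paper uses the chain rule for $z\neq0$ and a direct subgradient check at $z=0$. Your Fenchel-equality argument, via $\Phi_\gamma^*(\xi)=g^*(\abs{\xi})$ and the equality case of Cauchy--Schwarz, handles $z\neq0$ and $z=0$ in one stroke and needs no qualification condition for a chain rule.

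As for the trade-off, your route reuses Step~1 for everything and avoids case-by-case bookkeeping. It does rely on biconjugation, namely that $g$ extended by $+\infty$ is proper, convex and lsc, which you correctly point out. The paper's route is more elementary and self-contained, and it makes explicit which branch attains the supremum for each $s$.
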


\begin{proof}
Fix $x$ outside a null set and write $\kappa=\kappa(x)$. Then
\[
\varphi_\gamma(x,t)
=
\begin{cases}
\alpha t, & 0\leq t\leq\kappa,\\[0.4em]
\alpha t\left(1+\gamma\log\dfrac{t}{\kappa}\right),
& t\geq\kappa.
\end{cases}
\]
We first compute the scalar subdifferential of $\varphi_\gamma(x,\cdot)$. For $0<t<\kappa$,
\[
    \varphi_\gamma'(x,t)=\alpha,
\]
while for $t>\kappa$,
\[
    \varphi_\gamma'(x,t)
    =
    \alpha\left(1+\gamma+\gamma\log\frac{t}{\kappa}\right).
\]
At $t=\kappa$, the left derivative is $\alpha$ and the right derivative is $\alpha(1+\gamma)$, hence
\[
    \partial_t\varphi_\gamma(x,\kappa)
    =
    [\alpha,\alpha(1+\gamma)].
\]
At $t=0$, the subgradient inequality over the domain $[0,\infty)$ is
\[
    \xi t\leq\varphi_\gamma(x,t)
    \qquad\text{for all }t\geq0.
\]
If $\xi\leq\alpha$, then $\xi t\leq\alpha t\leq\varphi_\gamma(x,t)$ for every $t\geq0$. If $\xi>\alpha$, the inequality fails for every sufficiently small positive $t\leq\kappa$. Thus it holds precisely when $\xi\leq\alpha$, and
\[
    \partial_t\varphi_\gamma(x,0)=(-\infty,\alpha].
\]

We now compute the conjugate. On the linear branch $0\leq t\leq\kappa$,
\[
    st-\varphi_\gamma(x,t)=(s-\alpha)t.
\]
Therefore the supremum over this interval is $0$ if $s\leq\alpha$, and $\kappa(s-\alpha)$ if $s>\alpha$.

On the logarithmic branch $t\geq\kappa$, set
\[
    \Psi_s(t)
    :=
    st-\alpha t\left(1+\gamma\log\frac{t}{\kappa}\right).
\]
Then
\[
    \Psi_s'(t)
    =
    s-\alpha(1+\gamma)-\alpha\gamma\log\frac{t}{\kappa},
    \qquad
    \Psi_s''(t)
    =
    -\frac{\alpha\gamma}{t}<0.
\]
Thus a critical point is the unique maximizer on the logarithmic branch. Solving $\Psi_s'(t)=0$ gives
\[
    t_s
    =
    \kappa
    \exp\!\left(\frac{s/\alpha-1-\gamma}{\gamma}\right).
\]
This satisfies $t_s\geq\kappa$ exactly when $s\geq\alpha(1+\gamma)$. If $s\leq\alpha(1+\gamma)$, then $\Psi_s'(\kappa)\leq0$, so concavity implies that the logarithmic branch is maximized at the endpoint $t=\kappa$, giving the same value $\kappa(s-\alpha)$ as the right endpoint of the linear branch. Combining this with the linear-branch computation gives the first two cases of the conjugate.

For $s>\alpha(1+\gamma)$, the unique maximizer is $t_s>\kappa$. The stationarity identity can be rewritten as
\[
    s-\alpha-\alpha\gamma\log\frac{t_s}{\kappa}
    =
    \alpha\gamma.
\]
Consequently,
\[
\begin{aligned}
    \varphi_\gamma^*(x,s)
    &=
    s t_s-\alpha t_s\left(1+\gamma\log\frac{t_s}{\kappa}\right)\\
    &=
    t_s\left(s-\alpha-\alpha\gamma\log\frac{t_s}{\kappa}\right)\\
    &=
    \alpha\gamma t_s\\
    &=
    \alpha\gamma\kappa
    \exp\!\left(\frac{s/\alpha-1-\gamma}{\gamma}\right),
\end{aligned}
\]
which is the exponential branch.

The endpoint values agree. At $s=\alpha$,
\[
    0=\kappa(\alpha-\alpha).
\]
At $s=\alpha(1+\gamma)$,
\[
    \kappa(\alpha(1+\gamma)-\alpha)
    =
    \alpha\gamma\kappa
\]
and
\[
    \alpha\gamma\kappa
    \exp\!\left(\frac{1+\gamma-1-\gamma}{\gamma}\right)
    =
    \alpha\gamma\kappa.
\]
The formula for $\partial_s\varphi_\gamma^*$ follows by differentiating the smooth pieces and taking the interval between the one-sided slopes at $s=\alpha$. At $s=\alpha(1+\gamma)$, the left and right derivatives both equal $\kappa$, so the subdifferential there is the singleton $\{\kappa\}$.

It remains to justify the vectorial subdifferential. For $z\neq0$, the stated formula follows from the chain rule for the convex radial map $z\mapsto\varphi_\gamma(x,\abs{z})$. At $z=0$, the subgradient condition is
\[
    \xi\cdot w
    \leq
    \varphi_\gamma(x,\abs{w})
    \qquad\text{for all }w\in\R^N.
\]
If $\abs{\xi}\leq\alpha$, then $\xi\cdot w\leq\alpha\abs{w}\leq\varphi_\gamma(x,\abs{w})$ for all $w$. Conversely, if $\abs{\xi}>\alpha$, choosing $w=\varepsilon\xi/\abs{\xi}$ with $0<\varepsilon\leq\kappa$ gives
\[
    \xi\cdot w
    =
    \varepsilon\abs{\xi}
    >
    \alpha\varepsilon
    =
    \varphi_\gamma(x,\varepsilon),
\]
so the subgradient inequality fails. Hence $\partial_z\Phi_\gamma(x,0)=\{\xi\in\R^N:\abs{\xi}\leq\alpha\}$.
\end{proof}

\begin{proposition}[optimality conditions]
    \label{P: optimality logarithmic}
    Let $u_\gamma\in X_\gamma$ be the primal minimizer and let $p_\gamma\in K_\gamma$ be a dual maximizer. Then
    \begin{align*}
        \operatorname{div}p_\gamma
        &=
        u_\gamma-f
        \qquad\qquad\qquad\  \text{in }L^2(\Omega),\\
        p_\gamma(x)
        &\in
        \partial_z\Phi_\gamma
        \left(x,\nabla u_\gamma(x)\right)
        \qquad\text{for a.e. }x\in\Omega,
    \end{align*}
    where
    \[
        \Phi_\gamma(x,z):=\varphi_\gamma(x,\abs{z}).
    \]
    Equivalently, if $\nabla u_\gamma(x)\neq0$, then
    \begin{align*}
        p_\gamma(x)
        &=
        a(x)
        \frac{\nabla u_\gamma(x)}
        {\abs{\nabla u_\gamma(x)}},\\
        a(x)&\in
        \partial_t\varphi_\gamma
        \left(x,\abs{\nabla u_\gamma(x)}\right),
        \label{D: explicit optimality phi}
        \intertext{pointwise a.e. in $\Omega$, and if $\nabla u_\gamma(x)=0$, then}
        \abs{p_\gamma(x)}&\leq \alpha.
    \end{align*}
\end{proposition}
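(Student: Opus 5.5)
The statement is a direct specialization of the abstract optimality conditions of \cref{s:3.3}, so the plan is to reduce to that result and then read off the pointwise description from \cref{L: logarithmic scalar subdifferentials}.

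First, by \cref{L: varphi gamma properties} the integrand $\varphi_\gamma$ satisfies all standing assumptions of \cref{s:3}: measurability in $x$, convexity, continuity and monotonicity in $t$, $\varphi_\gamma(\cdot,t)\in L^1(\Omega)$, the uniform $\Delta_2$-condition, and uniform superlinear growth. Hence $X_\gamma=X_{\varphi_\gamma}$ and $K_\gamma=K_{\varphi_\gamma}$. The qualification condition verified before \cref{L: dual problem for Orlicz} applies verbatim, because $\rho_{\varphi_\gamma}$ is finite on all of $L^{\varphi_\gamma}(\Omega;\R^N)$. Consequently a primal minimizer $u_\gamma$ exists by \cref{P: existence logarithmic}, and a dual maximizer $p_\gamma\in K_\gamma$ exists by \cref{L: dual problem logarithmic}. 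The Fenchel--Rockafellar characterization in \cref{T: Fenchel.Rockafeller} then shows that any such pair satisfies \eqref{D: optimality conditions}. The general optimality proposition of \cref{s:3.3} translates this into $\operatorname{div}p_\gamma=u_\gamma-f$ in $L^2(\Omega)$, with the weak normal boundary condition built into $K_\gamma$. It also gives $p_\gamma(x)\in\partial_z\Phi_\gamma(x,\nabla u_\gamma(x))$ a.e., obtained via \cref{C: subdiff representation}.

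Second, I will make these conditions explicit pointwise. Off a null set, \cref{L: logarithmic scalar subdifferentials} provides the vectorial subdifferential. When $\nabla u_\gamma(x)\neq0$, it reads $\partial_z\Phi_\gamma(x,z)=\partial_t\varphi_\gamma(x,\abs{z})\,z/\abs{z}$. So $p_\gamma(x)=a(x)\nabla u_\gamma(x)/\abs{\nabla u_\gamma(x)}$, where $a(x)$ is defined as $p_\gamma(x)\cdot\nabla u_\gamma(x)/\abs{\nabla u_\gamma(x)}$ and lies in $\partial_t\varphi_\gamma(x,\abs{\nabla u_\gamma(x)})$. Defining $a$ by this formula makes it measurable automatically. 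When $\nabla u_\gamma(x)=0$, the lemma gives $\partial_z\Phi_\gamma(x,0)=\{\abs{\xi}\leq\alpha\}$, which yields $\abs{p_\gamma(x)}\leq\alpha$. The converse direction of the equivalence is immediate, since these are exactly the two branches of the subdifferential.

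The only point needing care is that the null sets involved are handled consistently. These are the null set from \cref{C: subdiff representation} and the null set where $\kappa$ violates its bounds. Both are countable unions of null sets, so this is harmless. I therefore expect no genuine obstacle: the work is carried by earlier results, and the proof is essentially bookkeeping.
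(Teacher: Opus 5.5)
Your proposal is correct and follows the paper's own argument. You specialize the abstract optimality proposition of Section~3.3 to $\varphi=\varphi_\gamma$, which is justified by the properties lemma for $\varphi_\gamma$, and then read off the radial pointwise form, including $\abs{p_\gamma(x)}\leq\alpha$ where $\nabla u_\gamma(x)=0$, from the explicit vectorial subdifferential in the scalar-subdifferential lemma. Your extra bookkeeping on the measurability of $a$ and on null sets is fine; the only slip is that existence of a dual maximizer is stated in the general Orlicz dual lemma rather than the logarithmic one, which is harmless because the statement assumes $p_\gamma$ is given.
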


\begin{proof}
This is \cref{P: optimality conditions Orlicz} applied to $\varphi=\varphi_\gamma$. The radial form follows from the chain rule for the convex radial integrand $\Phi_\gamma(x,z)=\varphi_\gamma(x,\abs{z})$ and \cref{L: logarithmic scalar subdifferentials}.
\end{proof}

\subsection{Primal-dual splitting structure}
\label{s:4.3}

We now isolate the continuous composite structure of the scaled logarithmic model. Throughout this subsection, $\gamma>0$ and the scale function $\kappa$ are fixed. The model can be written as
\[
    \min_u
    \frac12\norm{u-f}_{L^2(\Omega)}^2
    +
    \int_\Omega \varphi_\gamma(x,\abs{\nabla u})\,dx.
\]
With
\[
    F(u):=\frac12\norm{u-f}_{L^2(\Omega)}^2,
    \qquad
    G(v):=\int_\Omega \varphi_\gamma(x,\abs{v})\,dx,
    \qquad
    A:=\nabla,
\]
this is the composite convex problem
\[
    \min_u F(u)+G(Au)
    =
    \min_u F(u)+G(\nabla u).
\]
This is exactly the form used in the Fenchel-duality framework of \cref{s:2.2} and in the primal-dual splitting framework recalled in \cite[Chapter~7]{NAO:2020}.

The conjugate of the integral term is, formally and consistently with \cref{L: dual problem logarithmic},
\[
    G^*(p)
    =
    \int_\Omega \varphi_\gamma^*(x,\abs{p(x)})\,dx.
\]
The corresponding saddle formulation is
\[
    \inf_u\sup_p
    \left[
    \frac12\norm{u-f}_{L^2(\Omega)}^2
    +
    \int_\Omega p\cdot\nabla u\,dx
    -
    \int_\Omega \varphi_\gamma^*(x,\abs{p})\,dx
    \right].
\]
When integrating by parts, the homogeneous normal boundary condition is understood in the weak sense encoded in the admissible dual class $K_\gamma$.

Recall that the Fenchel extremality conditions for $F+G\circ A$ are
\[
    -\nabla^*p\in\partial F(u),
    \qquad
    p\in\partial G(\nabla u).
\]
Equivalently, using $\nabla^*=-\operatorname{div}$ and the Fenchel relation $p\in\partial G(\nabla u)$ if and only if $\nabla u\in\partial G^*(p)$, they read
\[
    \operatorname{div}p=u-f,
    \qquad
    p(x)\in\partial_z\Phi_\gamma(x,\nabla u(x))
    \quad\text{for a.e. }x\in\Omega.
\]
The formal proximal fixed-point form is obtained from the standard resolvent identity. For any $\tau>0$ and $\sigma>0$,
\begin{align*}
    -\nabla^*p\in\partial F(u)
    \quad&\Leftrightarrow\quad
    u=\operatorname{prox}_{\tau F}(u-\tau\nabla^*p),
    \shortintertext{and}
    \nabla u\in\partial G^*(p)
    \quad&\Leftrightarrow\quad
    p=\operatorname{prox}_{\sigma G^*}(p+\sigma\nabla u).
\end{align*}
Since $\nabla^*=-\operatorname{div}$, this becomes
\[
    u=\operatorname{prox}_{\tau F}(u+\tau\operatorname{div}p),
    \qquad
    p=\operatorname{prox}_{\sigma G^*}(p+\sigma\nabla u).
\]
The associated formal primal-dual iteration is
\begin{equation}
    \label{D: continuous formal pdhg}
    \left\{
    \begin{aligned}
        p^{k+1}
        &=
        \operatorname{prox}_{\sigma G^*}
        \left(p^k+\sigma\nabla\bar u^k\right),\\
        u^{k+1}
        &=
        \operatorname{prox}_{\tau F}
        \left(u^k+\tau\operatorname{div}p^{k+1}\right),\\
        \bar u^{k+1}
        &=
        u^{k+1}+\theta(u^{k+1}-u^k).
    \end{aligned}
    \right.
\end{equation}

The proximal map of the fidelity term is explicit. Indeed, for $w\in L^2(\Omega)$,
\[
    \operatorname{prox}_{\tau F}(w)
    =
    \frac{w+\tau f}{1+\tau}.
\]
The dual proximal map is radial and also has an explicit pointwise form.

\begin{lemma}[explicit pointwise dual proximal map]
    \label{L: continuous radial dual prox}
    Let $\sigma>0$. Formally, with respect to the $L^2(\Omega;\R^N)$-metric, the proximal map of $G^*$ is pointwise radial. For $q\in L^2(\Omega;\R^N)$,
    \[
        \operatorname{prox}_{\sigma G^*}(q)(x)
        =
        \begin{cases}
            0, & q(x)=0,\\[0.7em]
            \displaystyle
            \rho_\sigma(x,\abs{q(x)})
            \frac{q(x)}{\abs{q(x)}}, & q(x)\neq0,
        \end{cases}
    \]
    where, for $r\geq0$,
    \begin{equation}
        \rho_\sigma(x,r)
        =
        \begin{cases}
            r,
            & 0\leq r\leq\alpha,\\[0.5em]
            \alpha,
            & \alpha<r\leq\alpha+\sigma\kappa(x),\\[0.5em]
            r-\sigma\kappa(x),
            & \alpha+\sigma\kappa(x)<r\leq\alpha(1+\gamma)+\sigma\kappa(x),\\[0.5em]
            \displaystyle
            r-\alpha\gamma
            \operatorname{W}\!\left(
                \frac{\sigma\kappa(x)}{\alpha\gamma}
                \exp\!\left(
                    \frac{r/\alpha-1-\gamma}{\gamma}
                \right)
            \right),
            & r>\alpha(1+\gamma)+\sigma\kappa(x).
        \end{cases}
        \label{D: continuous dual radius phi plus}
    \end{equation}
    Here $\operatorname{W}$ denotes the principal branch of the Lambert $W$-function (see \cite{CorlessEtAl1996}).
\end{lemma}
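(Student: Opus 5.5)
The plan is to reduce the $L^2$-proximal problem to a family of scalar convex problems, one for each $x$. Formally,
\[
    \operatorname{prox}_{\sigma G^*}(q)=\operatorname*{arg\,min}_p\int_\Omega\Bigl(\tfrac{1}{2\sigma}\abs{p-q}^2+\Phi^*_\gamma(x,p)\Bigr)\,dx .
\]
Since $G^*(p)=\int_\Omega\Phi_\gamma^*(x,p)\,dx$ with $\Phi_\gamma^*(x,\xi)=\varphi_\gamma^*(x,\abs{\xi})$ by \cref{P: dual modular}, the integrand is a normal integrand and the problem decouples. The minimizer is therefore $p(x)=\operatorname{prox}_{\sigma\Phi_\gamma^*(x,\cdot)}(q(x))$ for a.e. $x$, and measurability of this selection is immediate from the explicit formula we obtain.

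The first step is radial reduction. Because $\Phi_\gamma^*(x,\cdot)$ depends only on $\abs{\xi}$ and $\varphi_\gamma^*(x,\cdot)$ is nondecreasing on $[0,\infty)$, the minimizer $p$ lies on the ray through $q$. To see this, replace $p$ by $\abs{p}q/\abs{q}$: this leaves the conjugate term unchanged and does not increase $\abs{p-q}$. If $q=0$, then $p=0$. Writing $p=\rho\,q/\abs{q}$ with $\rho\geq0$ and $r=\abs{q}$, it remains to solve
\[
    \min_{\rho\geq0}\ \tfrac{1}{2\sigma}(\rho-r)^2+\varphi_\gamma^*(x,\rho).
\]
This is the scalar problem characterized by the inclusion $r-\rho\in\sigma\,\partial_s\varphi_\gamma^*(x,\rho)$. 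The minimizer is unique by strong convexity, so it suffices to exhibit one solution in each regime of $r$.

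The second step is a case analysis using \eqref{D: explicit dual subgradient phi}.
\begin{itemize}
    \item If $r\leq\alpha$: $\rho=r$ works, since $0\in\partial_s\varphi_\gamma^*$ for $s<\alpha$, and also at $s=\alpha$.
    \item If $\alpha<r\leq\alpha+\sigma\kappa$: $\rho=\alpha$ works, since $(r-\alpha)/\sigma\in[0,\kappa]$.
    \item If $\alpha+\sigma\kappa<r\leq\alpha(1+\gamma)+\sigma\kappa$: $\rho=r-\sigma\kappa$ lies in $(\alpha,\alpha(1+\gamma)]$, where the subdifferential is $\{\kappa\}$.
    \item If $r>\alpha(1+\gamma)+\sigma\kappa$: $\rho$ must exceed $\alpha(1+\gamma)$ (otherwise $r-\rho>\sigma\kappa$, contradicting the previous ranges), so it solves
    \[
        r-\rho=\sigma\kappa\exp\!\left(\frac{\rho/\alpha-1-\gamma}{\gamma}\right).
    \]
\end{itemize}

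The main obstacle is solving the last equation explicitly. Set $w=(r-\rho)/(\alpha\gamma)$, so $\rho/\alpha=r/\alpha-\gamma w$. The equation becomes
\[
    w e^{w}=\frac{\sigma\kappa}{\alpha\gamma}\exp\!\left(\frac{r/\alpha-1-\gamma}{\gamma}\right).
\]
The right-hand side is positive, so the principal branch gives the unique positive solution $w=\operatorname{W}(\cdot)$, which yields $\rho=r-\alpha\gamma\operatorname{W}(\cdots)$. We must also check consistency, namely $\rho>\alpha(1+\gamma)$. This follows by monotonicity: at the threshold $r_0=\alpha(1+\gamma)+\sigma\kappa$ one has $\operatorname{W}=\sigma\kappa/(\alpha\gamma)$, because $\frac{\sigma\kappa}{\alpha\gamma}e^{\sigma\kappa/(\alpha\gamma)}$ is precisely the argument there. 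Hence $\rho(r_0)=\alpha(1+\gamma)$, which also shows continuity of $\rho_\sigma$ across that breakpoint. Moreover $r\mapsto r-\alpha\gamma\operatorname{W}(c\,e^{r/(\alpha\gamma)})$ is strictly increasing, since its derivative is $1-\operatorname{W}/(1+\operatorname{W})>0$. Uniqueness of the scalar minimizer then identifies $\rho$ with $\rho_\sigma(x,r)$ in all cases, completing the formula.
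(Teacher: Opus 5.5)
Your proposal is correct and takes the same route as the paper. Both reduce pointwise and radially to the scalar problem $\min_{\rho\geq0}\frac{1}{2\sigma}(\rho-r)^2+\varphi_\gamma^*(x,\rho)$, run a four-regime case analysis through $\partial_s\varphi_\gamma^*$, and substitute $r-\rho$ to reach the Lambert-$\operatorname{W}$ equation. You also make explicit several steps the paper only asserts: the replacement $p\mapsto\abs{p}q/\abs{q}$, uniqueness by strong convexity, the necessity argument forcing $\rho>\alpha(1+\gamma)$ in the last regime, and the threshold and monotonicity check giving continuity at $r=\alpha(1+\gamma)+\sigma\kappa$.
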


\begin{proof}
Since
\[
    G^*(p)
    =
    \int_\Omega \varphi_\gamma^*(x,\abs{p(x)})\,dx,
\]
the formal $L^2$-proximal problem is
\[
    \operatorname{prox}_{\sigma G^*}(q)
    =
    \operatorname*{arg\,min}_p
    \int_\Omega
    \left[
    \frac{1}{2\sigma}\abs{p(x)-q(x)}^2
    +
    \varphi_\gamma^*(x,\abs{p(x)})
    \right] \,dx.
\]
The integrand is pointwise and radial. Therefore the minimizer at each $x$ is parallel to $q(x)$. If $q(x)=0$, the minimizer is $0$. If $q(x)\neq0$, write
\[
    r:=\abs{q(x)}
\]
and search for the minimizer in the form
\[
    p(x)=\rho\frac{q(x)}{\abs{q(x)}}.
\]
Then $\rho\geq0$ minimizes
\[
    \min_{\rho\geq0}
    \frac{1}{2\sigma}(\rho-r)^2+\varphi_\gamma^*(x,\rho).
\]

Fix $x$ and write $\kappa=\kappa(x)$. Since $\rho\geq0$, \cref{L: logarithmic scalar subdifferentials} gives
\[
\varphi_\gamma^*(x,\rho)
=
\begin{cases}
0, & 0\leq \rho\leq \alpha,\\
\kappa(\rho-\alpha), & \alpha<\rho\leq\alpha(1+\gamma),\\
\alpha\gamma\kappa
\exp\!\left(\dfrac{\rho/\alpha-1-\gamma}{\gamma}\right),
& \rho>\alpha(1+\gamma).
\end{cases}
\]
We distinguish four regimes.

If $0\leq\rho\leq\alpha$, then the scalar objective is
\[
    J(\rho)=\frac{1}{2\sigma}(\rho-r)^2.
\]
The minimizer is $\rho=r$, which is admissible exactly when $0\leq r\leq\alpha$. Hence $\rho_\sigma(x,r)=r$ in the first case.

At the boundary $\rho=\alpha$, the scalar optimality condition is
\[
    0\in\frac{1}{\sigma}(\rho-r)+\partial_\rho\varphi_\gamma^*(x,\rho).
\]
Since $\partial_\rho\varphi_\gamma^*(x,\alpha)=[0,\kappa]$, the point $\rho=\alpha$ is optimal if and only if
\[
    0\in \frac{1}{\sigma}(\alpha-r)+[0,\kappa],
\]
or equivalently
\[
    \frac{r-\alpha}{\sigma}\in[0,\kappa].
\]
Thus $\alpha\leq r\leq\alpha+\sigma\kappa$. The endpoint $r=\alpha$ is already covered by the first regime, so the second case is $\rho_\sigma(x,r)=\alpha$ for $\alpha<r\leq\alpha+\sigma\kappa$.

On the linear branch $\alpha<\rho\leq\alpha(1+\gamma)$, one has
\[
    \varphi_\gamma^*(x,\rho)=\kappa(\rho-\alpha),
    \qquad
    \frac{d}{d\rho}\varphi_\gamma^*(x,\rho)=\kappa.
\]
The optimality condition is
\[
    0=\frac{1}{\sigma}(\rho-r)+\kappa,
\]
so $\rho=r-\sigma\kappa$. This belongs to the linear branch precisely when
\[
    \alpha<r-\sigma\kappa\leq\alpha(1+\gamma),
\]
equivalently
\[
    \alpha+\sigma\kappa<r\leq\alpha(1+\gamma)+\sigma\kappa.
\]
This gives the third case.

On the exponential branch $\rho>\alpha(1+\gamma)$,
\[
    \varphi_\gamma^*(x,\rho)
    =
    \alpha\gamma\kappa
    \exp\!\left(\frac{\rho/\alpha-1-\gamma}{\gamma}\right),
\]
and
\[
    \frac{d}{d\rho}\varphi_\gamma^*(x,\rho)
    =
    \kappa
    \exp\!\left(\frac{\rho/\alpha-1-\gamma}{\gamma}\right).
\]
The scalar optimality condition becomes
\[
    0
    =
    \frac{1}{\sigma}(\rho-r)
    +
    \kappa
    \exp\!\left(\frac{\rho/\alpha-1-\gamma}{\gamma}\right),
\]
or
\[
    r-\rho
    =
    \sigma\kappa
    \exp\!\left(\frac{\rho/\alpha-1-\gamma}{\gamma}\right).
\]
Set $\delta:=r-\rho$. Then
\[
    \delta
    =
    \sigma\kappa
    \exp\!\left(
    \frac{(r-\delta)/\alpha-1-\gamma}{\gamma}
    \right)
    =
    \sigma\kappa
    \exp\!\left(
    \frac{r/\alpha-1-\gamma}{\gamma}
    \right)
    \exp\!\left(-\frac{\delta}{\alpha\gamma}\right).
\]
Hence
\[
    \delta
    \exp\!\left(\frac{\delta}{\alpha\gamma}\right)
    =
    \sigma\kappa
    \exp\!\left(
    \frac{r/\alpha-1-\gamma}{\gamma}
    \right).
\]
Dividing by $\alpha\gamma$ gives
\[
    \frac{\delta}{\alpha\gamma}
    \exp\!\left(\frac{\delta}{\alpha\gamma}\right)
    =
    \frac{\sigma\kappa}{\alpha\gamma}
    \exp\!\left(
    \frac{r/\alpha-1-\gamma}{\gamma}
    \right).
\]
By the defining identity $\operatorname{W}(z)e^{\operatorname{W}(z)}=z$,
\[
    \frac{\delta}{\alpha\gamma}
    =
    \operatorname{W}\!\left(
    \frac{\sigma\kappa}{\alpha\gamma}
    \exp\!\left(
    \frac{r/\alpha-1-\gamma}{\gamma}
    \right)
    \right).
\]
Therefore
\[
    \rho
    =
    r-\delta
    =
    r-\alpha\gamma
    \operatorname{W}\!\left(
    \frac{\sigma\kappa}{\alpha\gamma}
    \exp\!\left(
    \frac{r/\alpha-1-\gamma}{\gamma}
    \right)
    \right).
\]
This branch applies exactly when $r>\alpha(1+\gamma)+\sigma\kappa$. Putting the four regimes together proves the formula.
\end{proof}

\begin{figure}[t]
    \centering
    \definecolor{proxblue}{RGB}{0,114,178}
\definecolor{proorange}{RGB}{213,94,0}

\begin{tikzpicture}
\begin{axis}[
    width=0.80\textwidth,
    height=0.50\textwidth,
    xmin=0,
    xmax=5,
    ymin=0,
    ymax=2.15,
    xlabel={$r=\abs{q}$},
    ylabel={$\rho_{\sigma,\gamma}(r)$},
    axis lines*=left,
    tick align=inside,
    xtick={0,1,2,3,4,5},
    ytick={0,0.5,1,1.5,2},
    font=\small,
    legend cell align=left,
    legend style={
        draw=none,
        at={(0.98,0.03)},
        anchor=south east,
        fill=none,
    },
    clip=false,
]

\addplot[
    proxblue,
    solid,
    thick,
]
table[
    x=r,
    y=rho_gamma_01,
    col sep=comma,
] {figures/dual_prox_gamma_comparison.csv};
\addlegendentry{$\gamma=0.1$}

\addplot[
    proorange,
    dashed,
    thick,
]
table[
    x=r,
    y=rho_gamma_05,
    col sep=comma,
] {figures/dual_prox_gamma_comparison.csv};
\addlegendentry{$\gamma=0.5$}

\addplot[
    black!75,
    densely dotted,
    thick,
]
table[
    x=r,
    y=rho_rof,
    col sep=comma,
] {figures/dual_prox_gamma_comparison.csv};
\addlegendentry{ROF limit $(\gamma=0)$}

\addplot[
    gray!70,
    densely dotted,
    thin,
    forget plot,
]
coordinates {(1,0) (1,2.15)};

\addplot[
    gray!70,
    densely dotted,
    thin,
    forget plot,
]
coordinates {(2,0) (2,2.15)};

\node[
    anchor=south west,
    text=gray!70!black,
]
at (axis cs:1.04,0.04)
{$r=\alpha$};

\node[
    anchor=south west,
    text=gray!70!black,
]
at (axis cs:2.04,0.04)
{$r=\alpha+\sigma\kappa$};

\addplot[
    only marks,
    mark=o,
    mark size=2pt,
    proxblue,
    mark options={
        fill=white,
        draw=proxblue,
    },
    forget plot,
]
coordinates {(2.1,1.1)};

\node[
    anchor=south east,
    text=proxblue,
]
at (axis cs:2.0,1.15)
{$r=2.1$};

\addplot[
    only marks,
    mark=o,
    mark size=2pt,
    proorange,
    mark options={
        fill=white,
        draw=proorange,
    },
    forget plot,
]
coordinates {(2.5,1.5)};

\node[
    anchor=south east,
    text=proorange,
]
at (axis cs:2.55,1.55)
{$r=2.5$};

\end{axis}
\end{tikzpicture}
    \caption{
        Radial profile \(r\mapsto\rho_{\sigma,\gamma}(r)\) of the
        pointwise dual proximal map for
        \(\alpha=\kappa=\sigma=1\) and
        \(\gamma\in\{0.1,0.5\}\).
        The dotted black curve is the limiting ROF projection
        \(\rho_{\sigma,0}(r)=\min\{r,\alpha\}\).
        The vertical dotted lines indicate the common transition points
        \(r=\alpha\) and \(r=\alpha+\sigma\kappa\), while the marked
        points indicate the transition from the linear branch to the
        nonlinear Lambert-\(W\) branch.
    }
    \label{fig:dual-prox-gamma}
\end{figure}

The scaled logarithmic model therefore retains the same primal--dual
splitting architecture as ROF denoising. The primal proximal map is the
explicit averaging step associated with the quadratic fidelity term,
whereas the dual proximal map remains pointwise and radial. More
precisely, Lemma~4.8 shows that, for $q\neq 0$, it can be written in
the form
\[
    \operatorname{prox}_{\sigma G_\gamma^*}(q)
    =
    \rho_{\sigma,\gamma}(\abs{q})
    \frac{q}{\abs{q}},
\]
with the natural value $0$ at $q=0$. In contrast to the ROF case,
the dual update is no longer the projection onto the ball
$\{\abs{p}\leq\alpha\}$. Instead, its radial profile
$\rho_{\sigma,\gamma}$ consists of an identity branch, a constant
sticking branch, a linear transition branch, and a nonlinear
Lambert-$W$ branch.

\Cref{fig:dual-prox-gamma} illustrates these four regimes for
$\alpha=\kappa=\sigma=1$. For $0\leq r\leq\alpha$, the proximal map
acts as the identity, while for
$\alpha<r\leq\alpha+\sigma\kappa$ its radial profile is constant and
coincides with the ROF projection onto the dual ball. The scaled
logarithmic model therefore agrees exactly with the ROF dual update
throughout these first two regimes. The difference appears only for
$r>\alpha+\sigma\kappa$, where the profile leaves the constraint
boundary and first follows the linear branch $\rho_{\sigma,\gamma}(r)=r-\sigma\kappa$.
This branch terminates at the $\gamma$-dependent transition point $r=\alpha(1+\gamma)+\sigma\kappa$,
after which the nonlinear Lambert-$W$ expression from Lemma~4.8
applies.

The length of the linear transition interval is $\alpha\gamma$, and
hence increases with $\gamma$. Accordingly, larger values of
$\gamma$ delay the onset of the nonlinear branch and lead to a less
pronounced saturation of the dual variable for large input magnitudes.
Conversely, as $\gamma\downarrow0$, the transition interval collapses
and the radial profile converges to the hard ROF saturation $\rho_{\sigma,0}(r)=\min\{r,\alpha\}$.
One can observe both the explicit piecewise structure of the
dual proximal map and its limiting relation to the ROF projection.

\subsection{\texorpdfstring{$\Gamma$}{Gamma}-convergence as \texorpdfstring{$\gamma\to0$}{gamma to 0}}
\label{s:4.gamma}

We extend $\mathcal{F}_\gamma$ to $L^1(\Omega)$ by setting
\begin{equation}
    \mathcal{F}_\gamma(u):=
    \begin{cases}
        \displaystyle
        \frac12\norm{u-f}_{L^2(\Omega)}^2
        +\int_\Omega\varphi_\gamma(x,\abs{\nabla u})\,dx,
        & u\in X_\gamma,\\[1ex]
        +\infty,
        & u\in L^1(\Omega)\setminus X_\gamma.
    \end{cases}
\end{equation}
The limiting functional is the ROF functional
\begin{equation}
    \mathcal{F}_0(u):=
    \begin{cases}
        \displaystyle
        \frac12\norm{u-f}_{L^2(\Omega)}^2
        +\alpha\abs{Du}(\Omega),
        & u\in BV(\Omega)\cap L^2(\Omega),\\[1ex]
        +\infty,
        & u\in L^1(\Omega)\setminus\left(BV(\Omega)\cap L^2(\Omega)\right).
    \end{cases}
    \label{D: Gamma limit F0}
\end{equation}

\begin{proposition}[compactness]
    \label{P: compactness Gamma convergence}
    Let $\gamma_n\downarrow0$ and let $\{u_n\}_n\subset L^1(\Omega)$ satisfy
    \[
        \sup_n
        \mathcal{F}_{\gamma_n}(u_n)<\infty.
    \]
    Then $\{u_n\}_n$ is bounded in $L^2(\Omega)$ and in $BV(\Omega)$. In particular, there exist a subsequence, not relabelled, and $u\in BV(\Omega)\cap L^2(\Omega)$ such that
    \begin{align*}
        u_n&\to u
        \qquad\text{strongly in }L^1(\Omega),
        \shortintertext{and}
        u_n&\rightharpoonup u
        \qquad\text{weakly in }L^2(\Omega).
    \end{align*}
\end{proposition}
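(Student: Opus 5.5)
The plan is to extract uniform bounds directly from the energy bound $C:=\sup_n\mathcal F_{\gamma_n}(u_n)<\infty$ and then apply the standard $BV$ compactness theorem together with the $L^2$ weak compactness.

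First, finiteness of $\mathcal F_{\gamma_n}(u_n)$ forces $u_n\in X_{\gamma_n}\subset L^2(\Omega)\cap W^{1,1}(\Omega)$ for every $n$. The fidelity term gives $\norm{u_n-f}_{L^2(\Omega)}\leq\sqrt{2C}$. Hence $\norm{u_n}_{L^2(\Omega)}\leq\norm{f}_{L^2(\Omega)}+\sqrt{2C}$, and since $\Omega$ is bounded, Cauchy--Schwarz gives a uniform bound on $\norm{u_n}_{L^1(\Omega)}$.

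Second, for the gradient term I would use the elementary lower bound $\varphi_{\gamma}(x,t)\geq\alpha t$, valid for every $\gamma>0$ because $\log^+\geq0$ (this is noted in the proof of \cref{L: varphi gamma properties}). This bound is independent of $\gamma$, which is exactly what is needed, since the superlinear constants of \cref{L: compactness} degenerate as $\gamma\downarrow0$. Thus
\[
    \abs{Du_n}(\Omega)=\int_\Omega\abs{\nabla u_n}\,dx\leq\frac1\alpha\int_\Omega\varphi_{\gamma_n}(x,\abs{\nabla u_n})\,dx\leq\frac C\alpha,
\]
using $Du_n=\nabla u_n\,\mathcal L^N$ for $u_n\in W^{1,1}(\Omega)$. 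So $\{u_n\}_n$ is bounded in $BV(\Omega)$.

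Third, I would invoke the compactness theorem in $BV$ on bounded Lipschitz domains \cite[Chapter~3]{ambrosio2000functions}. It yields a subsequence with $u_n\to u$ strongly in $L^1(\Omega)$ for some $u\in BV(\Omega)$, with $\abs{Du}(\Omega)\leq\liminf_n\abs{Du_n}(\Omega)$ by lower semicontinuity of the total variation under $L^1$ convergence. By the $L^2$ bound, a further subsequence converges weakly in $L^2(\Omega)$ to some $\tilde u$. Testing against $\eta\in C_c^\infty(\Omega)$ identifies $\tilde u=u$, exactly as in \cref{L: compactness}, so $u\in BV(\Omega)\cap L^2(\Omega)$ and both convergences hold along the same subsequence.

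The only real point of care is the second step. Weak $L^1$ compactness of the gradients is lost in the limit, since uniform integrability fails as $\gamma_n\to0$, so one must work in $BV$ rather than $W^{1,1}$. The $\gamma$-uniform linear lower bound is what makes the total variation bound possible.
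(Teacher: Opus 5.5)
Your proposal is correct and follows essentially the same route as the paper: the $L^2$ bound comes from the fidelity term, the $\gamma$-independent lower bound $\varphi_{\gamma}(x,t)\geq\alpha t$ controls $\abs{Du_n}(\Omega)$, compactness of $BV(\Omega)$ in $L^1(\Omega)$ gives strong convergence, and the strong $L^1$ and weak $L^2$ limits are then identified. Your extra remarks make explicit two points the paper leaves implicit: that $u_n\in W^{1,1}(\Omega)$, so $Du_n=\nabla u_n\,\mathcal L^N$, and that the $\gamma$-dependent superlinearity used in \cref{L: compactness} is not available uniformly here.
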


\begin{proof}
The energy bound yields
\[
    \sup_n \norm{u_n-f}_{L^2(\Omega)}<\infty,
\]
and hence $\{u_n\}_n$ is bounded in $L^2(\Omega)$. Since
\[
    \varphi_{\gamma_n}(x,t)\geq \alpha t
    \qquad\text{for a.e. }x\in\Omega,
    \quad t\geq0,
\]
we also have
\[
    \alpha\int_\Omega \abs{\nabla u_n}\,dx
    \leq
    \int_\Omega
    \varphi_{\gamma_n}(x,\abs{\nabla u_n})
    \,dx
    \leq C.
\]
Together with $\norm{u_n}_{L^1(\Omega)}\leq\abs{\Omega}^{1/2}\norm{u_n}_{L^2(\Omega)}$, this gives boundedness in $BV(\Omega)$. By compactness of bounded sets in $BV(\Omega)$ with respect to strong $L^1(\Omega)$-convergence, the compact embedding $BV(\Omega)\hookrightarrow L^1(\Omega)$ yields, after passing to a subsequence, strong convergence in $L^1(\Omega)$; see \cite[Chapter~3]{ambrosio2000functions}. The $L^2$-boundedness yields a further subsequence converging weakly in $L^2(\Omega)$. The $L^1$-limit and the weak $L^2$-limit coincide distributionally.
\end{proof}

\begin{theorem}[$\Gamma$-convergence to total variation]
    \label{T: Gamma convergence phi gamma}
    As $\gamma\downarrow0$, the functionals $\mathcal{F}_\gamma:L^1(\Omega)\to[0,\infty]$ $\Gamma$-converge to $\mathcal{F}_0$ with respect to strong convergence in $L^1(\Omega)$. Moreover, the family is equicoercive in $L^1(\Omega)$: if $\gamma_n\downarrow0$ and $\sup_n\mathcal{F}_{\gamma_n}(u_n)<\infty$, then $(u_n)_n$ is precompact in $L^1(\Omega)$, and every $L^1$-cluster point belongs to $BV(\Omega)\cap L^2(\Omega)$.
\end{theorem}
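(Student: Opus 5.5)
Equicoercivity is exactly \cref{P: compactness Gamma convergence}: any sequence with $\sup_n\mathcal F_{\gamma_n}(u_n)<\infty$ is bounded in $L^2(\Omega)$ and $BV(\Omega)$, hence precompact in $L^1(\Omega)$. Every $L^1$-cluster point lies in $BV(\Omega)\cap L^2(\Omega)$ by lower semicontinuity of the total variation under $L^1$-convergence and weak closedness of $L^2$-balls. So the work is the liminf and limsup inequalities along an arbitrary sequence $\gamma_n\downarrow0$.

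\emph{Liminf inequality.} Let $u_n\to u$ in $L^1(\Omega)$ and assume without loss of generality that $\liminf_n\mathcal F_{\gamma_n}(u_n)<\infty$. Pass to a subsequence realizing the liminf with bounded energies. By \cref{P: compactness Gamma convergence} we get $u\in BV(\Omega)\cap L^2(\Omega)$ and $u_n\rightharpoonup u$ weakly in $L^2(\Omega)$, so the fidelity term is weakly lower semicontinuous. For the regularizer use the pointwise bound $\varphi_{\gamma_n}(x,t)\ge\alpha t$, which gives
\[
\int_\Omega\varphi_{\gamma_n}(x,\abs{\nabla u_n})\,dx\ge\alpha\abs{Du_n}(\Omega).
\]
Lower semicontinuity of the total variation under $L^1$-convergence \cite[Chapter~3]{ambrosio2000functions} then yields $\alpha\abs{Du}(\Omega)\le\liminf_n\alpha\abs{Du_n}(\Omega)$. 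Adding the two liminfs (superadditivity of liminf) gives $\mathcal F_0(u)\le\liminf_n\mathcal F_{\gamma_n}(u_n)$.

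\emph{Limsup inequality (recovery sequence).} This is the main obstacle, since $u\in BV$ may have a jump part while $\mathcal F_\gamma$ is finite only on $W^{1,1}$ functions with $L\log L$ gradients. The plan is a diagonal argument in two stages.

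First, use strict approximation. For $u\in BV(\Omega)\cap L^2(\Omega)$, the Anzellotti--Giaquinta / Meyers--Serrin type theorem on Lipschitz domains \cite[Theorem~3.9]{ambrosio2000functions} provides $w_k\in C^\infty(\Omega)\cap W^{1,1}(\Omega)$ with $w_k\to u$ in $L^1$ and $\abs{Dw_k}(\Omega)\to\abs{Du}(\Omega)$. The mollification construction also gives $w_k\to u$ in $L^2(\Omega)$, since $u\in L^2$. To ensure finite $\varphi_\gamma$-energy I need $\nabla w_k\in L^\infty$. The smooth approximants may have gradients blowing up near $\partial\Omega$, so I would instead use a Lipschitz extension of $u$ across $\partial\Omega$ (possible since $\Omega$ is Lipschitz, with $\abs{Du}(\partial\Omega)=0$ for the extension) followed by global mollification. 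This gives $w_k\in W^{1,\infty}(\Omega)\subset X_\gamma$ for every $\gamma$, and thus
\[
\limsup_k\mathcal F_0(w_k)\le\mathcal F_0(u).
\]

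Second, for fixed $k$ with $\norm{\nabla w_k}_\infty\le R_k$, the uniform convergence estimate from \cref{L: varphi gamma properties}(vi) gives
\[
\int_\Omega\varphi_\gamma(x,\abs{\nabla w_k})\,dx\le\alpha\int_\Omega\abs{\nabla w_k}\,dx+\alpha\gamma R_k\log^+\frac{R_k}{\kappa_-}\abs{\Omega},
\]
so $\mathcal F_{\gamma_n}(w_k)\to\mathcal F_0(w_k)$ as $n\to\infty$. Choose $k(n)\to\infty$ slowly enough that $\gamma_nR_{k(n)}\log^+(R_{k(n)}/\kappa_-)\to0$, and set $u_n:=w_{k(n)}$. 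Then $u_n\to u$ in $L^1(\Omega)$ and $\limsup_n\mathcal F_{\gamma_n}(u_n)\le\mathcal F_0(u)$. The same conclusion also follows more abstractly from the diagonalization lemma for $\Gamma$-limsups. If $u\notin BV\cap L^2$, the limsup inequality is trivial.
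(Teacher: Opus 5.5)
Your proposal is correct and follows the paper's proof essentially step for step. Equicoercivity and the liminf inequality come from \cref{P: compactness Gamma convergence}, the bound $\varphi_{\gamma_n}(x,t)\geq\alpha t$, and lower semicontinuity of the total variation; the limsup inequality comes from strict approximation by $W^{1,\infty}$ functions, the uniform estimate $0\leq\varphi_\gamma(x,t)-\alpha t\leq\alpha\gamma R\log^+(R/\kappa_-)$ for $t\leq R$, and a diagonal choice $k(n)$. The one difference is that you justify the $W^{1,\infty}(\Omega)$ approximants more carefully than the paper, which cites the interior strict approximation theorem plus truncation even though interior mollification alone does not bound gradients up to $\partial\Omega$; you instead extend $u$ across the Lipschitz boundary with $\abs{D\bar u}(\partial\Omega)=0$ and mollify globally, and you should take a reflection-type extension so that $L^2$ integrability is preserved, since this is what gives the $L^2$ convergence needed for the fidelity term.
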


\begin{proof}
We first prove the liminf inequality. Let $\gamma_n\downarrow0$ and suppose that $u_n\to u$ strongly in $L^1(\Omega)$. If
\[
    \liminf_{n\to\infty}
    \mathcal{F}_{\gamma_n}(u_n)
    =+\infty,
\]
there is nothing to prove. Otherwise, we pass to a subsequence realizing the liminf and with bounded energy. By \cref{P: compactness Gamma convergence}, the limit satisfies $u\in BV(\Omega)\cap L^2(\Omega)$, and, after passing to a further subsequence,
\[
    u_n\rightharpoonup u
    \qquad\text{weakly in }L^2(\Omega).
\]
Hence
\[
    \frac12\norm{u-f}_{L^2(\Omega)}^2
    \leq
    \liminf_{n\to\infty}
    \frac12\norm{u_n-f}_{L^2(\Omega)}^2.
\]
Moreover, $\varphi_{\gamma_n}(x,t)\geq\alpha t$, and the lower semicontinuity of total variation with respect to strong $L^1$-convergence gives
\[
    \alpha\abs{Du}(\Omega)
    \leq
    \liminf_{n\to\infty}
    \alpha\int_\Omega \abs{\nabla u_n}\,dx
    \leq
    \liminf_{n\to\infty}
    \int_\Omega
    \varphi_{\gamma_n}(x,\abs{\nabla u_n})
    \,dx;
\]
see \cite[Chapter~3]{ambrosio2000functions}. Combining the two estimates gives
\[
    \mathcal{F}_0(u)
    \leq
    \liminf_{n\to\infty}
    \mathcal{F}_{\gamma_n}(u_n).
\]

We now prove the limsup inequality. Let $u\in L^1(\Omega)$. If $\mathcal{F}_0(u)=+\infty$, any sequence converging to $u$ in $L^1(\Omega)$ is a recovery sequence. We therefore assume $u\in BV(\Omega)\cap L^2(\Omega)$. By the strict approximation theorem for $BV$-functions on bounded Lipschitz domains, see \cite[Theorem~3.9]{ambrosio2000functions}, and after the usual truncation argument if necessary, we may choose
\[
    v_k\in W^{1,\infty}(\Omega)\cap L^2(\Omega)
\]
such that
\[
    v_k\to u\quad\text{in }L^1(\Omega),
    \qquad
    v_k\to u\quad\text{in }L^2(\Omega),
\]
and
\[
    \int_\Omega\abs{\nabla v_k}\,dx\to\abs{Du}(\Omega).
\]
For fixed $k$, we then have $\nabla v_k\in L^\infty(\Omega;\R^N)$. Since $\kappa(x)\geq\kappa_->0$,
\[
\begin{aligned}
    0
    &\leq
    \varphi_\gamma(x,\abs{\nabla v_k})
    -
    \alpha\abs{\nabla v_k}\\
    &=
    \alpha\gamma\abs{\nabla v_k}
    \log^+\left(
    \frac{\abs{\nabla v_k}}{\kappa(x)}
    \right),
\end{aligned}
\]
and hence
\[
    0
    \leq
    \varphi_\gamma(x,\abs{\nabla v_k})
    -
    \alpha\abs{\nabla v_k}
    \leq
    \alpha\gamma
    \norm{\nabla v_k}_{L^\infty(\Omega)}
    \log^+\left(
    \frac{\norm{\nabla v_k}_{L^\infty(\Omega)}}{\kappa_-}
    \right).
\]
The right-hand side is a constant depending on $k$, multiplied by $\gamma$. Therefore
\[
    \lim_{\gamma\downarrow0}
    \int_\Omega
    \varphi_\gamma(x,\abs{\nabla v_k})
    \,dx
    =
    \alpha\int_\Omega \abs{\nabla v_k}\,dx.
\]
Consequently,
\[
    \lim_{k\to\infty}
    \lim_{\gamma\downarrow0}
    \mathcal{F}_\gamma(v_k)
    =
    \mathcal{F}_0(u).
\]
A standard diagonal argument for recovery sequences, see \cite[Chapter~1]{dal2012introduction}, gives $u_\gamma=v_{k(\gamma)}$ such that $u_\gamma\to u$ strongly in $L^1(\Omega)$ and
\[
    \limsup_{\gamma\downarrow0}
    \mathcal{F}_\gamma(u_\gamma)
    \leq
    \mathcal{F}_0(u).
\]
This proves the $\Gamma$-convergence.
\end{proof}

\begin{remark}[convergence of minimizers]
The equicoercivity statement in \cref{T: Gamma convergence phi gamma} is the compactness property needed to pass from $\Gamma$-convergence to convergence of minimizers. Let $u_\gamma$ be the unique minimizer of $\mathcal{F}_\gamma$. Since
\[
    \mathcal{F}_\gamma(u_\gamma)
    \leq
    \mathcal{F}_\gamma(0)
    =
    \frac12\norm{f}_{L^2(\Omega)}^2,
\]
the family $(u_\gamma)_\gamma$ has $L^1$-convergent subsequences as $\gamma\downarrow0$. By the fundamental theorem of $\Gamma$-convergence, every cluster point is a minimizer of $\mathcal{F}_0$. Since $\mathcal{F}_0$ is strictly convex because of the quadratic fidelity term, its minimizer is unique. Therefore the whole family $u_\gamma$ converges in $L^1(\Omega)$ to the unique minimizer of $\mathcal{F}_0$. The same argument applies to asymptotic minimizers satisfying
\[
    \mathcal{F}_\gamma(u_\gamma)
    \leq
    \inf \mathcal{F}_\gamma+o(1).
\]
\end{remark}

\begin{remark}[role of the scale function]
The scale function $\kappa$ affects the finite-$\gamma$ behavior of the regularizer by determining where the logarithmic correction becomes active. It does not appear in the $\gamma\downarrow0$ limit, since $\varphi_\gamma(x,t)\to\alpha t$ locally uniformly in $t$, uniformly in $x$. Thus $\kappa$ should be interpreted as a local gradient scale for the relaxed model, not as a weight in the limiting total variation. A weighted-TV limit would instead require an integrand whose leading term is $w(x)t$, producing a limit of the form $\alpha\int_\Omega w\,d\abs{Du}$.
\end{remark}

\section{Numerical illustrations}
\label{s:5}

We conclude with two numerical illustrations of the finite-$\gamma$
behavior of the scaled logarithmic regularizer. The purpose is to
demonstrate the effect of the logarithmic correction in a controlled
example and to compare it with standard denoising models.

\subsection{Discretization and experimental setup}
\label{s:5.1}

All images consist of $n\times n$ pixels with $n=256$. We denote by
$D$ the unscaled forward-difference gradient, with outward differences
set to zero at the image boundary, and use the Euclidean norm on the
two gradient components. The discrete Orlicz problem is
\begin{equation}
    \min_{u\in\mathbb R^{n\times n}}
    \frac12\sum_{i,j}(u_{ij}-f_{ij})^2
    +
    \alpha\sum_{i,j}\abs{(Du)_{ij}}
    \left(
        1+\gamma\log^+
        \frac{\abs{(Du)_{ij}}}{\kappa}
    \right).
    \label{E: discrete Orlicz model}
\end{equation}
We compare \eqref{E: discrete Orlicz model} with the discrete ROF model,
obtained by setting $\gamma=0$, and with quadratic $H^1$ regularization,
whose regularization term is
\[
    \frac{\alpha}{2}\sum_{i,j}\abs{(Du)_{ij}}^2.
\]

The ROF and Orlicz problems are solved by the primal--dual algorithm of
\cite{ChambollePock2011}. In both cases we use extrapolation parameter
$\theta=1$, primal and dual step sizes
$\tau=\sigma=0.99/\sqrt{8}$, and $5000$ iterations. Thus the two
implementations differ only in the pointwise dual proximal map: the ROF
projection is replaced by the radial map derived in
\cref{s:4.3}. The quadratic $H^1$ problem is solved by conjugate
gradients.

Although the model permits a spatially varying scale, we use a scalar
$\kappa$ in all experiments. It is estimated from the noisy datum, without access to the ground
truth, by
\begin{equation}
    \kappa
    =
    \frac12
    Q_{0.95}\!\left(
        \left(
            \abs{D(G_1*f)}_{ij}
        \right)_{(i,j):\,\abs{D(G_1*f)}_{ij}>0}
    \right),
    \label{E: numerical kappa}
\end{equation}
where $G_1$ denotes Gaussian smoothing with standard deviation one
pixel and $Q_{0.95}$ denotes the empirical $95$th percentile of the
positive pixelwise gradient magnitudes. Since the clean reference is known in both examples, the remaining parameters are chosen independently for each method by minimizing the mean squared error over finite parameter grids. This oracle choice is used to compare the attainable reconstructions and is not intended as a practical parameter-selection rule.

The source code, notebooks, and numerical outputs are available
in the archived software release \cite{Unterberger2026CUOrlicz}.

\subsection{Blurred-disk experiment}
\label{s:5.2}

On the square $[-1,1]^2$, consider the radially symmetric phantom
\[
    u_\eta(x)
    =
    \frac{1}{
        1+\exp\!\left(
            \dfrac{\abs{x}-0.35}{\eta}
        \right)
    },
    \qquad
    f_\eta=u_\eta+0.2\,\xi,
\]
where $\xi$ consists of independent standard Gaussian random variables
with zero mean and standard deviation $0.2$.
The same noise realization is used for every transition width $\eta$.
The scale estimates from \eqref{E: numerical kappa} range from
$0.04695$ to $0.05037$.

For each value of $\eta$, we performed an oracle parameter sweep over
finite search grids. For the ROF and quadratic $H^1$ models, we selected
the value of $\alpha$ attaining the lowest MSE, while for the Orlicz
model we jointly selected the pair $(\alpha,\gamma)$ attaining the
lowest MSE. Only these minimizing parameter choices are reported in
\cref{tab:blurred-disk}; the corresponding reconstructions are those
displayed in \cref{fig:blurred-disk-comparison}.

\pgfplotstableread[
    col sep=comma
]{tables/blurred_disk_metrics.csv}\blurreddiskmetrics

\newcommand{\bdnumber}[3][]{%
    \pgfplotstablegetelem{#2}{#3}\of\blurreddiskmetrics%
    \pgfmathprintnumber[#1]{\pgfplotsretval}%
}

\newcommand{\bdeta}[1]{%
    \bdnumber[fixed,precision=3,zerofill]{#1}{eta}%
}

\newcommand{\bdalpha}[1]{%
    \bdnumber[fixed,precision=2]{#1}{alpha}%
}

\newcommand{\bdgamma}[1]{%
    \pgfplotstablegetelem{#1}{gamma}\of\blurreddiskmetrics%
    \edef\bdgammavalue{\pgfplotsretval}%
    \pgfmathtruncatemacro{\bdgammaprecision}{%
        \bdgammavalue < 0.01 ? 3 : 2%
    }%
    \pgfmathprintnumber[
        fixed,
        precision=\bdgammaprecision,
        zerofill
    ]{\bdgammavalue}%
}

\newcommand{\bdmse}[1]{%
    \pgfplotstablegetelem{#1}{MSE}\of\blurreddiskmetrics%
    \pgfmathprintnumber[
        fixed,
        precision=4,
        zerofill
    ]{%
        \fpeval{10000*(\pgfplotsretval)}%
    }%
}

\newcommand{\bdrow}[3]{%
    $\bdeta{#1}$
    & $\bdalpha{#1}$ & $\bdmse{#1}$
    & $(\bdalpha{#2},\bdgamma{#2})$ & $\bdmse{#2}$
    & $\bdalpha{#3}$ & $\bdmse{#3}$%
}

\begin{table}[t]
    \centering
    \small
    \setlength{\tabcolsep}{4pt}
    \begin{tabular}{c cc cc cc}
        \toprule
        & \multicolumn{2}{c}{ROF}
        & \multicolumn{2}{c}{Orlicz}
        & \multicolumn{2}{c}{$H^1$} \\
        \cmidrule(lr){2-3}
        \cmidrule(lr){4-5}
        \cmidrule(lr){6-7}
        $\eta$
        & $\alpha$ & $10^4\mathrm{MSE}$
        & $(\alpha,\gamma)$ & $10^4\mathrm{MSE}$
        & $\alpha$ & $10^4\mathrm{MSE}$ \\
        \midrule
        \bdrow{0}{1}{2}   \\
        \bdrow{3}{4}{5}   \\
        \bdrow{6}{7}{8}   \\
        \bdrow{9}{10}{11} \\
        \bdrow{12}{13}{14}\\
        \bottomrule
    \end{tabular}
    \caption{
    MSE-selected parameters and corresponding mean squared errors for
    the blurred-disk experiment.
    }
    \label{tab:blurred-disk}
\end{table}


\begin{figure}[p]
    \centering
    \begin{subfigure}[t]{\textwidth}
        \centering
        \includegraphics[
            width=\linewidth
        ]{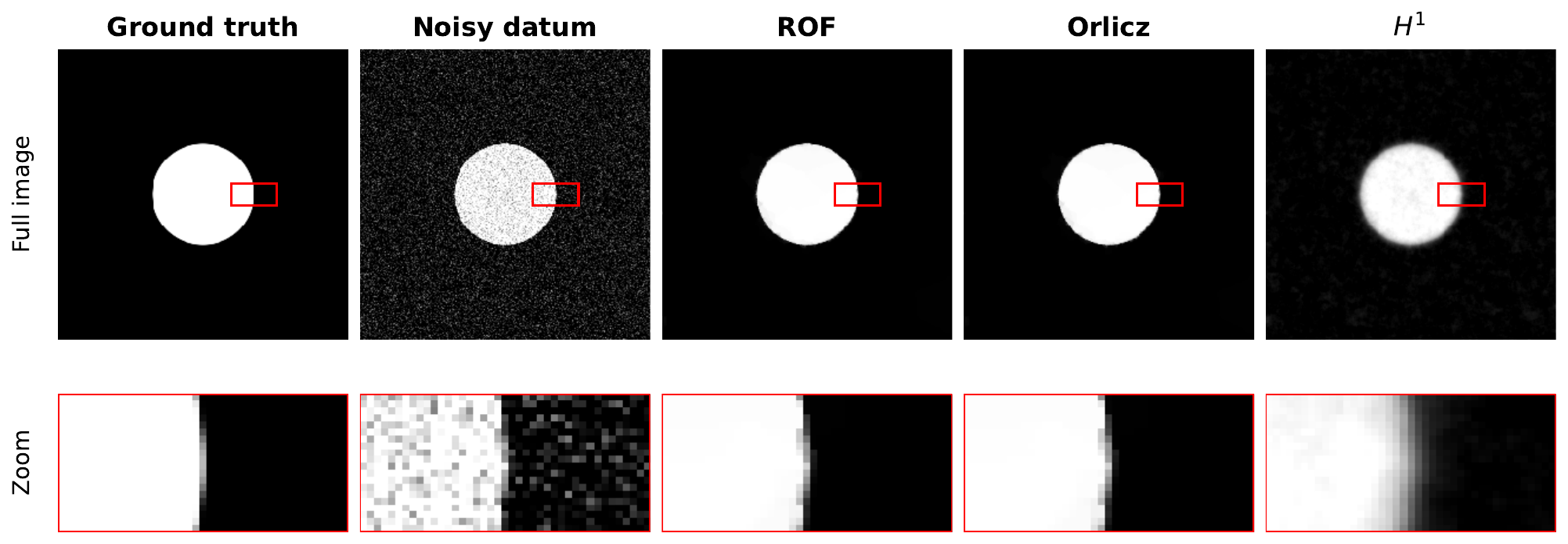}
        \caption{$\eta=0.001$.}
        \label{fig:blurred-disk-sharp}
    \end{subfigure}

    \vspace{0.6em}

    \begin{subfigure}[t]{\textwidth}
        \centering
        \includegraphics[
            width=\linewidth
        ]{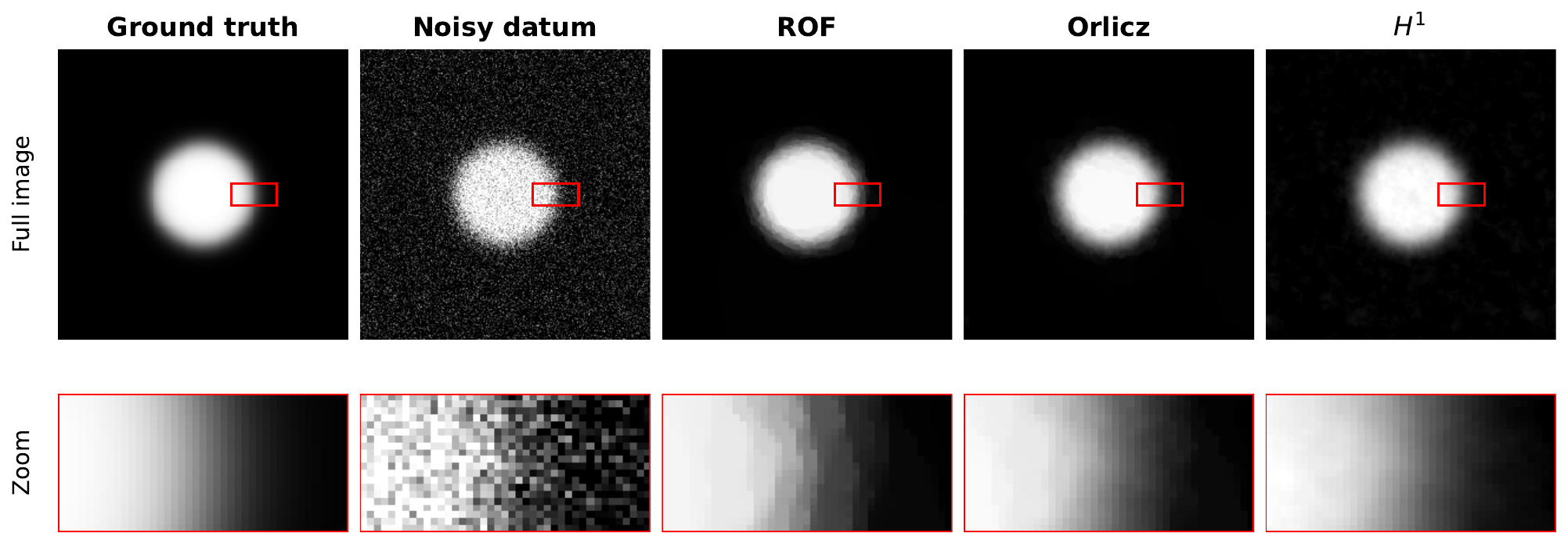}
        \caption{$\eta=0.035$.}
        \label{fig:blurred-disk-intermediate}
    \end{subfigure}

    \vspace{0.6em}

    \begin{subfigure}[t]{\textwidth}
        \centering
        \includegraphics[
            width=\linewidth
        ]{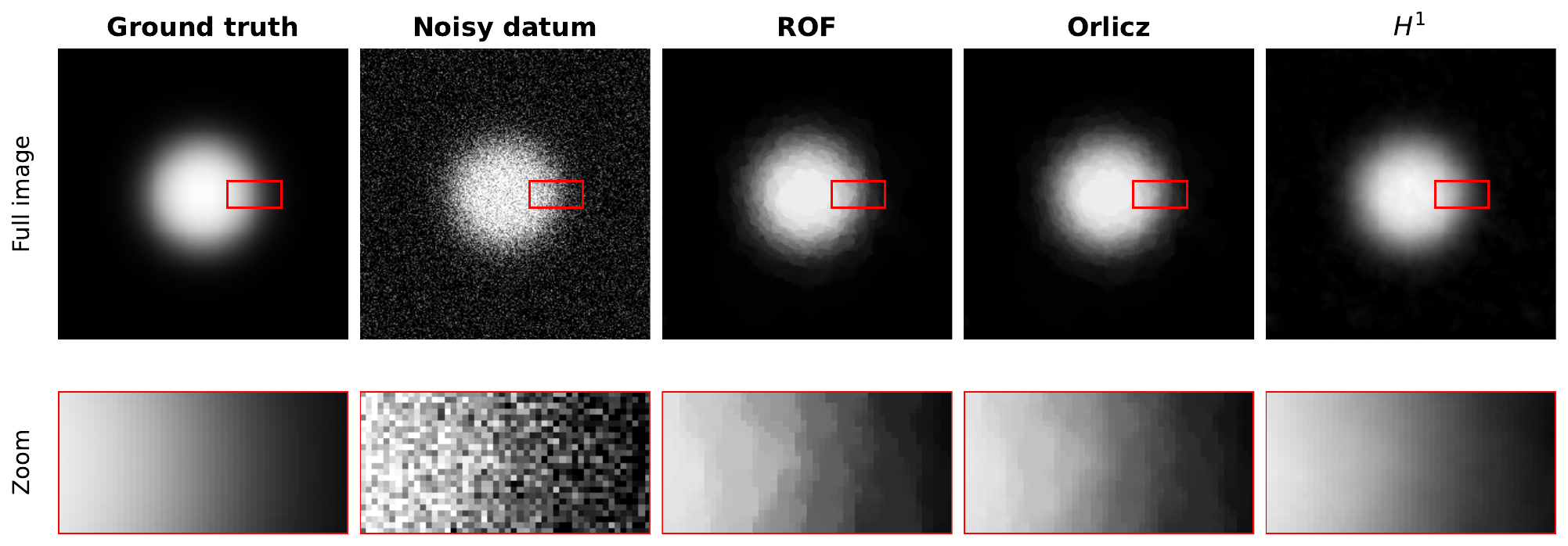}
        \caption{$\eta=0.075$.}
        \label{fig:blurred-disk-diffuse}
    \end{subfigure}

    \caption{
        MSE-selected reconstructions of blurred disks with increasing
        transition width. In each panel, the columns show the ground
        truth, noisy datum, ROF, Orlicz, and quadratic $H^1$
        reconstructions. The upper row contains the full images and the
        lower row enlarges the red transition region.
    }
    \label{fig:blurred-disk-comparison}
\end{figure}

For the nearly sharp interface $\eta=0.001$, the selected value
$\gamma=0.005$ is close to the ROF limit, and the two reconstructions in
\cref{fig:blurred-disk-sharp} are correspondingly very similar. As the
transition becomes more diffuse, the oracle selection favours a stronger
logarithmic correction. For $\eta=0.035$ and $\eta=0.075$, ROF exhibits visible concentric
staircasing, whereas the Orlicz model produces a smoother transition and
attains a lower MSE. Taken together, these observations suggest that the
logarithmic correction can improve the reconstruction of genuinely diffuse
transitions, while yielding results comparable to ROF in the sharp-interface
regime.

\subsection{Natural-image comparison}
\label{s:5.3}

Finally, we corrupt the standard Cameraman image, normalized to
$[0,1]$, by additive Gaussian noise with standard deviation $0.12$.
\Cref{fig:cameraman-comparison} shows the oracle reconstructions.
ROF selects $\alpha=0.1$ and attains MSE
$1.491\cdot10^{-3}$ (PSNR $28.27$ dB), while the Orlicz model selects
$(\alpha,\gamma)=(0.1,0.01)$ with $\kappa=0.05425$ and attains MSE
$1.499\cdot10^{-3}$ (PSNR $28.24$ dB). The quadratic $H^1$ model,
with $\alpha=1$, attains MSE $2.586\cdot10^{-3}$ (PSNR $25.87$ dB).
The Orlicz and ROF reconstructions are therefore essentially tied on
this example; consistently, the selected value of $\gamma$ again places
the Orlicz model close to its ROF limit.

\begin{figure}[b]
    \centering
    \includegraphics[
        width=\textwidth
    ]{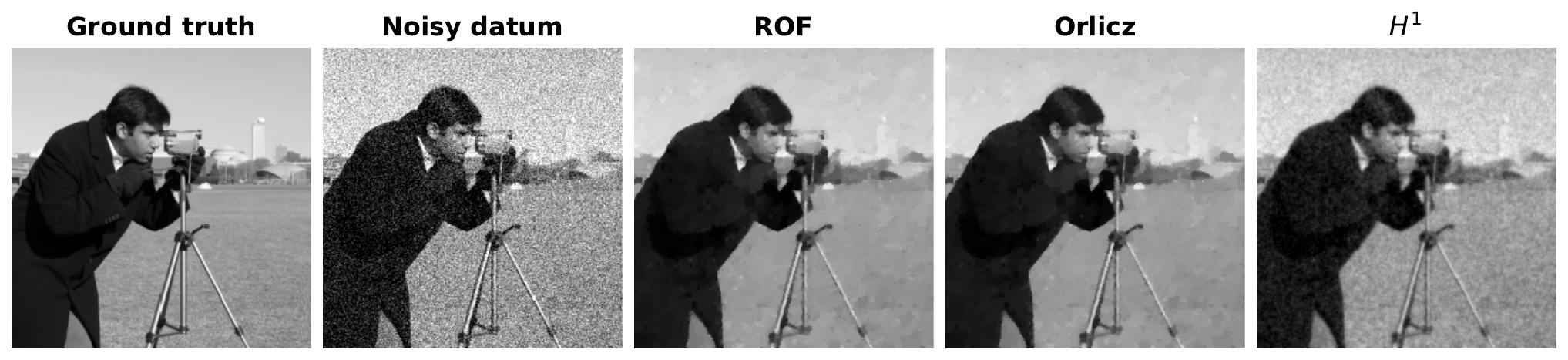}
    \caption{
        Cameraman denoising with additive Gaussian noise of standard
        deviation $0.12$. From left to right: ground truth, noisy datum,
        and the MSE-selected ROF, Orlicz, and quadratic $H^1$
        reconstructions.
        $[0,1]$.
    }
    \label{fig:cameraman-comparison}
\end{figure}

To illustrate the influence of the logarithmic correction independently
of the oracle parameter choice, we fix $\alpha=0.1$ and retain the same
noisy datum and estimated scale $\kappa \approx 0.05425$.
\Cref{fig:cameraman-gamma-comparison} compares the ROF reconstruction
with the Orlicz reconstructions for
$\gamma\in\{0.01,0.1,0.5,1\}$.
For $\gamma = 0.01$, the reconstruction is visually very close to ROF, consistent with the limiting behavior as $\gamma \downarrow 0$. Increasing $\gamma$ produces progressively stronger smoothing, with a visible loss of edge sharpness and fine detail relative to ROF.

\begin{figure}[tbp]
    \centering
    \includegraphics[width=\linewidth]
        {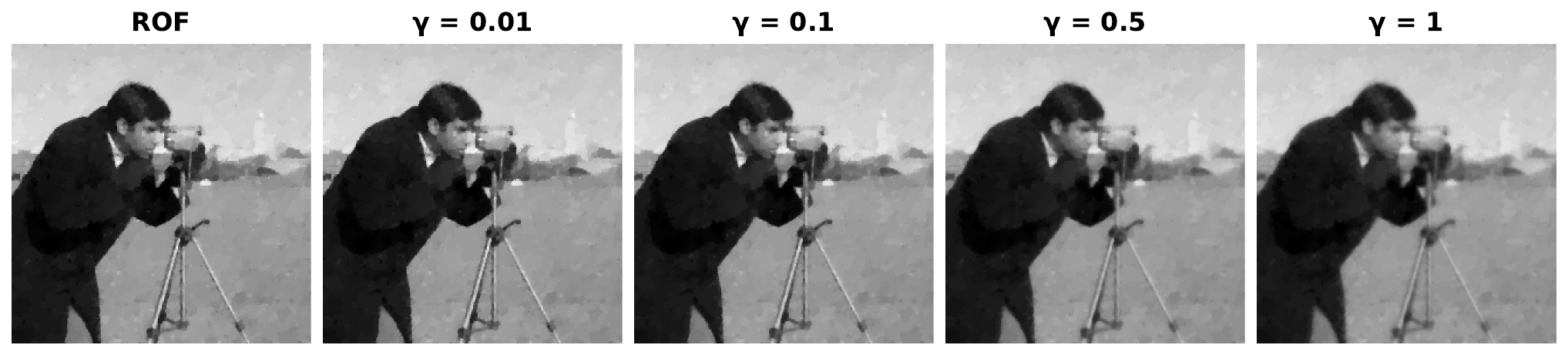}
    \caption{%
        Influence of $\gamma$ on Cameraman denoising at fixed
        $\alpha=0.1$ and $\kappa\approx0.05425$.
        From left to right: ROF and Orlicz reconstructions with
        $\gamma=0.01$, $0.1$, $0.5$, and $1$.
    }
    \label{fig:cameraman-gamma-comparison}
\end{figure}

To compare the stronger smoothing at $\gamma=1$ with quadratic
$H^1$ regularization, we numerically select the $H^1$ weight
that minimizes the mean squared difference to the fixed Orlicz
reconstruction, using a logarithmic parameter search followed
by local refinement. The clean reference image is not used
in this matching step.
\Cref{fig:cameraman-h1-comparison} shows similar overall
smoothing but distinct reconstruction characteristics.
In this example, the Orlicz reconstruction appears more
uniform in approximately homogeneous regions, such as the
sky, whereas the matched $H^1$ reconstruction retains more
visible small-scale intensity fluctuations. Differences
are also apparent around the boundaries of the cameraman.
The root mean squared difference between the reconstructions
is approximately $1.295\cdot10^{-2}$.

\begin{figure}[tbp]
    \centering
    \includegraphics[
        width=\textwidth
    ]{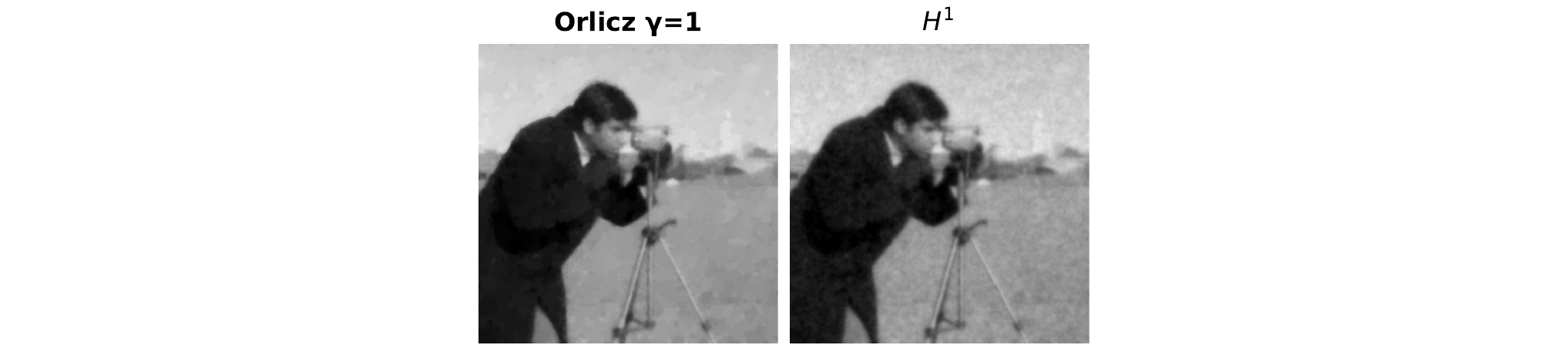}
    \caption{
        Cameraman denoising with Orlicz regularization at
        $\alpha=0.1$, $\gamma=1$, and $\kappa\approx0.05425$
        (left), and matched quadratic $H^1$ regularization
        at $\alpha\approx3.15435$ (right).
    }
    \label{fig:cameraman-h1-comparison}
\end{figure}


\section{Conclusion}

The scaled $L\log L$ model provides a variationally consistent
approximation of total variation denoising with additional control over
the penalization of large gradients. The Orlicz framework establishes
well-posedness and Fenchel duality despite the possible lack of
reflexivity, while the explicit dual proximal map makes the model
accessible to primal--dual splitting methods. The $\Gamma$-convergence
result and convergence of minimizers give a rigorous connection to ROF
as the logarithmic correction vanishes.

The numerical examples illustrate both the potential and the limitations
of this correction. It can reduce staircasing in diffuse transitions,
whereas stronger corrections at fixed $\alpha$ and $\kappa$ can smooth
out edges and fine detail in a natural image. The choice of the
regularization parameters is therefore central to exploiting the model's
flexibility.

The theoretical framework permits spatially dependent integrands beyond
the particular logarithmic density studied here. In particular, it
already accommodates a spatially varying scale $\kappa$, whose numerical
potential remains to be explored: local scale choices could adapt the
onset of the logarithmic correction to different image regions.
Further directions include extending the analysis and numerical
realization to inverse problems such as deblurring, and developing
parameter-choice rules that do not require access to a clean reference
image.


\section*{Declaration of AI assistance}

ChatGPT models GPT-5.4 and GPT-5.5 were used during the preparation of this manuscript as interactive assistants for drafting and revising text, improving the structure and exposition of mathematical arguments, suggesting alternative formulations and intermediate derivations. This assistance concerned in particular the introduction and conclusion, parts of the mathematical analysis in \cref{s:3,s:4}, and the numerical presentation and implementation in \cref{s:5}.
The mathematical ideas, formulation of the problem, main results, and underlying proof strategies were developed by the authors. In the mathematical sections, ChatGPT was used primarily to refine and streamline proofs, test alternative formulations of arguments, and improve their presentation. Suggestions generated by the model were checked against the relevant mathematical literature and independently verified, modified, or discarded by the authors as appropriate.

Codex was used for developing and debugging parts of the numerical code.

\bibliographystyle{jnsao}
\bibliography{references}

\end{document}